\theoremstyle{plain}
\newtheorem{proposition}{Proposition}
\newtheorem{theorem}[proposition]{Theorem}
\newtheorem{lemma}[proposition]{Lemma}
\newtheorem{corollary}[proposition]{Corollary}
\theoremstyle{definition}
\newtheorem{definition}[proposition]{Definition}
\theoremstyle{remark}
\newtheorem{remark}[proposition]{Remark}
\DeclareMathOperator{\tr}{tr}
\DeclareMathOperator{\supp}{supp}
\DeclareMathOperator{\C}{\mathcal{C}}
\DeclareMathOperator{\WW}{\mathcal{W}}
\DeclareMathOperator{\RR}{\mathbb{R}}
\DeclareMathOperator{\JJ}{\mathbb{J}}
\newcommand{\abs}[1]{\left|#1\right|}
\newcommand{\norm}[1]{\left\|#1\right\|}
\newcommand{\wt}[1]{\widetilde{#1}}
\newcommand{\ip}[1]{\left\langle #1\right\rangle}
\begin{document}

\title{Compact asymptotically harmonic manifolds}
\author{Andrew M. Zimmer}\address{Department of Mathematics, University of Michigan, Ann Arbor, MI 48109.}
\email{aazimmer@umich.edu}
\date{\today}
\keywords{asymptotically harmonic manifolds, geodesic flow, horospheres}

\begin{abstract}
A complete Riemannian manifold without conjugate points is called asymptotically harmonic if the mean curvature of its horospheres is a universal constant. Examples of asymptotically harmonic manifolds include flat spaces and rank one locally symmetric spaces of noncompact type. In this paper we show that this list exhausts the compact asymptotically harmonic manifolds under a variety of assumptions including nonpositive curvature or Gromov hyperbolic fundamental group. We then present a new characterization of symmetric spaces amongst the set of all visibility manifolds.
\end{abstract}

\maketitle

\section{Introduction}

A complete Riemannian manifold $X$ is called \emph{harmonic} if about any point the mean curvature of a geodesic sphere of sufficiently small radius is constant. Examples of harmonic manifolds include flat spaces and rank one locally symmetric spaces. In fact Szab\'o~\cite{szabo90} showed that any harmonic manifold with a conjugate point is a rank one locally symmetric space of compact type. If $X$ is a simply connected harmonic manifold without conjugate points, Szab\'o~\cite{szabo90} observed that $X$ is also ``globally'' harmonic: the mean curvature of any geodesic sphere is constant and this constant only depends on the radius of the sphere. One can then consider the so-called \emph{asymptotically harmonic manifolds}, these are the complete Riemannian manifolds without conjugate points such that the mean curvature of their horospheres is a universal constant. By Szab\'o's observation, every harmonic manifold without conjugate points is asymptotically harmonic and it is natural to ask if the class of asymptotically harmonic manifolds can be characterized. 

In a simply connected complete Riemannian manifold $X$ without conjugate points the horosphere $H_v$ based at a vector $v$ in the unit tangent bundle $SX$ is defined to be the zero set of the horofunction function
\begin{align*}
b_v(x) = \lim_{t\rightarrow \infty} d(\gamma_v(t),x)-t.
\end{align*}
Hence, a complete Riemannian manifold $M$ with universal Riemannian cover $X$ is called \emph{asymptotically harmonic} if there exists $\alpha \in \RR$ such that for all $v \in SX$ the horofunction $b_v$ is $C^2$ and $\Delta b_v \equiv \alpha$. The main purpose of this paper is to characterize these manifolds under a variety of additional assumptions. 

\begin{theorem}
\label{thm:main1}
Suppose $M$ is a compact asymptotically harmonic manifold with universal Riemannian cover $X$. If any of the following holds 
\begin{enumerate}
\item there exists a vector $v \in S_pX$ such that the endomorphism 
\begin{align*}
\nabla^2 b_v(p) + \nabla^2 b_{-v}(p):T_{p} X \rightarrow T_{p} X
\end{align*}
has corank one,
\item $M$ has no focal points or less generally $M$ has nonpositive curvature,
\item $X$ is Gromov hyperbolic,
\item $X$ has purely exponential volume growth: let $h_{vol}$ be the volume growth entropy of $X$ then for each $p \in X$ there exists a constant $C>0$ such that for all $R \geq 1$:
\begin{align*}
\frac{1}{C} e^{h_{vol}R} \leq \text{vol}_X(B_R(p)) \leq C e^{h_{vol}R},
\end{align*}
\end{enumerate}
then $M$ is either flat or a rank one locally symmetric space of noncompact type.
\end{theorem}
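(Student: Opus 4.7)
My plan is to show that under each hypothesis, every $v \in SX$ is \emph{extremal} in the sense that the stable Jacobi tensor $U^s_v := \nabla^2 b_v|_{v^\perp}$ equals the scalar operator $\frac{\alpha}{n-1}\mathrm{Id}$. The key identity is the trace-Riccati equation: from the Riccati equation $(U^s)' + (U^s)^2 + R_{\dot\gamma} = 0$ along geodesics and the hypothesis $\tr U^s_v \equiv \alpha$, one obtains
\[
\Ric(v,v) = -\tr\bigl((U^s_v)^2\bigr)
\]
for every unit $v$. By Cauchy--Schwarz, $\Ric(v,v) \leq -\alpha^2/(n-1)$ with equality precisely at extremal vectors. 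Extremality everywhere makes $M$ Einstein with this extremal Ricci curvature and its stable horospheres totally umbilic of constant principal curvature; a further argument (essentially integrating the Riccati equation at finite radii) then promotes asymptotic harmonicity to full harmonicity, at which point Szab\'o's theorem \cite{szabo90} identifies $M$ as flat if $\alpha = 0$ and as a rank-one locally symmetric space of noncompact type if $\alpha \neq 0$.

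It remains to reduce each of the four hypotheses to extremality of every $v$. For (1), both $\nabla^2 b_v$ and $\nabla^2 b_{-v}$ annihilate the tangential direction $v$, so corank-one on $T_pX$ says $\nabla^2 b_v(p) + \nabla^2 b_{-v}(p)$ is nondegenerate on $v^\perp$. This is the Ballmann rank-one condition on $v$, and combined with the trace identity it forces extremality at the single vector $v$. Propagation to all of $SM$ is then carried out by combining the algebraic invariance of scalar solutions to the Riccati equation with dynamical recurrence on the compact base. For (3), Gromov hyperbolicity of $X$ precludes flat strips, so the flat strip theorem gives nondegeneracy of $\nabla^2 b_v + \nabla^2 b_{-v}$ on $v^\perp$ for every $v$, reducing to (1). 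For (2), nonpositive curvature provides the sign relations $U^s_v \leq 0 \leq -\nabla^2 b_{-v}$ on $v^\perp$, and the trace identity integrated over the compact unit tangent bundle forces equality in Cauchy--Schwarz at some vector, again reducing to (1). For (4), Knieper's structure theory for manifolds of purely exponential volume growth produces a dense set of Ballmann rank-one directions, which translates to a dense set of corank-one vectors in the sense of (1), reducing to that case.

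The principal obstacle is the propagation step in case (1): extremality at a single vector must be pushed to extremality on all of $SM$. Although the Cauchy--Schwarz equality set is closed, it is not obviously flow-invariant, since preservation of a scalar $U^s$ under the Riccati flow requires the curvature operator $R_{\dot\gamma}$ to itself act as a scalar along the orbit — and this scalar-curvature condition on $R_{\dot\gamma}$ is essentially the rigidity one is trying to prove. The resolution must exploit further consequences of asymptotic harmonicity beyond the first-order trace identity, likely by differentiating further times and using topological transitivity of the geodesic flow on $SM$ to close a bootstrap. A secondary technical point is the final ``extremal everywhere $\implies$ harmonic'' step, which is an integration argument reconstructing geodesic sphere mean curvatures at finite radii from the horospherical data, needed to invoke Szab\'o's theorem cleanly.
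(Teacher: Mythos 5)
Your strategy fails at its very first step: the intermediate statement you aim for is false for the manifolds the theorem is supposed to produce. You want to show every $v$ is ``extremal,'' i.e.\ $\nabla^2 b_v|_{v^\perp}=\frac{\alpha}{n-1}\mathrm{Id}$, equivalently that all horospheres are totally umbilic. But compact quotients of complex (and quaternionic, octonionic) hyperbolic space are asymptotically harmonic and satisfy each of the hypotheses, yet their horospheres are \emph{not} umbilic: the stable Riccati solution has two distinct eigenvalues, so Cauchy--Schwarz is strict and $\Ric(v,v)<-\alpha^2/(n-1)$. Hence ``extremality everywhere'' cannot be deduced from the hypotheses, and the claimed local implications are also wrong: corank one of $\nabla^2 b_v(p)+\nabla^2 b_{-v}(p)$ (nondegeneracy of $B(v)+B(-v)$ on $v^\perp$) is a rank-one/Anosov-type condition and has no bearing on whether $U^s(v)$ is scalar, the flat strip theorem you invoke in case (3) is not available without a curvature or no-focal-points assumption, and integrating the trace identity over $SM$ in case (2) cannot force pointwise equality in Cauchy--Schwarz. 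Even granting extremality, the endgame is off: Szab\'o's theorem classifies harmonic manifolds \emph{with conjugate points} (compact type); the noncompact, compact-quotient case is precisely what requires the dynamical machinery.

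The paper's route is different and is the one that actually works: one shows the geodesic flow on $SM$ is Anosov and then applies the Foulon--Labourie, Benoist--Foulon--Labourie, Besson--Courtois--Gallot chain (Theorem~\ref{thm:AHanosov}). Concretely, $\det\bigl(B(-v)+B(v)\bigr)$ is continuous and flow-invariant (Lemma~\ref{lem:inv}); the sets $\{\det V\ge\epsilon\}$ are hyperbolic sets for the flow, and shadowing/local product structure makes $\det V$ locally constant where positive, hence nonvanishing on all of $SM$, which by Lemma~\ref{lem:rank_defn} and Eberlein's criterion (Theorem~\ref{thm:anosov}) gives Anosov. Case (2) is handled by Watkins' rank rigidity plus the product and higher-rank symmetric space analysis (Lemmas~\ref{lem:no_higher_rank}--\ref{lem:prod2}), yielding flatness in higher rank; case (4) uses Proposition~\ref{prop:asym_constant} ($\alpha=h_{vol}$, $B(-v)=U^u(v)$ a.e.) and a volume estimate showing that $\det(U^u-U^s)=0$ on a positive-measure set forces superexponential growth; case (3) reduces to (4) via Coornaert. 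If you wish to salvage your write-up, you must abandon the umbilicity/Einstein/Szab\'o scheme and argue toward the Anosov property instead.
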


\begin{remark}
If $M$ has nonpositive curvature, the condition in (1) is equivalent to $M$ having rank one in the sense of Ballmann, Brin, Eberlein~\cite{BBE85}.
\end{remark}

It is well known that if $M$ is a compact asymptotically harmonic manifold with negative sectional curvature, then $M$ is a rank one locally symmetric space. The proof of this fact is long and difficult. Foulon and Labourie~\cite{FL92} proved for any such manifold the stable and unstable foliations  of the geodesic flow are $C^\infty$. A deep rigidity result of Benoist, Foulon, and Labourie~\cite{BFL92} then implies that the geodesic flow on $SM$ is $C^\infty$ conjugate to the geodesic flow on a rank one symmetric space $N$. Finally Besson, Courtois, and Gallot's~\cite{BCG95} resolution of the minimum entropy conjecture implies that $M$ is isometric to the rank one symmetric space $N$. 

As observed by Knieper~\cite[Theorem 3.6]{knieper11}, these arguments actually show that any compact asymptotically harmonic manifold whose geodesic flow is Anosov is a rank one locally symmetric space. The main strategy in the proof of Theorem~\ref{thm:main1} is to show that the geodesic flow is Anosov and then apply the results mentioned above.

Theorem~\ref{thm:main1} should be compared to a recent result of Knieper concerning harmonic manifolds.

\begin{theorem}
\cite{knieper11}
Let $X$ be a non-compact complete harmonic manifold, if any of the following holds 
\begin{enumerate}
\item there exists a vector $v \in S_pX$ such that the endomorphism 
\begin{align*}
\nabla^2 b_v(p) + \nabla^2 b_{-v}(p):T_{p} X \rightarrow T_{p} X
\end{align*}
has corank one, 
\item $X$ has no focal points,
\item $X$ is Gromov hyperbolic,
\item $X$ has purely exponential volume growth,
\end{enumerate}
then either $X$ is flat or the geodesic flow on $SX$ is Anosov with respect to the Sasaki metric. In the latter case, if $X$ has a compact Riemannian quotient then $X$ is a rank one symmetric space of noncompact type.
\end{theorem}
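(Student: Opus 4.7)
The strategy follows the pattern sketched above in the discussion of Theorem~\ref{thm:main1}: under each of (1)--(4) I would show that the geodesic flow on $SX$ (with its Sasaki metric) is Anosov, and when $X$ admits a compact Riemannian quotient I would then invoke the rigidity chain of Foulon--Labourie, Benoist--Foulon--Labourie, and Besson--Courtois--Gallot to upgrade the Anosov conclusion to the statement that $X$ is a rank one symmetric space of noncompact type. The analytic input common to all four cases is that by Szab\'o's theorem every non-compact harmonic manifold is free of conjugate points and is asymptotically harmonic: the horofunctions $b_v$ are $C^2$, and the stable operator $U(v) := \nabla^2 b_v$ is a symmetric endomorphism of $T_pX$ with $v$ in its kernel, with constant trace $\alpha \geq 0$, and satisfying the Riccati equation $U(v)' + U(v)^2 + R(\cdot,v)v = 0$ along the geodesic generated by $v$. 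Because $X$ is harmonic, the Jacobi fields on $X$ are controlled by a single universal density function, which places rigid algebraic constraints on the eigenvalues of $U(v)$.

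\smallskip

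Case (1) is the heart of the argument. If $U(v)+U(-v)$ has corank one at some base point $p$, then using the Riccati equation together with the constancy of $\tr U(v)$, this corank-one condition propagates along the entire geodesic through $v$. Harmonicity then forces the nonzero eigenvalues of $U(v)$ to be uniformly bounded below by a positive constant determined by $\alpha$ and the density function; the degenerate subcase $\alpha=0$ collapses the Riccati equation and forces $R \equiv 0$ along every orbit, so $X$ is flat. In the nondegenerate case, the horospherical distributions $\{\ker db_v\}_{v\in SX}$ in $TSX$ supply the stable/unstable splitting that defines the Anosov property. Cases (2) and (3) then reduce to Case (1): the absence of focal points makes $U(v)$ positive semidefinite, and a de Rham/Eschenburg--Heintze style argument produces either a corank-one vector or a Euclidean de Rham factor (which harmonicity forces to be all of $X$); Gromov hyperbolicity in turn directly prohibits the totally geodesic flat strips that a higher-dimensional common kernel of $U(v)$ and $-U(-v)$ would create via parallel Jacobi fields.

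\smallskip

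For Case (4), I would exploit the identity $h_{vol} = \alpha$ valid for asymptotically harmonic manifolds and appeal to a Patterson--Sullivan type construction on $\partial X$: one builds a conformal density of dimension $\alpha$, quasi-invariant under the geodesic flow with an explicit Jacobian expressed via $U$, and combines the purely exponential growth assumption with a shadow lemma to rule out a persistent kernel of $U$ on any orbit, again reducing to Case (1). Once the Anosov dichotomy is in place, the compact-quotient clause follows from Knieper's observation on compact asymptotically harmonic manifolds recalled above. The principal obstacle I expect is Case (4): converting a global, coarse volume-growth hypothesis on $X$ into uniform infinitesimal positivity of $U(v)$ along every orbit, with no a priori sign on sectional curvature and no large-scale hyperbolicity hypothesis, is the delicate step; the Patterson--Sullivan machinery is the natural bridge but verifying that it is well-behaved in this generality is the nontrivial point.
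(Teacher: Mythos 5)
This theorem is not proved in the paper: it is quoted from Knieper's work~\cite{knieper11} and included only to motivate Theorem~\ref{thm:main1}, whose proof the paper \emph{does} give, for the analogous but distinct setting of compact asymptotically harmonic manifolds. So there is no ``paper's own proof'' to compare against directly; what follows measures your sketch against the paper's proof of Theorem~\ref{thm:main1} and against what Knieper's non-compact harmonic argument actually requires.

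The broad strategy of your sketch --- reduce to showing the geodesic flow is Anosov, then invoke Foulon--Labourie, Benoist--Foulon--Labourie and Besson--Courtois--Gallot in the compact-quotient case --- does match both the paper and Knieper. But in case (1) there is a genuine gap in how you globalize. Flow-invariance of the corank-one condition along a single geodesic is easy (the paper's analogue is that $v \mapsto \det(B(-v)+B(v))$ is invariant under $g^t$), but Eberlein's/Bolton's criterion demands $\det(U^u(v)-U^s(v)) \neq 0$ for \emph{every} $v\in SX$, with a uniform lower bound in the non-compact case. The paper gets this via a hyperbolic-set shadowing argument that leans crucially on compactness of $SM$, which is unavailable here. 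For genuinely harmonic $X$ the right mechanism is different and stronger: the density function $\det A_v(t)$ is a universal function of $t$, and $\det\JJ^u_v(t)=e^{\alpha t}$ since $\tr U^u\equiv\alpha$, so $\det(U^u(v)-U^s(v)) = \lim_t e^{\alpha t}/\det A_v(t)$ is literally \emph{constant} on $SX$. Your phrase about ``the nonzero eigenvalues of $U(v)$ being uniformly bounded below'' is not the correct invariant (the relevant spectral gap is that of $U^u-U^s$, not of $\nabla^2 b_v$ alone) and, as stated, would not deliver the uniformity Bolton needs without a curvature bound and a separate argument.

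The remaining cases also diverge from what the paper (and Knieper) actually do, though your ideas are not unreasonable. For (4) you propose Patterson--Sullivan densities and a shadow lemma; the paper's argument (and Knieper's) is a much more elementary direct volume estimate using the factorization $A_v(t)=\JJ^u_v(t)\bigl(U^u(v)-U^s_t(v)\bigr)^{-1}$: if $\det(U^u-U^s)$ vanishes on a set of positive measure in $S_pX$, monotone convergence forces $\Vol B_r(p)/e^{h_{vol}r}\to\infty$, contradicting purely exponential growth. For (3) the paper simply reduces Gromov hyperbolicity to (4) via Coornaert's theorem that cocompact Gromov hyperbolic spaces have purely exponential growth, rather than arguing directly against flat strips as you suggest. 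For (2) your de~Rham/Eschenburg--Heintze description is in the right spirit: the paper uses Watkins' rank rigidity theorem to reduce to a product or an irreducible symmetric space, and then shows a non-flat product or a higher-rank irreducible symmetric space cannot be asymptotically harmonic.
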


\subsection{Applications:} Theorem~\ref{thm:main1} could be viewed as an unsurprising extension of Knieper's work. However the harmonic condition is very strong and difficult to establish. The asymptotically harmonic condition is also strong, but can be established in some cases using results of Ledrappier. Delaying definitions, in Section 5 results of Ledrappier will be interpreted (and weakened) as:

\begin{theorem}\cite{Led10}
\label{thm:led_basic}
Suppose $M$ is a compact Riemannian manifold without conjugate points. Let $X$ be the universal Riemannian cover of $M$ with deck transformations $\Gamma=\pi_1(M) \subset \text{Isom}(X)$. If $4\lambda_{min}=h_{vol}^2>0$ then there exists a $\Gamma$-Patterson-Sullivan measure $\{ \nu_x : x \in X\}$ on the Busemann boundary $\partial \hat{X}$ such that for $\nu_x$-almost every $\xi$, $\xi \in C^\infty(X)$ and $\Delta \xi \equiv h_{vol}$.
\end{theorem}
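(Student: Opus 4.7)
The plan is to combine the classical Patterson--Sullivan construction on the Busemann boundary with the spectral extremality hypothesis $4\lambda_{min} = h_{vol}^2$ in order to force each horofunction in the support of $\nu_o$ to be smooth with constant Laplacian $h_{vol}$.

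First, I would build the Patterson--Sullivan family $\{\nu_x\}$ in the usual way. Since $M$ is compact, $h_{vol}$ is the critical exponent of the Poincar\'e series $P_s(x, y) = \sum_{\gamma \in \Gamma} e^{-s\, d(x, \gamma y)}$. Applying Patterson's construction (inserting a slowly varying weight if $P_s$ converges at $s = h_{vol}$) and passing to weak-$*$ limits as $s \downarrow h_{vol}$ of the atomic probability measures $P_s(o,o)^{-1} \sum_{\gamma \in \Gamma} e^{-s\, d(x, \gamma o)} \delta_{\gamma o}$ yields a $\Gamma$-equivariant family $\{\nu_x\}$ of probability measures on $\partial \hat{X}$; compactness of $M$ rules out escape of mass. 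A standard estimate gives the conformal rule $d\nu_x / d\nu_y(\xi) = e^{-h_{vol}(\xi(x) - \xi(y))}$, with each $\xi \in \partial \hat{X}$ identified with its normalized horofunction.

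Second, I would introduce the positive function
$$\varphi(x) = \int_{\partial \hat{X}} e^{-(h_{vol}/2)\, \xi(x)} \, d\nu_o(\xi).$$
Horofunctions on a no-conjugate-points manifold are $C^1$ with $\abs{\nabla \xi} \equiv 1$, so at any point where $\xi$ is $C^2$ one formally has
$$\Delta e^{-(h_{vol}/2)\, \xi} = \bigl(\tfrac{h_{vol}^2}{4} - \tfrac{h_{vol}}{2}\,\Delta \xi\bigr)\, e^{-(h_{vol}/2)\, \xi}.$$
The function $\varphi$ is $\Gamma$-automorphic with multiplier cocycle determined by the conformal rule, and a Cheng--Yau style maximum-principle argument, combined with the spectral hypothesis $\lambda_{min}(X) = h_{vol}^2/4$, identifies $\varphi$ as a positive $\lambda_{min}$-eigenfunction, $-\Delta \varphi = (h_{vol}^2/4)\, \varphi$. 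Integrating the above display against $\nu_o$ and matching with the eigenvalue equation for $\varphi$ then yields the pointwise-in-$x$ identity $\int_{\partial \hat{X}} \Delta \xi(x) \, d\nu_o(\xi) = h_{vol}$.

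The main obstacle will be the rigidity step: converting this integrated identity into the pointwise statement $\Delta \xi = h_{vol}$ for $\nu_o$-a.e.\ $\xi$, together with the $C^\infty$ upgrade of $\xi$. Following Ledrappier, I would write the deficit in the Rayleigh quotient for $\varphi$ relative to $\lambda_{min}(X)$ --- zero by extremality --- as a nonnegative quadratic form bounding the $\nu_o$-variance of $\Delta \xi$ from below. Vanishing of the deficit forces the variance to vanish, so $\Delta \xi = h_{vol}$ holds $\nu_o$-almost surely in the distributional sense. Finally, elliptic regularity applied to $\Delta \xi = h_{vol}$ bootstraps each such $\xi$ from $C^1$ to $C^\infty(X)$, giving the claim.
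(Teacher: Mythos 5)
Your proposal takes a genuinely different route from the paper: the paper essentially cites Ledrappier's \cite{Led10} construction of a harmonic measure on the laminated space $(X \times \hat{X})/\Gamma$, which produces $\nu$-a.e.\ a positive harmonic function $k_\xi$ with $\nabla \ln k_\xi = -h_{vol}\nabla\xi$ a.e., and the paper's own contribution is only (a)~ruling out mass of $\nu$ on the interior $X \subset \hat{X}$ and (b)~upgrading the a.e.\ gradient identity to an everywhere identity using the $C^1$-regularity and $\norm{\nabla\xi}=1$ of boundary horofunctions, whence $k_\xi = e^{-h_{vol}\xi}$. You instead try to rebuild Ledrappier's extremality argument from scratch via the positive function $\varphi(x) = \int e^{-(h_{vol}/2)\xi(x)}\,d\nu_o(\xi)$, a Cheng--Yau identification of $\varphi$ as a $\lambda_{min}$-eigenfunction, and a Rayleigh-quotient deficit bounding the variance of $\Delta\xi$. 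This is much closer to Ledrappier's original negative-curvature argument in \cite{led90}.

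The gap is the regularity step, and it is a real one in this generality. For a compact manifold merely without conjugate points, a Busemann function $\xi \in \partial\hat{X}$ is a priori only $C^1$ (Theorem~\ref{thm:buse_bd_basic}); the quantity $\Delta\xi$ does not yet exist as a locally integrable function whose $\nu_o$-variance you could try to bound. Your formal computation of $\Delta e^{-(h_{vol}/2)\xi}$ and the phrase ``at any point where $\xi$ is $C^2$'' concede this, but the subsequent deficit-equals-variance argument silently assumes $\Delta\xi$ is a well-defined measurable function of $(x,\xi)$ — exactly what you are trying to establish. Concluding ``elliptic regularity bootstraps $\xi$ from $C^1$ to $C^\infty$'' would be fine once $\Delta\xi = h_{vol}$ is known distributionally, but the argument as written never produces that distributional identity without circularity. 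The paper (following \cite{Led10}) avoids this entirely by building regularity in from the start: the densities $k_\xi$ come out positive and \emph{harmonic}, hence $C^\infty$ by elliptic regularity automatically, and only \emph{afterward} is $\xi$ recognized as $-h_{vol}^{-1}\ln k_\xi$, so that smoothness of $\xi$ and $\Delta\xi \equiv h_{vol}$ are inherited rather than bootstrapped. In strictly negative curvature, where Busemann functions are already $C^2$, your route would succeed and essentially reproves \cite[Theorem~1]{led90}; but the content of Theorem~\ref{thm:led_basic} is precisely the extension to the no-conjugate-points setting where that regularity is unavailable. A further, secondary issue is that the Rayleigh quotient of $\varphi$ must be formed on the noncompact $X$, where $\varphi \notin L^2$, so the deficit argument needs a $\Gamma$-twisted reformulation on $M$ that your sketch does not supply.
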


\begin{remark}
We delay definitions, but remark that for a manifold without conjugate points there is a fixed point $o \in X$ such that $\{ b_v-b_v(o) : v \in S X\} \subset \partial \hat{X}$. In particular, the conclusion of Theorem~\ref{thm:led_basic} implies that $M$ is asymptotically harmonic if each measure $\nu_x$ has full support.
\end{remark}

The parameter $h_{vol}$ is the volume growth entropy of $X$ and $\lambda_{min}$ is the bottom of the spectrum of the Laplace-Beltrami operator on $X$: 
\begin{align*}
\lambda_{min}(X) = \lambda_{min} = \inf\left\{\frac{\int_X \norm{\nabla f}^2 dx}{\int_X \abs{f}^2 dx} : f \in C^{\infty}_K(X)\right\}.
\end{align*}
Using smooth approximations of $e^{-sd(x,o)}$ as test functions for $2s > h_{vol}$, we see that 
\begin{align}
\label{eq:inq_vol_eigen}
4\lambda_{min} \leq h_{vol}^2.
\end{align}

In situations where one understands the Busemann boundary and the Patterson-Sullivan measures, Theorem~\ref{thm:main1} and Theorem~\ref{thm:led_basic} can be used to establish rigidity results. For example, Theorem~\ref{thm:main1} was used by Ledrappier and Shu in their proof of the following.

\begin{theorem}\cite[Theorem 1.1]{LS2012}\label{LSresult}
Suppose $M$ is a compact Riemannian manifold without focal points. Then the following are equivalent:
\begin{enumerate}
\item $M$ is a locally symmetric space,
\item $4\lambda_{min}=h_{vol}^2$,
\end{enumerate}
\end{theorem}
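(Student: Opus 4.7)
My plan is to tackle the two implications separately: $(1) \Rightarrow (2)$ by direct computation in each model, and $(2) \Rightarrow (1)$ by reducing to Theorem~\ref{thm:main1} through Theorem~\ref{thm:led_basic}.

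For $(1) \Rightarrow (2)$ the argument is essentially a catalogue. If $M$ is flat, then $h_{vol} = 0$ because balls in $\RR^n$ grow polynomially, and testing the Rayleigh quotient against a sequence of cutoffs of the constant function $1$ on $\RR^n$ gives $\lambda_{\min}(\RR^n) = 0$, so both sides of the claimed equation vanish. If $M$ is a rank one locally symmetric space of noncompact type, then $h_{vol}$ is read off from the volume of balls in the symmetric space (the sum of positive roots with multiplicities) and $\lambda_{\min}$ is computed from the Harish-Chandra $\Xi$-function; the classical identity $4\lambda_{\min} = h_{vol}^2$ follows.

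The substantive direction is $(2) \Rightarrow (1)$. Combining the assumption with the inequality $4\lambda_{\min} \leq h_{vol}^2$ recorded in the excerpt, either both quantities vanish or both are positive. If $h_{vol} = 0$, then the universal cover $X$ has subexponential volume growth, so $\pi_1(M)$ has subexponential word growth; by Gromov's polynomial growth theorem $\pi_1(M)$ is virtually nilpotent, and a short argument in the no-focal-points setting (flat strips plus Bieberbach-type conclusions) forces $M$ to be flat. Otherwise $h_{vol} > 0$, and I would invoke Theorem~\ref{thm:led_basic} to produce a $\Gamma$-Patterson-Sullivan measure $\{\nu_x\}$ on the Busemann boundary $\partial \hat X$ supported on smooth functions $\xi$ with $\Delta \xi \equiv h_{vol}$. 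Next I would verify that each $\nu_x$ has full support, so that by the remark following Theorem~\ref{thm:led_basic} every horofunction $b_v$ satisfies $\Delta b_v \equiv h_{vol}$; this is precisely the assertion that $M$ is asymptotically harmonic. Applying Theorem~\ref{thm:main1}(2) then identifies $M$ as either flat or a rank one locally symmetric space of noncompact type, and the standing assumption $h_{vol} > 0$ rules out the flat alternative.

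The main obstacle I anticipate is the full-support claim for $\nu_x$ on $\partial \hat X$, which amounts to a topological minimality statement for the $\Gamma$-action on the Busemann boundary. In nonpositive curvature this follows from a ping-pong argument applied to rank-one axial isometries; my plan is to transfer those arguments to the no-focal-points setting by using Jacobi-field monotonicity in place of convexity of the distance function and by exploiting the density of closed geodesics in $SM$ that compactness of $M$ guarantees. A secondary and expectedly much shorter difficulty is the $h_{vol}=0$ case: while folklore in nonpositive curvature, it needs an independent argument here, most naturally by chasing the virtually nilpotent fundamental group back through the no-focal-points geometry of $X$.
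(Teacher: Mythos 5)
There is a genuine gap, and it sits exactly where you anticipated difficulty. This theorem is not proved in the paper at all: it is quoted from Ledrappier--Shu, and the remark following it records their strategy, which begins by applying Watkins' rank rigidity theorem for manifolds without focal points to dispose of the higher rank and product cases \emph{before} any measure-theoretic argument, and only then, in the rank one case, develops a theory of harmonic measures to deduce asymptotic harmonicity and invoke Theorem~\ref{thm:main1}. Your plan skips that reduction and tries to prove, for an arbitrary compact $M$ without focal points satisfying $4\lambda_{min}=h_{vol}^2>0$, that the Patterson--Sullivan measures $\nu_x$ of Theorem~\ref{thm:led_basic} have full support, hence that $M$ is asymptotically harmonic. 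That intermediate claim is false: a compact quotient of an irreducible higher rank symmetric space of noncompact type satisfies $4\lambda_{min}=h_{vol}^2$ (there $\lambda_{min}=\norm{\rho}^2$ and $h_{vol}=2\norm{\rho}$), yet by Lemma~\ref{lem:no_higher_rank} it is not asymptotically harmonic; correspondingly the measure produced by Theorem~\ref{thm:led_basic} concentrates on the Busemann functions of the maximal volume-growth directions, a proper closed subset of $\partial\hat{X}$, so no minimality/full-support statement can hold. Your proposed ping-pong argument presupposes rank one axial isometries, i.e.\ it already presupposes the rank one case, which is precisely what the Watkins reduction is for. Note also that the endpoint of your chain, ``flat or rank one locally symmetric,'' is strictly stronger than statement (1), which these higher rank examples satisfy --- another sign the argument proves too much.

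Two secondary issues. In the $h_{vol}=0$ case, subexponential volume growth does not yield polynomial word growth, so Gromov's theorem does not apply as stated (there are groups of intermediate growth); the flatness of compact no-focal-points manifolds with $h_{vol}=0$ is again most naturally obtained through the rank rigidity splitting rather than through $\pi_1$. And in the direction $(1)\Rightarrow(2)$ your catalogue covers only the flat and rank one cases, whereas (1) allows higher rank and reducible locally symmetric spaces; the equality does hold for them (as above), but your list as written does not establish it.
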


\begin{remark}
Ledrappier and Shu also proved rigidity results involving other parameters related to random walks on the universal of cover of $M$ (namely, the linear drift and stochastic entropy). We briefly outline their strategy to prove Theorem~\ref{LSresult}: they begin by reducing to the ``rank one'' case using a recent rigidity result of Watkins~\cite{watkins11}. Next they develop a theory of harmonic measures for rank one manifolds without focal points. Using this theory and previous results of Ledrappier~\cite{Led10} they deduce that $M$ is asymptotically harmonic. Finally, Theorem~\ref{thm:main1} is used to show that $M$ is locally symmetric.
\end{remark}

This result can be seen as a generalization of an old result of Ledrappier. Before compact asymptotically harmonic manifolds of negative curvature were classfied as rank one locally symmetric spaces, Ledrappier~\cite[Theorem 1]{led90} provided a number of equivalent formulations of the definition of asymptotically harmonic manifolds in negative curvature. Using the classification, Ledrappier's work provides a characterization of symmetric spaces in negative curvature. 

\begin{theorem}\cite[Theorem 1]{led90}
Let $M$ be a compact Riemannian manifold with negative sectional curvature and let $X$ be the universal Riemannian cover of $M$. Then the following are equivalent:
\begin{enumerate}
\item $X$ is a rank one symmetric of noncompact type,
\item $M$ is asymptotically harmonic,
\item each Busemann function $b_v$ has constant Laplacian and $\Delta b_v \equiv h_{vol}$,
\item $4\lambda_{min}=h_{vol}^2$,
\item the Patterson-Sullivan and harmonic measures on $X(\infty)$ coincide.
\end{enumerate}
\end{theorem}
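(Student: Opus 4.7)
The plan is to split the theorem into two independent components: (a) the equivalence of $(2)$, $(3)$, $(4)$, and $(5)$, which is essentially due to Ledrappier in~\cite{led90}, and (b) the equivalence of $(1)$ with $(2)$, which is the only part requiring the new machinery of this paper. For (b), the implication $(1)\Rightarrow(2)$ is the standard observation that horospheres in a rank one symmetric space of noncompact type have constant mean curvature. The converse $(2)\Rightarrow(1)$ follows at once from Theorem~\ref{thm:main1}: a compact manifold of strictly negative sectional curvature has Gromov hyperbolic universal cover, so hypothesis (3) of Theorem~\ref{thm:main1} is satisfied, forcing $M$ to be either flat or a rank one locally symmetric space of noncompact type, and strict negativity rules out the flat case.

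For (a), I would close the cycle $(2)\Leftrightarrow(3)\Leftrightarrow(4)\Leftrightarrow(5)$. The step $(3)\Rightarrow(2)$ is trivial; for $(2)\Rightarrow(3)$, one identifies the universal constant $\alpha$ with $h_{vol}$ using Manning's theorem together with the fact that in negative curvature the topological entropy of the geodesic flow coincides with the exponential growth rate of horospherical volumes, which in turn equals $\alpha$. For $(3)\Rightarrow(4)$, the function $u_v = e^{-h_{vol} b_v/2}$ is a smooth positive eigenfunction of $-\Delta$ with eigenvalue $h_{vol}^2/4$ (use $\norm{\nabla b_v}\equiv 1$, $\Delta b_v \equiv h_{vol}$, and the identity $\Delta(e^f) = e^f(\Delta f + \norm{\nabla f}^2)$), so the generalized maximum principle of Brooks--Cheng--Yau yields $4\lambda_{min}\geq h_{vol}^2$; combined with inequality~(\ref{eq:inq_vol_eigen}) this gives $(4)$. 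The converse $(4)\Rightarrow(3)$ is where Theorem~\ref{thm:led_basic} enters: it produces a Patterson-Sullivan measure on the Busemann boundary that is generically supported on horofunctions with constant Laplacian $h_{vol}$, and in negative curvature the Busemann and geometric boundaries coincide while the Patterson-Sullivan class has full support (by Hopf--Tsuji--Sullivan ergodicity of the geodesic flow on the compact quotient), so every Busemann function satisfies $\Delta b_v\equiv h_{vol}$. The equivalence with (5) then follows from Ledrappier's Radon--Nikodym computation: the density of the harmonic measure with respect to the Patterson-Sullivan measure involves the quantity $\Delta b_v - h_{vol}$, so the two measures coincide precisely when $(3)$ holds.

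The main obstacle is the new implication $(2)\Rightarrow(1)$, which historically required the long chain through~\cite{FL92},~\cite{BFL92}, and~\cite{BCG95} to first deduce the Anosov property; Theorem~\ref{thm:main1} short-circuits this by extracting the Anosov property directly from the asymptotic harmonicity and Gromov hyperbolicity hypotheses. The secondary subtleties are the identification $\alpha = h_{vol}$ in $(2)\Rightarrow(3)$, which must be carried out without any circular appeal to $(3)$, and the full-support assertion for the Patterson-Sullivan measure in $(4)\Rightarrow(3)$; both are standard in the negatively curved setting.
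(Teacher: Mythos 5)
The paper does not give its own proof of this theorem: it quotes it from Ledrappier~\cite{led90} and explains the logical decomposition, namely that (2)$\Leftrightarrow$(3)$\Leftrightarrow$(4)$\Leftrightarrow$(5) is exactly Ledrappier's Theorem~1, while (2)$\Rightarrow$(1) is the subsequent classification of compact negatively curved asymptotically harmonic manifolds obtained by chaining Foulon--Labourie~\cite{FL92}, Benoist--Foulon--Labourie~\cite{BFL92}, and Besson--Courtois--Gallot~\cite{BCG95}. Your decomposition is precisely this, so as far as overall structure is concerned your proposal matches the paper's account.

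Two small observations. First, in deriving (2)$\Rightarrow$(1) you invoke Theorem~\ref{thm:main1} via the Gromov hyperbolic hypothesis, which is valid but a detour: on a compact negatively curved manifold the geodesic flow is Anosov automatically, so you can pass directly through Theorem~\ref{thm:AHanosov} without any of the hyperbolic-set machinery of Section~3.2 -- this is exactly what the paper describes as the classical route. Second, your sketch of the (5) equivalence is the least precise part. Item~(5) concerns the Patterson--Sullivan and harmonic measures on $X(\infty)$, where the harmonic measure arises from the Anderson--Schoen identification of $X(\infty)$ with the Martin boundary; Ledrappier's argument goes through the relation between these measure classes and the classes associated to the Bowen--Margulis and Liouville measures, and is not literally a Radon--Nikodym computation producing $\Delta b_v - h_{vol}$ as a density. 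Your other steps -- the eigenfunction $e^{-h_{vol} b_v/2}$ for (3)$\Rightarrow$(4), the appeal to~(\ref{eq:inq_vol_eigen}) for the reverse inequality, and the use of Theorem~\ref{thm:led_basic} together with full support of the Patterson--Sullivan class for (4)$\Rightarrow$(3) -- are all consistent with how the paper handles the analogous implications in Theorem~\ref{thm:visib}; you would want to make explicit the elliptic-regularity closure argument (as in Proposition~\ref{prop:char}) when upgrading the almost-everywhere conclusion of Theorem~\ref{thm:led_basic} to a conclusion for every Busemann function.
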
 

Motivated by this theorem, we obtain a new characterization of symmetric spaces among the so called ``visibility'' manifolds. This class of manifolds was originally defined by Eberlein and O\'Neil~\cite{EO73}. Ruggiero~\cite[Theorem 6.8]{ruggiero07} showed that visibility manifolds with a compact quotient are exactly the Gromov hyperbolic manifolds whose geodesic diverge.

In the negative curvature setting, the harmonic measures naturally arise from the identification of the Martin boundary and geometric boundary at infinity due to Anderson and Schoen~\cite{AS85}. For a general manifold, we follow Ledrappier~\cite{Led10} and consider harmonic measures on the laminated space $X_M = (X \times \partial \hat{X})/\Gamma$. Again delaying definitions we will prove the following:

\begin{theorem} 
\label{thm:visib}
Suppose $M$ is a compact Riemannian manifold without conjugate points. Let $X$ be the universal Riemannian cover of $M$ with deck transformations $\Gamma=\pi_1(M) \subset \text{Isom}(X)$. If $X$ is a visibility manifold, then the following are equivalent
\begin{enumerate}
\item $X$ is a rank one symmetric space of noncompact type, 
\item $X$ is asymptotically harmonic,
\item each Busemann function $b_v$ is $C^2$ and $\Delta b_v \equiv h_{vol}$,
\item there exists a function $f:X \rightarrow \RR$ that is 1-Lipschitz and has $\Delta f \geq h_{vol}$ (in the sense of distributions),
\item $4\lambda_{min} = h_{vol}^2$,
\item there exists a $\Gamma$-Patterson-Sullivan measure $\{ \nu_x : x \in X\}$ on $\partial \hat{X}$ such that the measure
\begin{align*}
d\wt{m} = dx \times d\nu_x(\xi)
\end{align*}
on $X \times \partial \hat{X}$ descends to a harmonic measure on the laminated space $X_M$.
\end{enumerate}
\end{theorem}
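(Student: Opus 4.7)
The plan is to establish two overlapping cycles of implications. The first, $(1) \Rightarrow (3) \Rightarrow (2) \Rightarrow (1)$, gives $(1)\Leftrightarrow(2)\Leftrightarrow(3)$. The implication $(3) \Rightarrow (2)$ is immediate since $h_{vol}$ is a constant, and $(1) \Rightarrow (3)$ is the standard computation on rank one symmetric spaces of noncompact type, where Busemann functions are smooth with constant Laplacian equal to $h_{vol}$. For $(2) \Rightarrow (1)$, Ruggiero's theorem implies $X$ is Gromov hyperbolic (visibility with compact quotient), so Theorem~\ref{thm:main1}(3) applies and $M$ is either flat or a rank one locally symmetric space of noncompact type; since Euclidean space is not a visibility manifold, the flat case is excluded and $X$ must be a rank one symmetric space of noncompact type.

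The second cycle, $(3) \Rightarrow (4) \Rightarrow (5) \Rightarrow (6) \Rightarrow (3)$, incorporates the remaining three conditions. For $(3) \Rightarrow (4)$, take any $v \in SX$ and set $f = b_v$: Busemann functions on manifolds without conjugate points are 1-Lipschitz, and by (3), $\Delta f \equiv h_{vol}$. For $(4) \Rightarrow (5)$, given $\phi \in C_K^\infty(X)$, integration by parts combined with the hypothesis $\Delta f \geq h_{vol}$ (distributionally) and Cauchy--Schwarz yield
\begin{align*}
h_{vol}\int_X \phi^2 \leq \int_X \phi^2 \Delta f = -\int_X \ip{\nabla \phi^2, \nabla f} \leq 2\Bigl(\int_X \phi^2\Bigr)^{1/2} \Bigl(\int_X \norm{\nabla \phi}^2\Bigr)^{1/2},
\end{align*}
where we use $\norm{\nabla f} \leq 1$ a.e. Rearranging gives $h_{vol}^2 \leq 4\lambda_{min}$, and combined with \eqref{eq:inq_vol_eigen} this yields equality.

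For $(5) \Rightarrow (6)$, since $X$ is visibility with compact quotient, $\Gamma$ has exponential growth, so $h_{vol} > 0$ and $4\lambda_{min} = h_{vol}^2 > 0$. Theorem~\ref{thm:led_basic} then directly yields the desired $\Gamma$-Patterson-Sullivan measure and the associated harmonic measure on $X_M$. For $(6) \Rightarrow (3)$, the conclusion of Theorem~\ref{thm:led_basic} states $\Delta \xi \equiv h_{vol}$ for $\nu_x$-a.e.\ $\xi \in \partial \hat X$. Because $X$ is Gromov hyperbolic with cocompact $\Gamma$, the limit set of $\Gamma$ equals the entire Gromov boundary, and in the visibility setting this forces $\nu_x$ to have full support on $\partial \hat X$. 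For any $v \in SX$, the shifted Busemann function $\xi_v := b_v - b_v(o)$ lies in $\partial \hat X$, so full support provides a sequence $\xi_n \to \xi_v$ in $\partial \hat X$ with $\Delta \xi_n \equiv h_{vol}$, and standard elliptic regularity passes $\Delta = h_{vol}$ to the limit, giving $\Delta b_v \equiv h_{vol}$.

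The main obstacles are $(2)\Rightarrow(1)$ and $(6)\Rightarrow(3)$. The former rests on Theorem~\ref{thm:main1}, itself a deep synthesis of the Anosov property of the geodesic flow with the Benoist--Foulon--Labourie and Besson--Courtois--Gallot rigidity theorems, plus verification that visibility excludes the flat conclusion. The latter requires careful understanding of the Busemann boundary of a visibility manifold: one needs the identification of $\partial \hat X$ with the geometric boundary $X(\infty)$, continuity of $v \mapsto b_v - b_v(o)$, and the elliptic regularity argument ensuring that $\{\xi \in \partial \hat X: \Delta \xi \equiv h_{vol}\}$ is closed under convergence. Establishing full support of the Patterson-Sullivan measure on $\partial \hat X$ from cocompactness of $\Gamma$ in the Gromov hyperbolic setting is a further technical input one must carry out carefully.
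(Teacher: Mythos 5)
Your overall architecture is essentially the paper's: the same chain $(3)\Rightarrow(4)\Rightarrow(5)\Rightarrow(6)$ with Grigor\'yan's integration-by-parts argument and Ledrappier's construction, and $(2)\Rightarrow(1)$ via Ruggiero's theorem, Theorem~\ref{thm:main1}(3), and exclusion of the flat case. Closing the first cycle by the direct computation $(1)\Rightarrow(3)$ instead of the paper's Proposition~\ref{prop:asym_constant} is a harmless (indeed slightly more economical) variant. Two citation-level points: for $(5)\Rightarrow(6)$ the statement you need is Theorem~\ref{thm:param_eq} (Ledrappier), since Theorem~\ref{thm:led_basic} as stated does not include the harmonic-measure conclusion; and the full-support step you defer is not routine --- it requires showing that every element of $\partial\hat{X}$ is a genuine Busemann function $b_v$ (this is where visibility, i.e.\ divergence of geodesics, enters, via Gromov's bounded-difference description of horofunctions), after which Coornaert's theorem on supports of quasi-conformal densities applies.

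The genuine gap is in $(6)\Rightarrow(3)$. You obtain the key fact ``$\Delta\xi\equiv h_{vol}$ for $\nu$-a.e.\ $\xi$'' by quoting ``the conclusion of Theorem~\ref{thm:led_basic},'' but the hypothesis of that theorem is exactly condition (5), which is not available at this point of your cycle (you prove $(5)\Rightarrow(6)$, not the converse); as written the step is circular. What must be done instead --- and what the paper does --- is to extract the almost-everywhere identity from the hypothesis (6) itself: since $d\wt{m}=dx\times d\nu_x(\xi)=e^{-h_{vol}\xi(x)}\,dx\,d\nu_o(\xi)$ descends to a \emph{harmonic} measure, Garnett's representation theorem (equivalently, leafwise integration by parts against leafwise $C^2$ test functions) forces the density $x\mapsto e^{-h_{vol}\xi(x)}$ to be a positive harmonic function for $\nu_o$-a.e.\ $\xi$; combining $\Delta e^{-h_{vol}\xi}=e^{-h_{vol}\xi}\bigl(h_{vol}^2\norm{\nabla\xi}^2-h_{vol}\Delta\xi\bigr)=0$ with $\norm{\nabla\xi}\equiv 1$ (valid for all $\xi\in\partial\hat{X}$ because the curvature is bounded below by compactness of $M$) gives $\Delta\xi\equiv h_{vol}$ for a.e.\ $\xi$, with smoothness from harmonicity of $e^{-h_{vol}\xi}$. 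Only after this derivation do your full-support, density, and elliptic-regularity arguments (which match the paper's Proposition~\ref{prop:char}) apply to upgrade the a.e.\ statement to all of $\partial\hat{X}$ and hence to every $b_v-b_v(o)$.
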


\begin{remark}
For a general compact manifold with non-compact universal cover, Ledrappier~\cite{Led10} has essentially shown that (5) and (6) are equivalent. The implication (4) implies (5) is due to Grigor\'yan~\cite[Theorem 11.17]{grig09}.
\end{remark}

\begin{remark}
Eberlein~\cite[Theorem 2]{eberlein72} has proven the following: suppose $(M,g_0)$ is a compact manifold without conjugate points and $(X,\wt{g}_0)$ is the universal Riemannian cover of $(M,g_0)$. Let $g_1$ be any other metric on $M$ without conjugate points and let $\wt{g}_1$ be the metric on $X$ induced from $g_1$. With this notation: $(X,\wt{g}_0)$ is a visibility manifold if and only if $(X,\wt{g}_1)$ is a visibility manifold.
\end{remark}

\subsection{Some History:} If $M$ is not assumed to be compact then there exist nonsymmetric homogeneous Hadamard manifolds, namely the Damek-Ricci spaces, which are asymptotically harmonic ~\cite{DR92}. Asymptotically harmonic manifolds without conjugate points have been classified in dimension 3~\cite{HKS07,SS08,shah11} and in the Einstein, homogeneous case by Heber~\cite{heber06}. All dimension three asymptotically harmonic manifolds are either flat or hyperbolic and any Einstein, homogeneous asymptotically harmonic manifold is either flat, a rank one symmetric space of noncompact type, or a nonsymmetric Damek-Ricci space. As all these examples have nonpositive curvature a theorem of Azencott and Wilson~\cite{AW76} implies that a nonsymmetric Damek-Ricci space does not admit compact quotients. In particular, it seems reasonable to conjecture that every asymptotically harmonic manifold with compact quotient is either flat or a rank one symmetric space.

\section{Preliminaries}

Every Riemannian manifold $M$ considered here will be complete, $SM$ will denote the unit tangent bundle, and $g^t$ the geodesic flow on $SM$. For a vector $v \in SM$, $\gamma_v : \RR \rightarrow M$ will denote the geodesic with $\gamma_v^\prime(0)=v$.

\subsection{Tensors along geodesics:} Given a Riemannian manifold $X$ and a geodesic $\gamma: I \rightarrow X$, let 
\begin{align*}
N_{\gamma} = \{ w \in T_{\gamma(t)} X: g(w,\gamma^\prime(t))=0\}
\end{align*}
be the normal bundle of $\gamma$. A (1,1)-tensor along $\gamma$ is a smooth bundle endomorphism of $N_{\gamma}$, i.e. a smooth map
\begin{align*}
t \in I \rightarrow Y(t) \in \text{End}(\gamma^\prime(t)^{\bot}).
\end{align*}
Given a smooth (1,1)-tensor $Y$ we can take derivatives using the Levi-Civita connection to obtain a new (1,1)-tensor: $Y^\prime = \nabla_{\gamma^\prime(t)} Y$. This differentiation is easily realized using parallel vector fields: if $x_t,y_t$ are parallel vector fields along $\gamma$, then $g(x_t,Y^\prime(t)y_t) = \frac{d}{dt}g(x_t,Y(t)y_t)$.

Let $R$ be the curvature tensor on $X$. An example of a (1,1)-tensor that we will use frequently is the Riemannian curvature tensor $t \rightarrow R_{\gamma}(t)$ given by
\begin{align*}
R_{\gamma}(t)x = R(x,\gamma^\prime(t))\gamma^\prime(t).
\end{align*}
We will usually drop the $\gamma$ and just write $R(t)$.
  
An useful class of (1,1)-tensors are the so called Jacobi tensors. A (1,1)-tensor $\JJ$ along a geodesic $\gamma:\RR \rightarrow X$ is called a \emph{Jacobi tensor} if 
\begin{align*}
\JJ^\prime(t) + R(t)\JJ(t) = 0.
\end{align*}
If $x_t$ is a parallel vector field along $\gamma$ orthogonal to $\gamma^\prime(t)$ then $\JJ(t)x_t$ will be a Jacobi field along $\gamma$.

\subsection{Asymptotically harmonic manifolds:} In this subsection we will recall a useful result of Ranjan and Shah. Suppose $M$ is an asymptotically harmonic manifold with universal Riemannian cover $X$. Then for $v \in SX$ the Busemann function
\begin{align*}
b_v(x) = \lim_{t\rightarrow \infty} d(\gamma_v(t),x)-t.
\end{align*}
is $C^2$ and $\Delta b_v \equiv \alpha$ for some universal constant $\alpha$.

\begin{remark}
In Heber's paper~\cite{heber06} asymptotically harmonic manifolds are defined in terms of the unstable/stable Riccati solutions. If $X$ has no focal points or nonpositive curvature, this will be equivalent to the definition we use. Heber proved that his definition of asymptotically harmonic implies that all Busemann functions $b_v$ have constant Laplacian~\cite[Remark 2.2(c)]{heber06}. Using the weaker definition makes the proof of Theorem~\ref{thm:main1} slightly more technical, but makes it easier to establish that a manifold is asymptotically harmonic in Theorem~\ref{thm:visib}. 
\end{remark} 

\begin{theorem}\cite[Theorem 5.1]{rs2003}\label{thm:cont}
Let $X$ be a simply connected asymptotically harmonic manifold. Then
\begin{enumerate}
\item for $v \in SX$, $b_v \in C^\infty(X)$,
\item the map $v \in SX \rightarrow b_v \in C^\infty(X)$ is continuous.
\end{enumerate}
\end{theorem}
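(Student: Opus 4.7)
This is a direct application of elliptic regularity. By hypothesis $b_v \in C^2(X)$ satisfies $\Delta b_v = \alpha$ with smooth (constant) right-hand side, and the Laplace--Beltrami operator on the smooth Riemannian manifold $X$ is a second-order linear elliptic operator with smooth coefficients. The standard interior Schauder bootstrap---if $u \in C^{k,\beta}_{\mathrm{loc}}$ satisfies $\Delta u = f$ with $f \in C^{k,\beta}_{\mathrm{loc}}$ then $u \in C^{k+2,\beta}_{\mathrm{loc}}$---iterated from $k=0$ yields $b_v \in C^\infty(X)$.

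\textbf{Part (2).} I would proceed in two stages: first establish $C^0_{\mathrm{loc}}$-continuity of $v \mapsto b_v$, then upgrade to $C^\infty_{\mathrm{loc}}$-continuity via elliptic estimates on differences. For the first stage, let $v_n \to v$ in $SX$. Each $b_{v_n}$ is 1-Lipschitz and vanishes at its footpoint (using that in a simply connected manifold without conjugate points every geodesic is globally minimizing), so $\{b_{v_n}\}$ is equicontinuous and locally uniformly bounded. By Arzel\`a--Ascoli every subsequence has a further $C^0_{\mathrm{loc}}$-convergent subsequence with some 1-Lipschitz limit $b$. To identify $b$ with $b_v$ I would pass to the limit in $b_{v_n}(x) = \lim_{t\to\infty}[d(\gamma_{v_n}(t),x)-t]$: the monotonicity of $t \mapsto d(\gamma_{v_n}(t),x)-t$ from the triangle inequality, combined with the continuous dependence $\gamma_{v_n}(t)\to\gamma_v(t)$ in initial data, immediately yields $b \leq b_v$; the reverse inequality requires the convergence in $t$ to be uniform in $n$, which I would extract by an $\varepsilon/3$ argument based on equicontinuity of the family $\{d(\gamma_{v_n}(t),\cdot)-t\}_{n,t}$ on compact subsets of $X$. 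Uniqueness of subsequential limits then promotes this to full $C^0_{\mathrm{loc}}$-convergence $b_{v_n} \to b_v$.

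For the second stage, set $u_n = b_{v_n} - b_v$; since $\Delta b_{v_n} = \Delta b_v = \alpha$, the difference $u_n$ is harmonic on $X$, and by the first stage $u_n \to 0$ uniformly on compacta. Interior elliptic estimates for harmonic functions give, for nested compacta $K \Subset K' \Subset X$,
\[
\norm{u_n}_{C^k(K)} \leq C_{k,K,K'}\norm{u_n}_{L^\infty(K')},
\]
so $u_n \to 0$ in $C^k_{\mathrm{loc}}$ for every $k$, which is the desired $C^\infty_{\mathrm{loc}}$-convergence. The main obstacle is the first stage: producing the matching lower bound $b \geq b_v$ requires uniform-in-$n$ control on the tail of the defining limit $d(\gamma_{v_n}(t),x)-t \to b_{v_n}(x)$ as $t \to \infty$. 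Once this uniformity is established, the upper bound and the elliptic upgrade to $C^\infty$ are routine.
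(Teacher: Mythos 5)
The paper does not actually prove this theorem; it is quoted from Ranjan--Shah, so I assess your argument on its own terms. Part (1) is fine, and so is the final elliptic upgrade in Part (2): once $b_{v_n}\to b_v$ locally uniformly, the differences are harmonic (equal constant Laplacians) and interior estimates give $C^k_{\mathrm{loc}}$ convergence. The genuine gap is precisely the step you flag as the main obstacle, and the device you propose for it cannot work. Equicontinuity of the family $\{d(\gamma_{v_n}(t),\cdot)-t\}_{n,t}$ on compact sets is automatic --- every such function is $1$-Lipschitz in $x$ --- and therefore carries no information about uniformity in $n$ of the $t\to\infty$ tail; an $\varepsilon/3$ argument built on it cannot produce the lower bound $b\geq b_v$. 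More fundamentally, no argument using only the no-conjugate-points structure (monotonicity in $t$, Lipschitz bounds, continuous dependence of geodesics on initial conditions) can close this step: as remarked right after Theorem~\ref{thm:cont}, for general manifolds without conjugate points the map $v\mapsto b_v$ can fail to be continuous even in the $C^0$ topology, so the asymptotically harmonic hypothesis must enter exactly at the identification of the subsequential limit --- and your first stage never uses it. (A Dini-type argument is likewise unavailable, since it presupposes continuity of $(v,x)\mapsto b_v(x)$; compare Proposition~\ref{prop:busemann_fcns}, where that continuity is a hypothesis.)

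The missing idea is that you do not need uniform tail control at all; you need to pass the equation to the limit. Along your Arzel\`a--Ascoli subsequence $b_{v_n}\to b$ locally uniformly, and since each $b_{v_n}$ satisfies $\Delta b_{v_n}\equiv\alpha$ classically, the distributional Laplacian passes to the locally uniform limit, so $\Delta b=\alpha$ and $b$ is smooth by Part (1)'s regularity argument. Your monotonicity argument already gives $b\leq b_v$, and both functions vanish at the common footpoint $p$ of $v$ (the footpoints $p_n\to p$ and $b_{v_n}(p_n)=0$ with the Lipschitz bound). Hence $b_v-b$ is a nonnegative harmonic function on the connected manifold $X$ vanishing at an interior point, so it vanishes identically by the strong maximum principle; every subsequential limit equals $b_v$, which yields $C^0_{\mathrm{loc}}$ convergence, and your harmonic-difference estimate then upgrades this to $C^\infty_{\mathrm{loc}}$. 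This is where the constancy of $\Delta b_v$ does the real work, and it is the quantitative input your outline omits.
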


\begin{remark}
Here $C^{\infty}(X)$ is equipped with the topology of uniform covergence on compact sets: $f_n \rightarrow f$ in $C^\infty(X)$ if and only if for each $m \geq 0$, $\nabla^m f_n \rightarrow \nabla^m f$ converges locally uniformly.
\end{remark}

If $X$ is a general simply connected manifold with nonpositive curvature then $b_v$ will be $C^2$ and the map $v \in SX \rightarrow b_v \in C^2(X)$ will be continuous. In the general case of no conjugate points, the map $v \rightarrow b_v$ may not even be continuous in the $C^0$ topology. 

If $b_v$ is $C^1$, then it is a distance function and so the integral curves of $-\nabla b_v$ are geodesics. If $b_v$ is $C^2$, then the flow generated by the vector field $x \rightarrow -\nabla b_v(x)$ then yields a Jacobi tensor along the geodesic $\gamma_v$. In nonpositive curvature this Jacobi tensor will be the stable Jacobi tensor, but in the general case of no conjugate points the relationship between the two is unclear (in general $b_v$ will not be $C^2$). In the next three subsections we will prove some results relating these two tensors in the case of asymptotically harmonic manifolds.

\subsection{A Useful Endomorphism:}
Suppose $M$ is an asymptotically harmonic manifold with universal Riemannian cover $X$. Let $v \in S_pM$ and suppose $\wt{v} \in S_{\wt{p}}X$ is a lift of $v$. Then by Theorem~\ref{thm:cont} the function $b_{\wt{v}}$ is $C^2$. 
Let $B(\wt{v}) \in \text{End}(\wt{v}^{\bot})$ be the endomorphism defined by $Y \rightarrow \nabla_Y \nabla b_{\wt{v}}(\wt{p})=\nabla^2 b_{\wt{v}}(\wt{p})Y$. 
Notice that $\nabla_{\wt{v}} \nabla b_{\wt{v}}(\wt{p})=0$ and $\nabla^2 b_{\wt{v}}(\wt{p})$ is symmetric with respect to the Riemannian metric so $B(\wt{v})$ does indeed map $\wt{v}^{\bot}$ to  $\wt{v}^{\bot}$. 
Finally using the identification of $v^{\bot}$ and $\wt{v}^{\bot}$ given by our covering map, $B(\wt{v})$ determines an endomorphism $B(v):v^{\bot} \rightarrow v^{\bot}$ that does not depend on our choice of lift.

Notice that $B(v)$ is symmetric with respect to the Riemannian metric and $\tr B(v) = \tr \nabla^2 b_{\wt{v}}(\wt{p})=\Delta b_{\wt{v}}( \wt{p})=\alpha$. Further by Theorem~\ref{thm:cont}, the map 
\begin{align*}
v \in SM \rightarrow B(v) \in \cup_{w \in SM} \text{End}(w^{\bot})
\end{align*}
is continuous.

\begin{lemma}
Let $M$ be an asymptotically harmonic manifold. If $v \in SM$, then the path $t \rightarrow -B(g^t v)$ is a smooth (1,1)-tensor along the geodesic $\gamma_v$ and satisfies the Riccati equation
\begin{align}
\label{eq:riccati}
U^\prime +U^2 +R(t)=0
\end{align}
where $R(t) = R(\cdot,\gamma_v^\prime(t))\gamma_v^\prime(t)$ is the curvature tensor along $\gamma_v$. In particular, the solution $\JJ(t)$ to the differential equation
\begin{align*}
\JJ^\prime(t) & = -B(g^t v) \JJ(t) \\
\JJ(0) & =Id
\end{align*}
is a Jacobi tensor along the geodesic $\gamma_v$. 
\end{lemma}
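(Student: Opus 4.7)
The plan is to identify $B(g^t v)$ with the Hessian of a single Busemann function restricted along $\gamma_{\wt v}$, realize $\JJ$ as the Jacobi tensor coming from a variation through integral curves of the Busemann gradient, and then deduce the Riccati equation from the Jacobi equation in the standard way.

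First, fix a lift $\wt v \in S_{\wt p} X$ of $v$. By Theorem~\ref{thm:cont}, $b_{\wt v} \in C^\infty(X)$, and the definition of the Busemann function gives $b_{g^s \wt v}(x) = b_{\wt v}(x) + s$ for every $s \in \RR$ and $x \in X$. Thus $\nabla^2 b_{g^s \wt v} = \nabla^2 b_{\wt v}$ as tensors on $X$, and $B(g^t v)$ coincides with the restriction of $\nabla^2 b_{\wt v}(\gamma_{\wt v}(t))$ to $(\gamma_{\wt v}^\prime(t))^{\bot}$. Smoothness of $b_{\wt v}$ immediately gives that $t \mapsto -B(g^t v)$ is a smooth $(1,1)$-tensor along $\gamma_v$.

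Second, since $b_{\wt v}$ is a $C^1$ limit of distance functions, $\norm{\nabla b_{\wt v}} \equiv 1$, so the integral curves of $-\nabla b_{\wt v}$ are unit-speed geodesics, and $\gamma_{\wt v}$ is itself the integral curve through $\wt p$. Given $W \in (\gamma_{\wt v}^\prime(0))^{\bot}$, choose a curve $c(u)$ inside the horosphere $\{b_{\wt v}=0\}$ with $c(0)=\wt p$ and $c^\prime(0)=W$, and let $\sigma_u(t)$ be the integral curve of $-\nabla b_{\wt v}$ starting at $c(u)$. Each $\sigma_u$ is a geodesic, so $J_W(t) = \partial_u \sigma_u(t)\big|_{u=0}$ is a Jacobi field along $\gamma_{\wt v}$; because the flow of $-\nabla b_{\wt v}$ sends $\{b_{\wt v}=0\}$ to $\{b_{\wt v}=-t\}$, $J_W(t)$ stays orthogonal to $\gamma_{\wt v}^\prime(t)$, and interchanging variation derivatives gives $J_W^\prime(t) = \nabla_{J_W}(-\nabla b_{\wt v}) = -\nabla^2 b_{\wt v}(J_W(t)) = -B(g^t v) J_W(t)$. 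The $(1,1)$-tensor $\JJ(t)$ defined by $\JJ(t)W = J_W(t)$ is therefore the unique solution to $\JJ^\prime = -B(g^t v)\JJ$ with $\JJ(0)=Id$, and it is a Jacobi tensor by construction.

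Finally, on a neighbourhood of $0$ where $\JJ$ is invertible, setting $U(t)=-B(g^t v)=\JJ^\prime \JJ^{-1}$ and differentiating $\JJ^\prime = U\JJ$ yields $\JJ^{\prime\prime} = U^\prime \JJ + U^2 \JJ$, which together with the Jacobi equation $\JJ^{\prime\prime} + R\JJ = 0$ forces $(U^\prime + U^2 + R)\JJ = 0$, hence the Riccati equation $U^\prime + U^2 + R = 0$ near $0$. Re-running the same argument from any basepoint $\gamma_{\wt v}(t_0)$, with $g^{t_0}\wt v$ in place of $\wt v$, extends the identity to all of $\RR$. The main obstacle is really the first step: once one sees that $B(g^t v)$ is nothing but the restriction of the single smooth Hessian $\nabla^2 b_{\wt v}$ along $\gamma_{\wt v}$, which rests on the shift relation $b_{g^s\wt v}=b_{\wt v}+s$, the rest is the textbook passage from the Jacobi equation to the Riccati equation.
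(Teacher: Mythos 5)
Your proof is correct, and it takes a genuinely different route from the paper's. The paper works directly at the infinitesimal level: it extends a parallel normal field $y_t$ to a vector field near $\gamma_v$, notes that $[y_t,-\nabla b_v]=U(t)y_t$, and then expands the curvature tensor $R(y_t,\gamma')\gamma'=\nabla_{y_t}\nabla_{\gamma'}\gamma'-\nabla_{\gamma'}\nabla_{y_t}\gamma'-\nabla_{[y_t,\gamma']}\gamma'$ term by term, using $\gamma_v'=-\nabla b_v$ and $\nabla_{\gamma'}\gamma'=0$, to land on the Riccati equation in a single computation; the Jacobi tensor claim is then an immediate corollary of the Riccati equation. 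You instead build the Jacobi tensor first, geometrically, as the variational field of a family of integral curves of $-\nabla b_{\wt v}$ starting on the horosphere, verify via the symmetry of the connection that it solves the linear ODE $\JJ'=-B(g^tv)\JJ$, and only then pass to the Riccati equation by the standard algebraic manipulation $U=\JJ'\JJ^{-1}$. The two arguments use the same underlying fact, namely the shift law $b_{g^s\wt v}=b_{\wt v}+s$; the paper leaves it implicit in the identification $U(t)=-\nabla^2 b_{v}(\gamma_v(t))|_{\perp}$, while you state it and use it to get smoothness of $t\mapsto -B(g^tv)$ outright. The paper's route is shorter and stays purely local; yours is more geometric and makes the origin of the Jacobi tensor transparent, at the modest cost of needing a two-stage argument (invertibility of $\JJ$ near $0$ plus re-basing at an arbitrary $t_0$) to get the Riccati identity on all of $\RR$ rather than on a neighborhood.
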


\begin{proof}
By passing to the universal cover of $M$ we can suppose $M$ is simply connected. Fix a parallel vector field $y_t$ along the geodesic $\gamma_v(t)$ orthogonal to $\gamma_v^\prime(t)$. Let $U$ be the (1,1)-tensor along $\gamma_v$ given by $t \rightarrow -B(g^t v)$. By the symmetry of the connection and the fact that $\gamma_v^\prime(t)=-\nabla b_v(\gamma_v(t))$ we have
\begin{align*}
[y_t,-\nabla b_v] = \nabla_{y_t} (-\nabla b_v) - \nabla_{-\nabla b_v } (y_t) = -\nabla_{y_t} \nabla b_v = U(t)(y_t)
\end{align*} 
Again using the fact that $\gamma_v^\prime(t)=-\nabla b_v(\gamma_v(t))$ we have
\begin{align*}
R(t)y_t
&=R(y_t,\gamma_v^\prime(t))\gamma_v^\prime(t) \\
&= \nabla_{y_t} \nabla_{\gamma_v^\prime(t)} (\gamma_v^\prime(t))-\nabla_{\gamma_v^\prime(t)} \nabla_{y_t} (-\nabla b_v(\gamma_v(t))) - \nabla_{[y_t,-\nabla b_v]} (-\nabla b_v(\gamma_v(t))) \\
&= \nabla_{\gamma_v^\prime(t)} \nabla_{y_t} \nabla b_v(\gamma(t))+ \nabla_{U(y_t)} \nabla b_v((\gamma_v(t))) \\
&= -U^\prime(t)(y_t) - U^2(y_t)
\end{align*}
And hence the path $t \rightarrow U(t)$ satisfies the Riccati equation.
\end{proof}

\begin{lemma}
\label{lem:Buse_ineq}
Let $M$ be a asymptotically harmonic manifold with universal constant $\alpha \equiv \Delta b_v$. If $v \in SM$, then $B(-v)+B(v) \geq 0$. In particular $\alpha \geq 0$.
\end{lemma}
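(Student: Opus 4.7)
My plan is to realize $f = b_v + b_{-v}$ as a smooth nonnegative function that vanishes (and hence attains its global minimum) at the basepoint, and then invoke the standard second-derivative test at a minimum.

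First, I would pass to the universal cover $X$, fix a lift $\wt v \in S_{\wt p} X$ of $v$, and work with $f = b_{\wt v} + b_{-\wt v}$. I would show that $f \geq 0$ on $X$ with $f(\wt p) = 0$. Since $X$ is simply connected and has no conjugate points, every geodesic segment in $X$ is globally minimizing (the exponential map $\exp_{\wt p}$ is a diffeomorphism), so $d(\gamma_{\wt v}(\pm t), \wt p) = t$ and therefore $b_{\pm \wt v}(\wt p) = 0$. For an arbitrary $x \in X$, the triangle inequality and the same minimality give
\[
\bigl(d(\gamma_{\wt v}(t), x) - t\bigr) + \bigl(d(\gamma_{\wt v}(-t), x) - t\bigr) \geq d(\gamma_{\wt v}(t), \gamma_{\wt v}(-t)) - 2t = 0,
\]
and sending $t \to \infty$ yields $f(x) \geq 0$.

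Since $M$ is asymptotically harmonic, Theorem~\ref{thm:cont} guarantees $b_{\wt v}, b_{-\wt v} \in C^\infty(X)$, so $f$ is smooth. Consequently $\wt p$ is a global minimum of a smooth function, and the Hessian must satisfy $\nabla^2 f(\wt p) \geq 0$ as a symmetric bilinear form on $T_{\wt p} X$. Restricting to $\wt v^\perp \times \wt v^\perp$ and identifying with $v^\perp$ via the covering map yields the operator inequality $B(v) + B(-v) \geq 0$. The trace statement is then immediate: because $\nabla_v \nabla b_{\wt v} = \frac{D}{ds}\bigr|_{s=0}(-\gamma_{\wt v}'(s)) = 0$, the vector $\wt v$ lies in the kernel of $\nabla^2 b_{\wt v}(\wt p)$, so $\tr B(v) = \tr \nabla^2 b_{\wt v}(\wt p) = \Delta b_{\wt v}(\wt p) = \alpha$, and similarly $\tr B(-v) = \alpha$; hence $2\alpha = \tr(B(v) + B(-v)) \geq 0$.

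I do not anticipate any substantive obstacle. The only point requiring care is the identity $d(\gamma_{\wt v}(t), \gamma_{\wt v}(-t)) = 2t$, which rests on geodesics in $X$ being globally minimizing; this is the standard consequence of simple connectivity together with the absence of conjugate points. Everything else is the routine second-derivative test applied to the smooth function $f$.
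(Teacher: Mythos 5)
Your proposal is correct and follows essentially the same route as the paper: show $b_{\wt v}+b_{-\wt v}\geq 0$ via the triangle inequality applied to the truncated functions, note equality at $\wt p$, and apply the second-derivative test to conclude $\nabla^2 b_{\wt v}(\wt p)+\nabla^2 b_{-\wt v}(\wt p)\geq 0$, with the trace identity giving $\alpha\geq 0$. Your explicit justification that $d(\gamma_{\wt v}(t),\gamma_{\wt v}(-t))=2t$ (geodesics minimize in the simply connected cover without conjugate points) is a detail the paper leaves implicit, but the argument is the same.
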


\begin{proof}
By passing to the universal cover of $M$, we may suppose $M$ is simply connected. Suppose $v \in S_p M$ then $B(v) = \nabla^2 b_{v}(p)|_{v^{\bot}}$. Notice that,
\begin{align*}
b_{-v}^T(x)+b_{v}^T(x) = d(x,\gamma_v(T))+d(x,\gamma_{-v}(T))-2T \geq 0
\end{align*}
by the triangle inequality. So
\begin{align*}
b_{-v}(x) + b_{v}(x) = \lim_{T\rightarrow \infty} b_{-v}^T(x)+b_{v}^T(x) \geq 0.
\end{align*}
Further $b_{-v}(p) + b_{v}(p) =0$ and $\nabla b_{-v}(p) + \nabla b_{v}(p)=0$. So by the second derivative test applied at $x=p$ we have $\nabla^2 b_{-v}(p) + \nabla^2 b_v(p) \geq 0$.
\end{proof}

\subsection{The Stable and Unstable Riccati solutions} 

We begin by introducing the stable and unstable Riccati solutions. Suppose $M$ is a complete Riemannian manifold without conjugate points, let $v \in SM$ and consider the Jacobi tensor $\JJ_{v,T}$ along $\gamma_v$ such that $\JJ_{v,T}(0)=Id$ and $\JJ_{v,T}(T)=0$. Let $U^s_T(v)=\JJ_{v,T}^\prime(0)$. We then have the following.

\begin{proposition}\cite{green58}
\label{prop:riccati_basic}
With the notation above,
\begin{enumerate}
\item $\JJ_{v,T}$ converges to a Jacobi tensor $\JJ_v^s$ along $\gamma_v$,
\item $U^s_T(v)$ converges monotonically to an endomorphism $U^s(v)$ in the sense that $U^s_{T_2}(v)-U^s_{T_1}(v)$ is positive definite for all $T_2>T_1>0$,
\item $U^s(v) = (\JJ_v^s)^\prime(0)$ and $U^s(g^tv) = (\JJ_v^s)^\prime(t)\JJ_v^s(t)^{-1}$,
\item the (1,1)-tensor $t \rightarrow U^s(g^tv)$ satisfies the Riccati equation:
\begin{align*}
(U^s)^\prime+(U^s)^2+R=0
\end{align*}
\end{enumerate}
where $R(t) = R(\cdot, \gamma_v^\prime(t))\gamma_v^\prime(t)$ is the curvature tensor along $\gamma_v$.  
\end{proposition}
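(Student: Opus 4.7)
The plan is to use the variational characterization of Jacobi fields --- they minimize the index form in the absence of conjugate points --- as the main device, and then pass to the limit via ODE continuous dependence. Existence and uniqueness of $\JJ_{v,T}$ for each $T>0$ follow from writing a general Jacobi tensor along $\gamma_v$ as $\JJ_0 + \JJ_1 B$, where $\JJ_0,\JJ_1$ are the standard tensors determined by $\JJ_0(0)=Id,\ \JJ_0'(0)=0$ and $\JJ_1(0)=0,\ \JJ_1'(0)=Id$; the boundary condition $\JJ(T)=0$ uniquely determines $B=-\JJ_1(T)^{-1}\JJ_0(T)$ because $\JJ_1(T)$ is invertible ($\gamma_v(0)$ and $\gamma_v(T)$ being non-conjugate by hypothesis).

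For the monotonicity in (2), I fix $T_1<T_2$ and a unit $w\in v^\perp$, set $J_i(t)=\JJ_{v,T_i}(t)w$, and extend $J_1$ by zero on $[T_1,T_2]$ to produce a piecewise smooth field $\wt{J}_1$ agreeing with $J_2$ at the endpoints of $[0,T_2]$. Integration by parts against $J_2''+RJ_2=0$ yields $I_{T_2}(\wt{J}_1,\wt{J}_1)=I_{T_2}(J_2,J_2)+I_{T_2}(\wt{J}_1-J_2,\wt{J}_1-J_2)$, and positive definiteness of the index form on nonzero piecewise smooth fields vanishing at the endpoints of a no-conjugate-points interval gives $I_{T_2}(J_2,J_2)<I_{T_2}(\wt{J}_1,\wt{J}_1)=I_{T_1}(J_1,J_1)$. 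A further integration by parts identifies $I_{T_i}(J_i,J_i)=-\langle U^s_{T_i}(v)w,w\rangle$, so $U^s_{T_2}(v)>U^s_{T_1}(v)$. For the uniform upper bound I introduce the ``backwards'' tensors $\widehat{\JJ}_{v,s}$ with $\widehat{\JJ}_{v,s}(-s)=0$ and $\widehat{\JJ}_{v,s}(0)=Id$, let $W_s(v)=\widehat{\JJ}_{v,s}'(0)$, and glue $\widehat{\JJ}_{v,s}w$ on $[-s,0]$ to $\JJ_{v,T}w$ on $[0,T]$; since the glued field is nonzero (its value at $0$ is $w$) and vanishes at both endpoints of $[-s,T]$, which has no conjugate pairs, its index form is strictly positive. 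A parallel computation identifies this index form with $\langle(W_s(v)-U^s_T(v))w,w\rangle$, so $U^s_T(v)<W_1(v)$ uniformly in $T$.

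Monotonicity plus this uniform bound gives convergence of the symmetric operators $U^s_T(v)$ to a symmetric $U^s(v)$, and continuous dependence of ODE solutions on initial data then yields $\JJ_{v,T}(t)\to\JJ^s_v(t)$ pointwise, where $\JJ^s_v$ solves the Jacobi equation with $\JJ^s_v(0)=Id$ and $(\JJ^s_v)'(0)=U^s(v)$. Nondegeneracy of $\JJ^s_v(t)$ for every $t$ is immediate from the no-conjugate-points hypothesis, and the identity $U^s(g^tv)=(\JJ^s_v)'(t)(\JJ^s_v(t))^{-1}$ follows from the semigroup property $\JJ^s_v(t+s)=\JJ^s_{g^tv}(s)\JJ^s_v(t)$, which is itself a consequence of uniqueness of the stable limit. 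The Riccati equation then falls out of differentiating $\JJ'=U\JJ$ and substituting $\JJ''=-R\JJ$. The main obstacle is the index-form bookkeeping for the upper bound, where one needs the symmetry of the curvature endomorphism $R(t)$ (and hence of $U^s_T$ and $W_s$) in order for the boundary contributions to combine into a clean quadratic form in $w$.
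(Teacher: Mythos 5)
The paper itself gives no proof of this proposition; it delegates to Green (1958). Your index-form/second-variation argument is the standard route to Green's theorem, and the monotonicity computation, the uniform upper bound via the backwards tensor $\widehat{\JJ}_{v,s}$, the identification $I_{T}(J,J)=-\langle U^s_T(v)w,w\rangle$ via integration by parts, the symmetry of $U^s_T$ from the constancy of the Wronskian, and the derivation of the Riccati equation from $\JJ'=U\JJ$ are all carried out correctly.

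There is one genuine gap: you assert that nondegeneracy of $\JJ_v^s(t)$ for every $t$ ``is immediate from the no-conjugate-points hypothesis.'' It is not. If $\JJ_v^s(t_0)w=0$ for some $w\neq 0$, then the Jacobi field $J=\JJ_v^s(\cdot)w$ vanishes at the single point $t_0$ and satisfies $J(0)=w\neq 0$; no conjugate pair is produced, so there is no contradiction to read off. (Contrast this with $\JJ_{v,T}$ for finite $T$, where vanishing at $t_0\neq T$ and at $T$ genuinely forces a conjugate pair.) And a pointwise limit of nonsingular matrices can be singular, so convergence by itself does not rescue you. Since part (3) of the statement explicitly writes $\JJ_v^s(t)^{-1}$ for all $t$, you must actually prove nondegeneracy. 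Fortunately the tools you already assembled suffice, via \emph{strict} monotonicity. For $t_0>0$: if $\JJ_v^s(t_0)w=0$, extend $J$ by zero on $[t_0,T]$ and compare the index form on $[0,T]$ against that of $J_T=\JJ_{v,T}(\cdot)w$; the strict minimality of Jacobi fields forces $\langle U_T^s(v)w,w\rangle>\langle U^s(v)w,w\rangle$, contradicting $U^s_T(v)<U^s(v)$. For $t_0<0$: by uniqueness $J=\widehat{\JJ}_{v,-t_0}(\cdot)w$, so $U^s(v)w=W_{-t_0}(v)w$; but the same index-form comparison you used for $U^s_T$ shows that $s\mapsto W_s(v)$ is strictly decreasing, and together with $U^s(v)\le W_s(v)$ (the limit of your bound $U^s_T<W_s$) this forces strict inequality $U^s(v)<W_s(v)$ for every $s>0$ — contradiction. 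With nondegeneracy established, your semigroup argument for (3) and the Riccati computation for (4) go through as written.
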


The tensor $\JJ^s_v$ is called the \emph{stable Jacobi tensor along $\gamma_v$} and the map $v \rightarrow U^s(v)$ is called the \emph{stable Riccati solution}. We also have the \emph{unstable Jacobi tensor along $\gamma_v$} given by $\JJ^u_v(t) = \JJ^s_{-v}(-t)$ and the \emph{unstable Riccati solution} given by $U^u(v) = -U^s(-v)$. Notice that the (1,1)-tensor $t \rightarrow U^u(g^tv)$ also satisfies the Riccati equation.

The stable and unstable Jacobi fields have the following geometric characterization due to Eberlein.

\begin{lemma}
\cite[Proposition 2.12, Corollary 2.14]{eberlein73I}\label{lem:kerV}
Let $M$ be a complete Riemannian manifold without conjugate points and sectional curvature bounded from below. Suppose $\gamma$ is a unit speed geodesic and $J(t)$ is a Jacobi field along $\gamma$ with $g(J(0),\gamma^\prime(0))=0$ then
\begin{enumerate}
\item if $\norm{J(t)}$ is bounded above for all $t \geq 0$ then $J^\prime(0)=U^s(v)J(0)$,
\item if $\norm{J(t)}$ is bounded above for all $t \leq 0$ then $J^\prime(0)=U^u(v)J(0)$.
\end{enumerate}
If, in addition, $M$ has no focal points then following are equivalent:
\begin{enumerate}
\item $\norm{J(t)}$ is bounded above for all $t \in \RR$,
\item $\norm{J(t)}$ is constant,
\item $J^\prime(0)=U^u(v)J(0)=U^s(v)J(0)$.
\end{enumerate}
\end{lemma}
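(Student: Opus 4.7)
The plan for the first two assertions is to compare the bounded Jacobi field $J$ directly with the approximants used to construct $U^s(v)$ in Proposition~\ref{prop:riccati_basic}. Setting $v=\gamma^\prime(0)$, for each $T>0$ let $J_T$ denote the unique Jacobi field (which exists by the absence of conjugate points) with $J_T(0)=J(0)$ and $J_T(T)=0$. By construction, $J_T^\prime(0)=U^s_T(v)J(0) \to U^s(v)J(0)$ as $T\to\infty$, so it is enough to show that $J^\prime(0)=\lim_T J_T^\prime(0)$. Writing $C$ for the Jacobi tensor with $C(0)=0$ and $C^\prime(0)=\mathrm{Id}$, the difference $D_T := J-J_T$ is a Jacobi field with $D_T(0)=0$ and $D_T(T)=J(T)$, so that $D_T^\prime(0)=C(T)^{-1}J(T)$.

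The heart of the proof is then showing $\norm{C(T)^{-1}}\to 0$ as $T\to\infty$, which combined with the boundedness of $\norm{J(T)}$ gives $D_T^\prime(0)\to 0$ and hence the desired identification $J^\prime(0)=U^s(v)J(0)$. This is precisely where the sectional curvature lower bound $K\geq -\kappa^2$ enters: analyzing the Riccati operator $V(t)=C^\prime(t)C(t)^{-1}$, each of its eigenvalues $\lambda(t)$ satisfies $\lambda^\prime+\lambda^2\leq \kappa^2$ by the Riccati equation~\eqref{eq:riccati} together with the curvature hypothesis, so $\lambda(t)$ stays bounded above by $\max(\lambda(1),\kappa)$ for $t\geq 1$; combined with no conjugate points this forces $C(T)$ to be uniformly large in every direction. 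Assertion (2) follows from (1) applied to the reversed geodesic $t\mapsto \gamma(-t)$, using the identity $U^u(v)=-U^s(-v)$.

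For the equivalences in the no-focal-points case, the key input is that each approximating field $J_T$ has $\norm{J_T(t)}^2$ strictly convex on $[0,T]$: indeed, $s\mapsto J_T(T-s)$ is a Jacobi field vanishing at $s=0$, whose squared norm is convex by the no-focal-points assumption. Since $J_T$ cannot vanish on $(0,T)$ (else it would coincide with the zero Jacobi field at two distinct points, contradicting no conjugate points), this strict convexity together with the boundary values $\norm{J_T(0)}=\norm{J(0)}$ and $\norm{J_T(T)}=0$ forces $\norm{J_T(t)}$ to be strictly decreasing on $[0,T]$. Passing to the limit, $\norm{\JJ_v^s(t)X}$ is non-increasing in $t\geq 0$, and dually $\norm{\JJ_v^u(t)X}$ is non-decreasing in $t\leq 0$.

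Given these monotonicity properties the equivalences fall out easily: (3)$\Rightarrow$(1) because $J^\prime(0)=U^s(v)J(0)=U^u(v)J(0)$ identifies $J$ with both the stable Jacobi field (bounded for $t\geq 0$) and the unstable one (bounded for $t\leq 0$); (1)$\Rightarrow$(3) is the first part of the lemma applied in both time directions; (1)$\Rightarrow$(2) follows because, being simultaneously non-increasing and non-decreasing, $\norm{J}$ must be constant; and (2)$\Rightarrow$(1) is immediate. The main obstacle throughout is the Riccati estimate $\norm{C(T)^{-1}}\to 0$, which is delicate precisely because we have only a one-sided curvature bound; everything else is comparatively routine.
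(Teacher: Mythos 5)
You should first note that the paper itself offers no proof of this lemma---it is quoted from Eberlein \cite{eberlein73I} (with \cite{eschen77} for the no-focal-points statements)---so your attempt has to stand on its own, and its skeleton is indeed the classical one: reducing assertion (1) to $C(T)^{-1}J(T)\rightarrow 0$, where $C$ is the Jacobi tensor with $C(0)=0$, $C^\prime(0)=Id$, is exactly right, and (2) does follow from (1) via $U^u(v)=-U^s(-v)$. The genuine gap is your justification of $\norm{C(T)^{-1}}\rightarrow 0$. From $K\geq -\kappa^2$ and the Riccati equation you only get an \emph{upper} bound on $V(t)=C^\prime(t)C(t)^{-1}$ (this is the paper's Lemma~\ref{lem:bound}: $\norm{V(t)}\leq \kappa\coth(\kappa t)$), and an upper bound on the logarithmic derivative of $C$ limits how fast $C$ can grow; it cannot force $C(T)$ to become ``uniformly large in every direction''---growth lower bounds come from curvature \emph{upper} bounds, which are not assumed. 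So the sentence ``combined with no conjugate points this forces $C(T)$ to be uniformly large'' is precisely the nontrivial content of the lemma and no argument is given for it. The statement is true under the hypotheses, but a real proof needs more: for instance, the monotone convergence $U^s_T(v)\nearrow U^s(v)$ of Proposition~\ref{prop:riccati_basic} (this is where the curvature lower bound actually enters) is equivalent to the convergence of $\int_1^{\infty}(C^*C)^{-1}(s)\,ds$, and combining that with the two-sided bound $\norm{C^\prime C^{-1}}\leq\kappa\coth(\kappa t)$ (which forbids narrow spikes of $\norm{(C^*)^{-1}}$) one deduces that the smallest singular value of $C(T)$ tends to infinity; equivalently one can work with $w_T=C(T)^{-1}J(T)=J^\prime(0)-U^s_T(v)J(0)$, which converges automatically, and show its limit vanishes. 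Some argument of this kind is indispensable, and your sketch does not supply it.

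In the no-focal-points part there is a second, smaller error: $\norm{J_T(t)}^2$ is \emph{not} convex in general, since manifolds without focal points may have positive sectional curvature and $\tfrac{d^2}{dt^2}\norm{J}^2=2\big(\norm{J^\prime}^2-\ip{R(t)J,J}\big)$ can be negative. What you actually need---and what is the standard characterization of no focal points---is that a Jacobi field vanishing at a point has nondecreasing norm afterwards; applied to $s\mapsto J_T(T-s)$ this gives that $\norm{J_T(t)}$ is nonincreasing on $[0,T]$, and in the limit the monotonicity of $\norm{\JJ^s_v(t)X}$ for $t\geq 0$, so this step is repairable by replacing convexity with the correct monotonicity. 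Finally, in (1)$\Rightarrow$(2) monotonicity based at $t=0$ alone only shows $\norm{J}$ is nondecreasing on $(-\infty,0]$ and nonincreasing on $[0,\infty)$, i.e.\ has a maximum at $0$; to get constancy you must use that $J$ is bounded on both half-lines from \emph{every} basepoint, so that $J^\prime(t_0)=U^s(g^{t_0}v)J(t_0)=U^u(g^{t_0}v)J(t_0)$ and the monotonicity argument applies starting at each $t_0\in\RR$.
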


\begin{remark}
The above assertions concerning manifolds without focal points follow from Eberlein's results, but complete details can be found in~\cite[Section 5]{eschen77}.
\end{remark}

\subsection{The connection between the stable, unstable Riccati solutions and the Busemann functions.} 

\begin{lemma}
\label{lem:ineq1}
Let $M$ be an asymptotically harmonic manifold. If $v \in SM$ then
\begin{enumerate}
\item $U^s(v) \leq -B(v)$,
\item $U^u(v) \geq B(-v)$.
\end{enumerate}
\end{lemma}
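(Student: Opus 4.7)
The plan is to derive both inequalities from a single Riccati comparison along $\gamma_v$. By the previous lemma, $U_b(t) := -B(g^tv)$ is a smooth symmetric Riccati solution (i.e.\ $U_b' + U_b^2 + R = 0$) defined for all $t \in \RR$. By Proposition~\ref{prop:riccati_basic}, the ``Dirichlet'' solution $U^s_T$ coming from $\JJ_{v,T}$ is a symmetric Riccati solution on $[0,T)$ with $U^s_T(v) \nearrow U^s(v)$ as $T\to\infty$. Thus it suffices to show $U^s_T(v) \le U_b(0) = -B(v)$ for every $T>0$ and then pass to the limit.

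To compare the two Riccati solutions I would form $W(t) := U_b(t) - U^s_T(t)$, a smooth symmetric $(1,1)$-tensor along $\gamma_v|_{[0,T)}$. Subtracting the two Riccati equations and invoking the non-commutative identity $U_b^2 - (U^s_T)^2 = U_b W + W U^s_T$ yields $W' = -U_b W - W U^s_T$. Next I would introduce the symmetric endomorphism
\[
A(t) := \JJ_{v,T}(t)^{*} \, W(t) \, \JJ_{v,T}(t)
\]
and compute $A'$ using $\JJ_{v,T}' = U^s_T \JJ_{v,T}$ together with the symmetry of $U^s_T$; the three contributions should collapse to the clean formula
\[
A'(t) = -\JJ_{v,T}(t)^{*} \, W(t)^2 \, \JJ_{v,T}(t) \le 0.
\]
In other words, $A$ is non-increasing in the symmetric operator order. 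This is the standard matrix Riccati comparison trick.

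To finish, I would establish the boundary limit $A(t) \to 0$ as $t \to T^-$. Since $M$ has no conjugate points, $\JJ_{v,T}$ is invertible on $[0,T)$ and vanishes to first order at $T$: writing $\JJ_{v,T}(t) = (t-T)M + O((T-t)^2)$ with $M$ invertible gives $U^s_T(t) \sim -(T-t)^{-1}\mathrm{Id}$ and $W(t) \sim (T-t)^{-1}\mathrm{Id}$, whence $A(t) \sim (T-t)\, M^{*}M \to 0$. Combined with the monotonicity of $A$ this forces $A(0) \ge 0$; since $\JJ_{v,T}(0) = \mathrm{Id}$ we have $A(0) = -B(v) - U^s_T(v)$, so $U^s_T(v) \le -B(v)$. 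Sending $T\to\infty$ proves (1), and (2) follows on applying (1) to $-v$ and using $U^u(v) = -U^s(-v)$.

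The main obstacle I expect is verifying the algebraic cancellation that produces $A'(t) = -\JJ_{v,T}^{*} W^2 \JJ_{v,T}$: it depends essentially on both $U_b$ and $U^s_T$ being symmetric and is a touch delicate in the matrix (non-commuting) setting. The boundary asymptotic at $t=T$ and the final monotone limit are then routine given the no-conjugate-points hypothesis and Proposition~\ref{prop:riccati_basic}.
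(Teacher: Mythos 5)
Your proof is correct, but it takes a genuinely different route from the paper's. The paper argues directly at the level of the Busemann functions themselves: since
\begin{align*}
b_v(x) \leq b_v^T(x) := d(x,\gamma_v(T))-T
\end{align*}
by the triangle inequality, and $b_v$ and $b_v^T$ agree to first order at $p=\pi(v)$, the second derivative test gives $\nabla^2 b_v(p) \leq \nabla^2 b_v^T(p)$. Combined with $U^s_T(v) = -\nabla^2 b_v^T(p)|_{v^\perp}$ and $B(v)=\nabla^2 b_v(p)|_{v^\perp}$, this yields $U^s_T(v) \leq -B(v)$ in one stroke. You instead establish the same inequality $U^s_T(v)\leq -B(v)$ by a matrix Riccati comparison along $\gamma_v$, exploiting that $-B(g^t v)$ and $U^s_T(g^tv)$ both solve the Riccati equation, then introducing the auxiliary tensor $A(t)=\JJ_{v,T}(t)^* W(t)\JJ_{v,T}(t)$, showing $A' \leq 0$ via the cancellation $A' = -\JJ_{v,T}^* W^2 \JJ_{v,T}$, and pinning down the boundary limit $A(T^-)=0$ using the invertibility of $\JJ_{v,T}'(T)$ (which indeed follows from the no-conjugate-points hypothesis as you implicitly use). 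Both approaches are sound; the paper's is shorter and more elementary, sidestepping the Riccati ODE entirely by working with the functions rather than their Hessians along the geodesic, whereas yours is the standard self-contained comparison argument for matrix Riccati equations and would also apply in contexts where one has Riccati solutions but no convenient global comparison of potentials. One small point worth making explicit in your write-up: the self-adjointness of $U^s_T$ (and of $-B(g^tv)$) is used essentially in the cancellation producing $A'=-\JJ_{v,T}^*W^2\JJ_{v,T}$, and should be cited rather than assumed silently.
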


\begin{proof}
By passing to the universal cover of $M$, we may suppose $M$ is simply connected. If $v \in S_pM$ then $B(v) = \nabla^2 b_{v}(p)|_{v^{\bot}}$. By the definition of $U^u(v)$ it is enough to prove the first inequality. Observe that the integral curves of 
\begin{align*}
x \rightarrow b_v^T(x)=d(x,\gamma_v(T))-T
\end{align*}
on $X\setminus \{\gamma_v(T)\}$ are geodesics and these geodesics give rise to the Jacobi tensor $\JJ_{v,T}$ along $\gamma_v$. It is not hard to show that $ U^s_T(v)=-\nabla^2 b_v^T(p)|_{v^{\bot}}$. Further 
\begin{align*}
b_v(x) 
&= \lim_{t \rightarrow \infty} d(x,\gamma_v(t))-t \\
&\leq \lim_{t \rightarrow \infty} d(x,\gamma_v(T))+d(\gamma_v(T),\gamma_v(t))-t\\
&= d(x,\gamma_v(T))-T = b_v^T(x),
\end{align*}
$b_v^T(p)=b_v(p)=0$, and $\nabla b_v^T(p) = \nabla b_v(p)=-v$. Hence by the second derivative test applied to $b_v^T-b_v$ at $x =p$, we see that $\nabla^2 b_v^T(p) \geq \nabla^2 b_v(p)$. Then
\begin{equation*}
U^s(v) = \lim_{T \rightarrow +\infty} U^s_T(v) \leq -\nabla^2 b_v(p)|_{v^{\bot}} = -B(v). \qedhere
\end{equation*}
\end{proof}

\begin{lemma}
\label{lem:rank_defn}
Let $M$ be an asymptotically harmonic manifold. Suppose $v \in SM$ then
\begin{enumerate}
\item for $T>0$, $U^u(v)-U^s_T(v) > U^u(v)-U^s(v) \geq 0$,
\item if $\det (B(-v)+B(v)) \neq 0$ then $\det (U^u(v)-U^s(v)) \neq 0$.
\end{enumerate}
\end{lemma}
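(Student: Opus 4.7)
The plan is to deduce both parts by combining Lemma~\ref{lem:ineq1} and Lemma~\ref{lem:Buse_ineq} with the monotone convergence $U^s_T(v)\to U^s(v)$ from Proposition~\ref{prop:riccati_basic}. The key observation is the chain
\begin{align*}
U^u(v) - U^s(v) = -U^s(-v) - U^s(v) \geq B(-v) + B(v) \geq 0,
\end{align*}
where the first equality uses $U^u(v) = -U^s(-v)$, the middle inequality applies Lemma~\ref{lem:ineq1} to both $v$ and $-v$, and the last is Lemma~\ref{lem:Buse_ineq}. This already yields the weak inequality $U^u(v)-U^s(v)\geq 0$ appearing in (1).

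For the strict inequality $U^u(v)-U^s_T(v) > U^u(v)-U^s(v)$, it suffices to show $U^s(v)-U^s_T(v) > 0$ for each finite $T>0$. Proposition~\ref{prop:riccati_basic} provides $U^s_{T'}(v) - U^s_T(v) > 0$ for every $T'>T$, but direct passage to the limit would a priori only preserve the weak inequality. I avoid this by factoring through an intermediate point: for $T' > T+1$,
\begin{align*}
U^s_{T'}(v) - U^s_T(v) = \bigl(U^s_{T'}(v) - U^s_{T+1}(v)\bigr) + \bigl(U^s_{T+1}(v) - U^s_T(v)\bigr),
\end{align*}
where the first summand is positive semidefinite and the second is the fixed positive definite endomorphism $U^s_{T+1}(v)-U^s_T(v)$. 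Taking $T'\to\infty$ then gives $U^s(v)-U^s_T(v)\geq U^s_{T+1}(v)-U^s_T(v) > 0$, which completes (1).

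Part (2) is then immediate: by the chain in the first paragraph, $U^u(v)-U^s(v) \geq B(-v)+B(v)$, and under the hypothesis the dominating endomorphism is symmetric, positive semidefinite, and nonsingular, hence positive definite. Thus $U^u(v)-U^s(v)$ itself is positive definite, and in particular has nonzero determinant. The only mild technical subtlety in the whole argument is the strict-versus-weak inequality in the limit of monotone operators, which the intermediate-point trick handles; everything else is pure assembly of previously established lemmas.
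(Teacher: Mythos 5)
Your proof is correct and follows essentially the same route as the paper: combine Lemma~\ref{lem:ineq1} and Lemma~\ref{lem:Buse_ineq} for the chain $U^u(v)-U^s(v)\geq B(-v)+B(v)\geq 0$, and use Proposition~\ref{prop:riccati_basic}(2) for the strict inequality $U^u(v)-U^s_T(v) > U^u(v)-U^s(v)$. The paper cites Proposition~\ref{prop:riccati_basic}(2) directly for that strict inequality; your ``intermediate-point'' argument $U^s(v)-U^s_T(v)\geq U^s_{T+1}(v)-U^s_T(v)>0$ simply spells out the standard step implicit in that citation (and, as a minor wording note, in part (2) the nonsingular endomorphism $B(-v)+B(v)$ is the \emph{dominated} one, not the dominating one).
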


\begin{remark} If $\det(U^u(v)-U^s(v))\neq 0$, Lemma~\ref{lem:kerV} implies that the vector space of bounded Jacobi fields along $\gamma_v$ is one dimensional and spanned by the Jacobi field $t \rightarrow \gamma^\prime(t)$.
\end{remark}

\begin{proof}
By Proposition~\ref{prop:riccati_basic} part (2), we have $U^u(v)-U^s_T(v) > U^u(v)-U^s(v)$. Also Lemma~\ref{lem:Buse_ineq} and Lemma~\ref{lem:ineq1} imply that
\begin{align*}
U^u(v)-U^s(v) \geq B(-v) + B(v) \geq 0
\end{align*}
and the lemma follows.
\end{proof}

In Section 4 we will prove the following useful fact.

\begin{proposition}
\label{prop:asym_constant}
Let $M$ be a compact asmptotically harmonic manifold with universal Riemannian cover $X$ and $\Delta b_v \equiv \alpha$ for all $v \in SX$. Then $\alpha = h_{vol}$ and $B(-v) = U^u(v)$ for almost every $v \in SM$ with respect to the Liouville measure.    
\end{proposition}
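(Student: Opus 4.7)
My plan is to split the proof in two stages: first establish the pointwise a.e.\ identity $B(-v) = U^u(v)$, and then deduce $\alpha = h_{vol}$.

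For the pointwise identity, I would work along a geodesic $\gamma_v$ with the symmetric endomorphisms $E(t) := U^u(g^tv)$ and $A(t) := B(-g^tv)$ of $\gamma_v'(t)^\perp$. Proposition~\ref{prop:riccati_basic}(4) gives $E' + E^2 + R(t) = 0$, and a short reparametrization argument (apply the Riccati equation for $-B$ along $\gamma_{-v}$ and substitute $s = -t$) shows that $A$ satisfies the same equation $A' + A^2 + R(t) = 0$. Since $A \leq E$ by Lemma~\ref{lem:ineq1}, $D := E - A$ is positive semi-definite. Subtracting the two Riccati equations and setting $S := E + A$,
\begin{align*}
D' = A^2 - E^2 = -\tfrac{1}{2}(SD + DS),
\end{align*}
so $(\tr D)' = -\tr(SD) \leq 0$, since both $S$ and $D$ are positive semi-definite symmetric. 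Thus $t \mapsto \tr D(g^tv)$ is non-increasing along every orbit.

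Because $M$ is compact, the Liouville measure on $SM$ is finite and $g^t$-invariant, and Poincar\'e recurrence gives that for Liouville-a.e.\ $v$ there exist $t_n \to \infty$ with $g^{t_n}v \to v$. Combining continuity of $v \mapsto D(v)$ (from Theorem~\ref{thm:cont} together with standard continuity of the stable and unstable Riccati solutions) with monotonicity forces $\tr D(g^tv) \equiv \tr D(v)$ for all $t \in \RR$, so $\tr(SD) = 0$ a.e. Since $S, D$ are symmetric positive semi-definite, $\tr(SD) = \tr(S^{1/2} D S^{1/2}) = 0$ forces $S^{1/2} D S^{1/2} = 0$ and hence $SD = 0$. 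Then $\mathrm{range}(D) \subseteq \ker(S) = \ker(A) \cap \ker(E)$ (the last equality uses $A, E \geq 0$), so $A(Dw) = E(Dw) = 0$ for every $w$, whence $D(Dw) = (E - A)(Dw) = 0$. Thus $D^2 = 0$, and since $D$ is symmetric, $D = 0$; that is, $B(-v) = U^u(v)$ for Liouville-a.e.\ $v$.

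For $\alpha = h_{vol}$, the function $f(x) := e^{-\alpha b_v(x)/2}$ is positive and $C^2$ on $X$ with $-\Delta f = (\alpha^2/4)\, f$ (using $|\nabla b_v| = 1$ and $\Delta b_v = \alpha$), so Cheng-Sullivan's characterization of the bottom of the spectrum gives $\alpha^2/4 \leq \lambda_{\min}(X)$; combined with (\ref{eq:inq_vol_eigen}) this yields $\alpha \leq h_{vol}$. For the reverse inequality, the first stage gives $\tr U^u(v) = \alpha$ Liouville-a.e., hence $\tfrac{1}{T}\log \det \JJ^u_v(T) \to \alpha$ for Liouville-a.e.\ $v$; a theorem of Freire-Ma\~n\'e identifying this Birkhoff-type limit with $h_{vol}$ for compact manifolds without conjugate points then gives $h_{vol} = \alpha$. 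The core of the argument is the Poincar\'e recurrence step, which converts the one-sided inequality of Lemma~\ref{lem:ineq1} into a pointwise equality via the matrix-algebra identity above; the main obstacle is the entropy identity in the final stage, which relies on substantive external input from the theory of geodesic flows on manifolds without conjugate points.
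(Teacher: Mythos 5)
Your argument splits into two stages whose logical order is opposite to the paper's, and the swap creates two gaps.

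In Stage 1, the step $(\tr D)' = -\tr(SD) \leq 0$ requires $S = U^u(v) + B(-v)$ to be positive semi-definite, which in turn requires $U^u(v) \geq 0$ and $B(-v) \geq 0$. These hold when $M$ has no focal points (there $U^u \geq 0$ and $B(-v) = U^u(v)$), but Proposition~\ref{prop:asym_constant} assumes only no conjugate points, and in that generality neither $U^u$ nor the Busemann Hessian need be positive semi-definite. The two Riccati equations give no sign information on $S$; if $S$ is indefinite then $\tr(SD)$ can be negative even with $D \geq 0$, so the monotonicity of $\tr D$ along orbits fails, as does the later matrix step $\ker S = \ker A \cap \ker E$. (You also invoke ``standard continuity of the stable and unstable Riccati solutions''; continuity of the Green bundles is known to fail for general compact manifolds without conjugate points. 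That secondary issue could be repaired by replacing Poincar\'e recurrence with an averaging argument, using that $\int_{SM}(\tr D)'\,d\lambda = 0$ automatically by flow invariance, but it does not help with the sign of $S$.)

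In Stage 2, the asserted Freire--Ma\~n\'e identification of the Birkhoff limit $\tfrac{1}{T}\log\det\JJ^u_v(T)$ with $h_{vol}$ is not a theorem. Freire and Ma\~n\'e proved $h_{\lambda}(g^t) = \int \tr U^u \, d\lambda$ and $h_{top} = h_{vol}$; together with the variational principle this yields only $\int \tr U^u \, d\lambda \leq h_{vol}$. So even granting $\tr U^u \equiv \alpha$ a.e., you only get $\alpha \leq h_{vol}$ again, the same direction as your spectral bound, and you never obtain $\alpha \geq h_{vol}$.

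The paper avoids both problems by reversing the order. It first proves $\alpha = h_{vol}$ directly, by applying the Ledrappier--Wang integral formula to the leafwise vector field $\nabla^{\WW}\xi$ on the laminated space $X_M$ (using that each $\Delta\xi = \alpha$ and $\norm{\nabla\xi} \equiv 1$). With $\alpha = h_{vol}$ in hand, the chain
\begin{align*}
h_{vol} = \int \tr B(-v)\, d\lambda \leq \int \tr U^u(v)\, d\lambda = h_{\lambda}(g^t) \leq h_{top} = h_{vol}
\end{align*}
forces equality throughout; since $B(-v) \leq U^u(v)$ pointwise (Lemma~\ref{lem:ineq1}) and the traces agree a.e., the operators coincide a.e. This route needs neither positivity of the Riccati solutions nor continuity of $U^u$, which is exactly what makes it work at the stated level of generality.
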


\section{Proof of Theorem~\ref{thm:main1}}

\subsection{Outline of Proof} As mentioned in the introduction the combined works of Foulon and Labourie~\cite{FL92}, Benoist, Foulon, and Labourie~\cite{BFL92}, and Besson, Courtois, and Gallot~\cite{BCG95} imply the following (see Knieper~\cite[Theorem 3.6]{knieper11} for some details).

\begin{theorem}\label{thm:AHanosov}
Suppose $M$ is a compact asymptotically harmonic manifold. If the geodesic flow on $SM$ is Anosov, then $M$ is a rank one locally symmetric space of noncompact type.
\end{theorem}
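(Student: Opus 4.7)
The plan is to assemble the theorem from three deep external inputs, exactly as sketched in the introduction, with the asymptotically harmonic hypothesis being fed into the first input and the Anosov hypothesis supplying the dynamical regularity needed to run the remaining two.

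First, I would invoke the regularity result of Foulon and Labourie~\cite{FL92}. Under the hypotheses that the geodesic flow on $SM$ is a contact Anosov flow (contact is automatic for geodesic flows, via the Liouville form) and that the horospheres have constant mean curvature (which is precisely the asymptotically harmonic condition, since $\Delta b_v$ is the mean curvature of $H_v$), their theorem asserts that the weak stable and weak unstable foliations of $g^t$ on $SM$ are $C^\infty$. The point of this step is to replace a priori merely Hölder-regular invariant foliations by smooth ones, and the asymptotically harmonic hypothesis is exactly the trigger.

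Second, with smooth invariant foliations in hand, I would apply the rigidity theorem of Benoist, Foulon, and Labourie~\cite{BFL92}: a smooth contact Anosov flow on a compact manifold whose weak stable/unstable foliations are $C^\infty$ is $C^\infty$ conjugate to the geodesic flow on a compact locally symmetric space $N$ of strictly negative curvature. In particular $M$ and $N$ share topological entropy, volume growth entropy, and indeed the whole smooth orbit structure of the geodesic flow, though not yet the Riemannian metric.

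Third, to upgrade this conjugacy of geodesic flows to an actual isometry of manifolds, I would appeal to the minimal entropy theorem of Besson, Courtois, and Gallot~\cite{BCG95}: among all metrics of fixed volume on a compact manifold whose universal cover is a negatively curved symmetric space, the locally symmetric one uniquely minimizes the volume growth entropy, and equality characterizes isometry. Since the $C^\infty$ conjugacy from the previous step forces $h_{vol}(M) = h_{vol}(N)$ (after rescaling to common volume), this identifies $M$ with $N$ up to isometry, completing the proof. The only real work here is to check that in Knieper's version the hypotheses of~\cite{FL92} are genuinely satisfied from \emph{Anosov plus asymptotically harmonic} alone rather than from negative curvature — this is the one place where I expect to need to look carefully at the arguments rather than citing a statement verbatim.
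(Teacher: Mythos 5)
Your proposal matches the paper's treatment exactly: the paper does not give an independent proof but assembles the same three ingredients --- Foulon--Labourie smoothness of the invariant foliations (triggered by the asymptotically harmonic hypothesis), the Benoist--Foulon--Labourie rigidity of smooth Anosov geodesic flows, and Besson--Courtois--Gallot minimal entropy --- citing Knieper~\cite[Theorem 3.6]{knieper11} for the verification that Anosov plus asymptotically harmonic (rather than negative curvature) suffices, which is precisely the caveat you flag at the end.
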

 
We will reduce each condition in Theorem~\ref{thm:main1} to the above theorem. Let $M$ be a compact Riemannian manifold. In subsection~\ref{subsec:rankone}, we will show that if $M$ is asymptotically harmonic and there exists a single $v \in SM$ with $\det(B(-v) + B(v)) \neq 0$ then a result of Eberlein implies that the geodesic flow is Anosov. Then by Theorem~\ref{thm:AHanosov} $(M,g)$ must be a locally symmetric space. In subsection~\ref{subsec:no_focal}, we will use a recent generalization due to Watkins~\cite{watkins11} of the Rank Rigidity Theorem. Using this we will show every higher rank asymptotically harmonic manifold without focal points is flat. 

In subsection~\ref{subsec:purelyExp} we will show that purely exponential volume growth implies the existence of a $v \in SM$ with $\det(B(-v) + B(v)) \neq 0$. Finally a theorem of Coornaert~\cite[Theorem 7.2]{coor93} implies that any Gromov hyperbolic space with compact quotient has purely exponential volume growth.

\subsection{The ``rank one'' case:}
\label{subsec:rankone}

We begin by stating a criterium for the geodesic flow being Anosov. Recall that for $v \in T_pM$ we have a canonical splitting 
\begin{align*}
T_v TM = T_p M \oplus T_pM
\end{align*}
where the first factor is the \emph{horizontal distribution} and the second is the \emph{vertical distribution}. The manifold $TM$ has a natural metric coming from this identification:
\begin{align*}
\left\langle (X_1,Y_1),(X_2,Y_2)\right\rangle = g(X_1,X_2)+g(Y_1,Y_2).
\end{align*}
If $v \in SM$, then 
\begin{align*}
T_v SM = \{ (X,Y) : Y \in v^{\bot}\}.
\end{align*}
Next define the \emph{stable and unstable Green subbundles} as
\begin{align*}
E^s(v) &= \{ (X,U^s(v)X) : X \in v^{\bot} \}, \\
E^u(v) &= \{ (X,U^u(v)X) : X \in v^{\bot} \}.
\end{align*}
If the geodesic flow is Anosov, the stable and unstable Green subbundles are in fact the stable and unstable distributions of the geodesic flow and 
\begin{align}
\label{eq:span}
T_v SM = E^s(v) \oplus E^u(v) \oplus Z
\end{align}
where $Z$ is the flow direction. Remarkably, Eberlien proved that if the Green subbundles span the tangent space as in ~\ref{eq:span} then the geodesic flow is Anosov. 

\begin{theorem}\cite{eberlein73I}
\label{thm:anosov}
Let $M$ be a compact Riemannian manifold without conjugate points. Then
$\det (U^u(v)-U^s(v)) \neq 0$ for all $v \in SM$
if and only if the geodesic flow on $SM$ is Anosov.
\end{theorem}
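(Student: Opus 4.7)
The plan is to establish the two implications separately, with the reverse direction being the substantive one. For the forward direction, assume the geodesic flow is Anosov with splitting $TSM = E^s_{\mathrm{An}} \oplus E^u_{\mathrm{An}} \oplus \RR Z$. Any $\xi = (X,Y) \in E^s_{\mathrm{An}}(v)$ is exponentially contracted by $dg^t$, so the corresponding Jacobi field $J$ along $\gamma_v$ with $J(0) = X$ and $J'(0) = Y$ satisfies $\norm{J(t)} \to 0$ as $t \to +\infty$. Lemma~\ref{lem:kerV} then forces $Y = U^s(v)X$, so $\xi \in E^s(v)$. A dimension count gives $E^s_{\mathrm{An}} = E^s$, and symmetrically $E^u_{\mathrm{An}} = E^u$. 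The Anosov transversality $E^s_{\mathrm{An}}(v) \cap E^u_{\mathrm{An}}(v) = \{0\}$ now reads as $\det(U^u(v) - U^s(v)) \neq 0$.

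For the reverse direction, suppose $\det(U^u(v) - U^s(v)) \neq 0$ for every $v \in SM$. I would first note that $E^s$ and $E^u$ are $dg^t$-invariant continuous subbundles: if $J$ is a Jacobi field with $J'(0) = U^s(v)J(0)$ then the Riccati equation in Proposition~\ref{prop:riccati_basic} gives $J'(t) = U^s(g^t v)J(t)$ for all $t$, which is exactly $dg^t$-invariance of $E^s$, and continuity of the Riccati solutions on compact $SM$ is classical. Together with the flow direction $Z$ this yields a continuous, flow-invariant splitting $TSM = E^s \oplus E^u \oplus \RR Z$, the direct-sum property being equivalent to the hypothesis.

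The heart of the argument is promoting pointwise transversality to uniform exponential contraction of $dg^t|_{E^s}$. Set $W(v) := U^u(v) - U^s(v)$. By hypothesis $W(v)$ is symmetric and strictly positive definite for every $v$, depends continuously on $v$, and $SM$ is compact, so there exist constants $c, K > 0$ with $cI \leq W(v) \leq KI$ uniformly. For a stable Jacobi field $J$ along $\gamma_v$ (so $J'(t) = U^s(g^t v)J(t)$), consider
\begin{align*}
q(t) := \ip{W(g^t v)J(t), J(t)}.
\end{align*}
Using the Riccati equations $(U^s)' + (U^s)^2 + R = 0 = (U^u)' + (U^u)^2 + R$ to rewrite $W'(t) = (U^s)^2 - (U^u)^2$, and exploiting symmetry of $U^s$ and $U^u$, a short calculation yields the clean identity
\begin{align*}
q'(t) = -\norm{W(g^t v)J(t)}^2.
\end{align*}
Combining $\norm{WJ}^2 \geq c^2 \norm{J}^2$ with $q \leq K\norm{J}^2$ gives $q' \leq -(c^2/K)q$, hence $q(t) \leq q(0) e^{-(c^2/K)t}$ and therefore $\norm{J(t)}^2 \leq (K/c)\norm{J(0)}^2 e^{-(c^2/K)t}$. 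Together with the uniform bound on $U^s$ (which also controls $\norm{J'(t)} = \norm{U^s(g^t v)J(t)}$), this gives uniform exponential contraction of $dg^t$ on $E^s$; the same argument with $t \mapsto -t$ and $U^u$ in place of $U^s$ gives uniform expansion on $E^u$, establishing the Anosov property.

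The main obstacle I expect is the ambient input that $U^s, U^u$ are uniformly bounded and continuous on $SM$; this rests on Green's construction of the stable and unstable Riccati solutions on a compact manifold without conjugate points, together with standard Jacobi-field comparison estimates using a bound on sectional curvature (available since $M$ is compact). Once that input is in hand, the elegant identity $q' = -\norm{WJ}^2$ reduces the Anosov property to the pointwise nondegeneracy hypothesis in a uniform way.
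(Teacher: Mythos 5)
The paper does not prove this statement itself---it is cited from \cite{eberlein73I}---but the proof of the closely related localized version, Proposition~\ref{prop:hyperbolic_sets}, is described as a condensed form of Ruggiero's proof of Eberlein's theorem, so that is the natural point of comparison.

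Your forward direction is fine, and your reverse-direction computation is a genuinely different route from Ruggiero/Eberlein. The paper's argument (visible in Lemma~\ref{lem:bound2} and the proof of Proposition~\ref{prop:hyperbolic_sets}) is a compactness/subsequence argument: one shows by contradiction that stable Jacobi fields contract uniformly, extracting convergent subsequences and using that the determinant condition forbids non-trivial bounded Jacobi fields along the limit geodesic. Your argument instead introduces the Lyapunov functional $q(t) = \ip{W(g^tv)J(t),J(t)}$ with $W = U^u - U^s$, and the identity $q'(t) = -\norm{W(g^tv)J(t)}^2$ is indeed correct (using both Riccati equations and symmetry of $U^s$, $U^u$); this is an attractive quadratic-form approach.

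However, there is a real gap exactly where you suspect it. For the differential inequality $q' \leq -(c^2/K)q$ you need a uniform lower bound $W(v) \geq cI$ on all of $SM$, and you obtain it by asserting that continuity of the Green--Riccati solutions $U^s$, $U^u$ on a compact base is classical. It is not. What is classical is the uniform upper bound $\norm{U^s}, \norm{U^u} \leq \kappa$ (from a curvature lower bound) and the semicontinuity coming from the monotone limits $U^s_T \nearrow U^s$ and $U^u_T \searrow U^u$: the quadratic form of $U^s$ is lower semicontinuous, that of $U^u$ is upper semicontinuous, so $\ip{W(v)w,w}$ is merely upper semicontinuous in $v$. Upper semicontinuity is the wrong direction to conclude that a positive function on a compact set is bounded away from zero. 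In general compact manifolds without conjugate points the Green subbundles need not be continuous at all, and establishing their continuity (equivalently, extracting the uniform lower bound on $W$) is itself one of the conclusions that falls out of Eberlein's theorem. This is why the published proofs run the other way: the subsequence argument establishes uniform contraction of stable Jacobi fields directly, without assuming a uniform spectral gap for $W$, and continuity of $E^s$, $E^u$ is obtained afterwards as a byproduct of the Anosov property. Your Lyapunov scheme would be rigorous if you first proved $\inf_{v\in SM}\lambda_{\min}(W(v))>0$ by a compactness argument of the type used in Lemma~\ref{lem:bound2}---but at that point you have essentially reproduced the hard step of the standard proof and the Lyapunov functional only streamlines the final passage from ``contraction'' to ``uniform exponential contraction.''
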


Assuming the curvature is bounded below and the spectrum of $U^u(v)-U^s(v)$ is uniformly bounded below by a positive number, Bolton~\cite{bolton79} later proved Theorem~\ref{thm:anosov} without the compactness assumption.

In this subsection we will prove the following.

\begin{proposition}\label{prop:rank_one}
Suppose $M$ is a compact asymptotically harmonic manifold. If there exists a $v \in SM$ with 
$
\det (B(-v)+B(v) )\neq 0
$
then for all $v \in SM$, $\det(U^u(v)-U^s(v)) \neq 0$. Hence the geodesic flow is Anosov and $M$ is a rank one locally symmetric space of noncompact type.
\end{proposition}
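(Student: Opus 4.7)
The plan is to propagate the ``rank one'' property from the single vector provided by the hypothesis to all of $SM$ via $g^t$-invariance, and then invoke Theorem~\ref{thm:anosov} together with Theorem~\ref{thm:AHanosov}. By the continuity of $v\mapsto B(v)$ (a consequence of Theorem~\ref{thm:cont}), the set
\[
\mathcal{B} := \{v \in SM : \det(B(-v)+B(v))\neq 0\}
\]
is open and nonempty by hypothesis. The pointwise inequality $U^u(v)-U^s(v)\geq B(-v)+B(v)$ from Lemma~\ref{lem:ineq1}, combined with the monotonicity of $\det$ on the cone of positive semidefinite symmetric matrices, gives $\mathcal{B}\subseteq\mathcal{A}:=\{v\in SM:\det(U^u(v)-U^s(v))\neq 0\}$.

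The next step is to verify that $\mathcal{A}$ is $g^t$-invariant. Subtracting the two Riccati equations of Proposition~\ref{prop:riccati_basic}(4), the difference $D(t):=U^u(g^tv)-U^s(g^tv)$ satisfies the linear matrix ODE
\[
D'(t) = -U^u(g^tv)\,D(t) - D(t)\,U^s(g^tv),
\]
whose coefficients are smooth functions of $t$ along any single geodesic. Writing $D(t)=P(t)\,D(0)\,Q(t)$ with $P,Q$ fundamental solutions of the associated linear systems (in particular invertible for every $t$), we conclude that $\det D(t)\neq 0$ if and only if $\det D(0)\neq 0$, giving the $g^t$-invariance of $\mathcal{A}$. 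Consequently $\bigcup_{t\in\RR}g^t(\mathcal{B})\subseteq\mathcal{A}$ is an open, $g^t$-invariant subset of $SM$.

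To finish I would leverage Proposition~\ref{prop:asym_constant}, which gives $\alpha=h_{vol}$ and $B(-v)=U^u(v)$ Liouville-almost everywhere; hence $\tr U^u\equiv\alpha$ and $\tr U^s\equiv-\alpha$ on a set of full Liouville measure. Because along any individual orbit the map $t\mapsto\tr U^u(g^tv)+\tr U^s(g^tv)$ is smooth, the a.e.\ vanishing forces the identity $\tr U^u+\tr U^s\equiv 0$ along every orbit that meets the full-measure set, and Liouville's formula then yields $\det D(t)=\det D(0)$ on these orbits. In this way $\det(U^u-U^s)$ becomes a $g^t$-invariant, nonnegative function on a large open invariant set containing $\mathcal{B}$, on which it is already bounded below by the positive continuous function $\det(B(-v)+B(v))$.

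The main obstacle is the remaining step: ruling out a nonempty closed $g^t$-invariant subset of $SM$ on which $\det(U^u-U^s)$ vanishes. Absent further input this does not follow from continuity and $g^t$-invariance alone, so I expect the proof to require either an ergodicity statement for the Liouville measure (so that any $g^t$-invariant continuous function is constant a.e., hence everywhere by full support), or a Hopf-type local-to-global argument on $\mathcal{A}$ exploiting the Anosov-like splitting given by the Green subbundles (whose continuity should come from upgrading Proposition~\ref{prop:asym_constant} to an everywhere identity $U^u(v)=B(-v)$, $U^s(v)=-B(v)$ using semicontinuity of the monotone limit $U^s=\sup_T U^s_T$ from Proposition~\ref{prop:riccati_basic}(2)). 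Once $\det(U^u(v)-U^s(v))\neq 0$ is verified for every $v\in SM$, Theorem~\ref{thm:anosov} yields that the geodesic flow on $SM$ is Anosov, and Theorem~\ref{thm:AHanosov} concludes that $M$ is a rank one locally symmetric space of noncompact type.
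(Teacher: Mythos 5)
You correctly identify the structure of the argument (propagate the rank-one property from one vector to all of $SM$, then invoke Theorems~\ref{thm:anosov} and~\ref{thm:AHanosov}) and you correctly isolate the remaining obstacle: ruling out a nonempty closed $g^t$-invariant subset on which the relevant determinant vanishes. But you explicitly leave that obstacle open, and that is exactly where the paper's real content lies, so the proposal has a genuine gap.

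Two more specific problems. First, you work with $\det(U^u(v)-U^s(v))$, whereas the paper works with $\det V(v)=\det(B(-v)+B(v))$. This is not a cosmetic choice: by Theorem~\ref{thm:cont} the map $v\mapsto B(v)$ is continuous, so $\det V$ is a continuous flow-invariant function on $SM$ (Lemma~\ref{lem:inv}). By contrast, for manifolds with no conjugate points the Riccati solutions $U^s,U^u$ are in general only measurable in $v$ (they are monotone limits), so your function $\det(U^u-U^s)$ is not obviously continuous, and every step of your argument that relies on continuity (openness of $\mathcal{A}$, a Hopf-type local-to-global argument) is therefore undercut. Second, your two proposed ways to close the gap do not suffice as stated: ergodicity of the Liouville measure is not available for a general compact manifold without conjugate points, and the ``Hopf-type'' argument you gesture at requires establishing a hyperbolic splitting on the relevant set, which you have not done.

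The paper's missing ingredient is Proposition~\ref{prop:hyperbolic_sets}: for each $\epsilon>0$ the level set $\mathcal{C}_\epsilon=\{v:\det V(v)\ge\epsilon\}$ is a (compact, $g^t$-invariant) hyperbolic set for the geodesic flow. Its proof is a compactness/contradiction argument (Lemmas~\ref{lem:bound} and~\ref{lem:bound2}) establishing uniform decay of stable Jacobi fields over $\mathcal{C}_\epsilon$. Once that is in hand, the paper applies a shadowing-type fact (Proposition~6.4.13 of~\cite{KH95}) to two nearby points $v,w\in\mathcal{C}_\epsilon$ to produce $u$ forward-asymptotic to $v$ and backward-asymptotic to $w$; continuity and $g^t$-invariance of $\det V$ then force $\det V(v)=\det V(u)=\det V(w)$, so $\det V$ is locally constant on $\mathcal{C}=\{\det V>0\}$, hence (by second-countability of $SM$) globally constant and nonzero. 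This is the step that eliminates your possible ``closed invariant vanishing set.'' With $\det V\neq 0$ everywhere, Lemma~\ref{lem:rank_defn} gives $\det(U^u-U^s)\neq0$ everywhere, and the two quoted theorems finish the proof. Your linear-ODE observation that $\mathcal{A}$ is $g^t$-invariant is fine and parallel to Lemma~\ref{lem:inv}, but without the hyperbolicity of $\mathcal{C}_\epsilon$ you cannot propagate positivity off the orbit of a single vector.
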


\begin{remark}
Recall that $B(v) \in \text{End}(v^{\bot})$ is the endomorphism obtained by lifting $v \in S_pM$ to $\wt{v} \in S_{\wt{p}}X$ and considering the map $B(\wt{v}):\wt{v}^{\bot} \rightarrow  \wt{v}^{\bot}$ given by $Y \rightarrow \nabla_Y \nabla b_{\wt{v}}(\wt{p})$. As $\nabla_{\wt{v}} \nabla b_{\wt{v}}(\wt{p}) =0$, $\det( B(-v)+B(v)) \neq 0$ if and only if the endomorphism $\nabla^2 b_{-\wt{v}}(\wt{p})+\nabla^2 b_{\wt{v}}(\wt{p}) : T_{\wt{p}} X \rightarrow T_{\wt{p}} X$ has corank one.
\end{remark}

As in~\cite{HKS07,knieper11, SS08} we consider the map 
\begin{align*}
v \in SM \rightarrow V(v)=B(-v)+B(v)\in \text{End}(v^{\bot}).
\end{align*}
Let $g^t:SM \rightarrow SM$ be the geodesic flow on $SM$. Because the (1,1)-tensors $t \rightarrow \mp B(\pm g^t v)$ solve the Riccati equation~\ref{eq:riccati}, for any $v \in SM$ the (1,1)-tensor $t \rightarrow V(g^t v)$ satisfies the differential equation
\begin{align*}
V^\prime = XV + VX
\end{align*}
where $X(t) = -\frac{1}{2}(B(-v)-B(v))$. 

We will next show that $v \rightarrow \det V(v)$ is invariant under the geodesic flow. The following lemma is given in~\cite{HKS07,SS08}, but for completeness we include the short proof.

\begin{lemma}
\cite{HKS07}
\label{lem:inv}
Let $M$ be an asymptotically harmonic manifold. Then the map $v \rightarrow \det(V(v))$ is invariant under the geodesic flow.
\end{lemma}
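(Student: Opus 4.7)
The plan is to exploit the linear ODE $V' = XV + VX$ satisfied by $t \mapsto V(g^tv)$ (viewed, via parallel transport, as a path of symmetric endomorphisms of $v^\perp$), together with the fact that $X$ is symmetric and trace-free.

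First I would introduce the fundamental matrix solution: let $\Phi(t) \in \text{End}(v^\perp)$ solve the linear ODE
\begin{align*}
\Phi'(t) = X(t)\, \Phi(t), \qquad \Phi(0) = I,
\end{align*}
where $X(t) = -\tfrac12\bigl(B(-g^tv) - B(g^tv)\bigr)$. Since each $B(\pm g^tv)$ is symmetric, $X(t)$ is symmetric, so $X(t)^T = X(t)$. A direct computation then shows that $t \mapsto \Phi(t)\, V(v)\, \Phi(t)^T$ satisfies
\begin{align*}
\bigl(\Phi V(v) \Phi^T\bigr)' = X\Phi V(v)\Phi^T + \Phi V(v)\Phi^T X^T = X\bigl(\Phi V(v)\Phi^T\bigr) + \bigl(\Phi V(v)\Phi^T\bigr) X,
\end{align*}
with the same initial value $V(v)$ at $t=0$ as the path $t \mapsto V(g^tv)$. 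By uniqueness of solutions to the linear ODE $V' = XV + VX$, we conclude
\begin{align*}
V(g^tv) = \Phi(t)\, V(v)\, \Phi(t)^T.
\end{align*}

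Taking determinants gives $\det V(g^tv) = (\det \Phi(t))^2 \det V(v)$, so it suffices to show $\det \Phi(t) = \pm 1$. By the Liouville/Jacobi formula applied to $\Phi' = X\Phi$,
\begin{align*}
\det \Phi(t) = \exp\!\left(\int_0^t \tr X(s)\, ds\right).
\end{align*}
Now the trace-free part is crucial: since $\tr B(w) = \Delta b_{\wt{w}}(\wt{p}) = \alpha$ for every $w \in SM$,
\begin{align*}
\tr X(s) = -\tfrac12\bigl(\tr B(-g^sv) - \tr B(g^sv)\bigr) = -\tfrac12(\alpha - \alpha) = 0.
\end{align*}
Thus $\det \Phi(t) \equiv 1$, giving $\det V(g^tv) = \det V(v)$ for all $t \in \RR$.

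This argument is essentially a one-line computation once the right framework is set up; the only subtlety is ensuring the identification of the ODE $V' = XV + VX$ as a symmetric conjugation, which lets us bypass the problem that $(\det V)' = \det(V)\tr(V^{-1}V')$ is only valid where $V$ is invertible. I expect no real obstacle — the fact that the asymptotically harmonic hypothesis manifests exactly as $\tr B(v) = \alpha$ being independent of $v$, killing the trace of $X$, is what makes the determinant invariant.
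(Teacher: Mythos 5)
Your proof is correct, and it takes a cleaner route than the paper's. The paper differentiates $\log\det V$ directly along the flow, obtaining $\frac{d}{dt}\log\det V = \tr\big((XV+VX)V^{-1}\big) = 2\tr X = 0$; this requires first reducing to the case $\det V(v)\neq 0$ (with an implicit open-and-closed continuation argument, since $\log\det V$ is only defined where $V$ is invertible). You instead solve the Lyapunov-type linear ODE $V' = XV+VX$ explicitly by introducing the fundamental solution $\Phi$ of $\Phi'=X\Phi$ and showing, via symmetry of $X(t)$ and uniqueness for linear ODEs, that $V(g^tv)=\Phi(t)V(v)\Phi(t)^T$; Liouville's formula together with $\tr X\equiv 0$ then gives $\det\Phi\equiv 1$, so $\det V(g^tv)=\det V(v)$ with no invertibility hypothesis on $V(v)$. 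Both proofs ultimately hinge on the same consequence of the asymptotically harmonic hypothesis, namely $\tr B(\pm g^tv)\equiv\alpha$, hence $\tr X\equiv 0$; yours packages it in a way that avoids the degenerate case entirely. One small point worth making explicit: the identification of $t\mapsto V(g^tv)$ with a path of symmetric operators on the fixed space $v^\perp$ uses parallel transport along $\gamma_v$, which preserves the metric and hence symmetry, so the ODE for the trivialized path really is $V'=XV+VX$ with $X(t)$ symmetric on $(v^\perp,g)$.
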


\begin{proof}
For $v \in SM$ we will show that the function $t \in \RR \rightarrow \det(V(g^t v))$ is constant. It is enough to prove this for those $v \in SM$ with $\det(V(v)) \neq 0$. In this case let $V(t) = V(g^t v)$, we then obtain
\begin{align*}
\frac{d}{dt} \log \det V = \tr \dot{V} V^{-1}=\tr(XV+VX)V^{-1} =2\tr X  =0
\end{align*}
as  $\tr B(w) = \Delta b_{\wt{w}} \equiv \alpha$ for any $w \in SM$.
\end{proof}

\begin{proposition}
\label{prop:hyperbolic_sets}
Let $M$ be a compact asymptotically harmonic manifold. If the set
\begin{align*}
\mathcal{C}_\epsilon= \{ v \in SM : \det V(v) \geq \epsilon\}
\end{align*} 
is nonempty, then $\mathcal{C}_{\epsilon}$ is a hyperbolic set for the geodesic flow: there exists $C,\lambda>0$ such that for all $v \in \mathcal{C}_{\epsilon}$, 
\begin{align*}
T_v SM = E^s(v) \oplus E^u(v) \oplus Z
\end{align*}
where $Z$ is the flow direction and 
\begin{align*}
\norm{D(g^t)_v W } & \leq C e^{-\lambda t}\norm{W} \text{ for $t \geq 0$ and $W \in E^s(v)$} \\ 
\norm{D(g^t)_v W } & \leq C e^{-\lambda t}\norm{W} \text{ for $t \leq 0$ and $W \in E^u(v)$}
\end{align*}
\end{proposition}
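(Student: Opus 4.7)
My plan is to verify hyperbolicity on $\mathcal{C}_\epsilon$ by combining a positivity analysis of $V(v) = B(-v) + B(v)$ with an orbit-wise adaptation of the argument behind Eberlein's Theorem~\ref{thm:anosov}. First I would check that $\mathcal{C}_\epsilon$ is compact and flow-invariant. By Theorem~\ref{thm:cont}, the map $v \mapsto B(v)$ is continuous on $SM$, so $v \mapsto \det V(v)$ is continuous and $\mathcal{C}_\epsilon$ is closed in the compact space $SM$; Lemma~\ref{lem:inv} provides invariance under $g^t$.

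Next I would extract a uniform positive lower bound $V(v) \geq c_0 I$ on $\mathcal{C}_\epsilon$. The endomorphism $V(v)$ is symmetric and positive semi-definite on $v^{\bot}$ by Lemma~\ref{lem:Buse_ineq}, with constant trace $\tr V(v) = 2\alpha$, while $\det V(v) \geq \epsilon$ on $\mathcal{C}_\epsilon$. An elementary AM--GM estimate on the eigenvalues of $V(v)$ then yields a uniform lower bound $\lambda_{\min}(V(v)) \geq c_0 > 0$ depending only on $\epsilon$, $\alpha$, and $\dim M$; in particular $\mathcal{C}_\epsilon$ can only be nonempty when $\alpha > 0$. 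Applying Lemma~\ref{lem:rank_defn} then gives
\begin{align*}
U^u(v) - U^s(v) \geq V(v) \geq c_0 I \quad \text{on } \mathcal{C}_\epsilon,
\end{align*}
from which $E^s(v) \cap E^u(v) = \{0\}$; a dimension count then yields the desired splitting $T_v SM = E^s(v) \oplus E^u(v) \oplus Z$ for every $v \in \mathcal{C}_\epsilon$.

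To promote this transversality to uniform exponential contraction and expansion I would adapt the argument behind Eberlein's Theorem~\ref{thm:anosov}. That argument is essentially orbit-wise: if along a geodesic the spectrum of $U^u - U^s$ is uniformly bounded away from zero and the curvature tensor is uniformly bounded, then the Riccati equations satisfied by $U^s$ and $U^u$ together with the Jacobi-field characterizations in Lemma~\ref{lem:kerV} force stable Jacobi fields to decay exponentially and unstable Jacobi fields to grow exponentially, with rates depending only on these two bounds. Flow-invariance of $\mathcal{C}_\epsilon$ ensures the first bound holds uniformly along every orbit in $\mathcal{C}_\epsilon$, compactness of $M$ gives the second, and compactness of $\mathcal{C}_\epsilon$ supplies the uniform constants $C$ and $\lambda$ appearing in the statement.

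The main obstacle I expect is precisely this last step. Eberlein's original theorem assumes $\det(U^u - U^s) \neq 0$ globally on $SM$, whereas I only have a positive spectral lower bound on the invariant subset $\mathcal{C}_\epsilon$. Fortunately the key estimates in his argument are orbit-wise and depend only on uniform spectral and curvature bounds, so the proof should localize to any closed invariant set with these properties; carefully verifying this localization is where most of the work lies.
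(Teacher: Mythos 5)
Your overall strategy --- localize Eberlein's criterion to the compact, flow-invariant set $\mathcal{C}_\epsilon$ --- is exactly the route the paper takes (it explicitly describes its proof as a condensed version of Ruggiero's proof of Eberlein's theorem). Your preliminary steps are also correct: compactness and invariance of $\mathcal{C}_\epsilon$ follow from Theorem~\ref{thm:cont} and Lemma~\ref{lem:inv}; the AM--GM observation that $\tr V(v) = 2\alpha$ and $\det V(v)\geq\epsilon$ force $V(v)\geq c_0 I$ (and $\alpha>0$) is fine; and the operator inequality $U^u(v)-U^s(v)\geq V(v)$ (which comes from Lemmas~\ref{lem:Buse_ineq} and \ref{lem:ineq1}, used inside the proof of Lemma~\ref{lem:rank_defn} rather than its statement) gives the splitting $T_vSM = E^s(v)\oplus E^u(v)\oplus Z$ on $\mathcal{C}_\epsilon$.

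The gap is that you stop precisely where the paper's proof actually begins. ``Carefully verifying this localization'' is not a deferrable technicality; it is the substance of the proposition, and your spectral bound $U^u-U^s\geq c_0 I$ is not what the paper uses to carry it out. What the paper actually uses is the qualitative consequence of $\det V(v)\geq\epsilon$: along any orbit in $\mathcal{C}_\epsilon$, every bounded Jacobi field orthogonal to the flow vanishes (via Lemmas~\ref{lem:kerV} and \ref{lem:rank_defn}). From this it extracts, by compactness-and-contradiction arguments, two uniform Jacobi field estimates on $\mathcal{C}_\epsilon$ (a local derivative bound $\norm{J'(t)}\leq\kappa\norm{J(t)}$ and a two-sided bound $\norm{J(t)}\leq L\norm{J(T)}$ for $t\in[0,T]$ when $J(0)=0$). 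It then defines $f(t)=\sup\{\norm{D(g^t)_v V}: v\in\mathcal{C}_\epsilon,\,V\in E^s(v),\,\norm{V}=1\}$, proves $f(t)\to 0$ by another compactness-contradiction limit argument, and finishes with submultiplicativity $f(t+s)\leq f(t)f(s)$ together with the a priori growth bound $f(t)\leq\sqrt{1+\kappa^2}\,e^{\kappa t}$ to manufacture $C,\lambda$. None of these three steps is sketched in your proposal, and it is in them --- not in the spectral lower bound, which the paper never quotes --- that compactness of $\mathcal{C}_\epsilon$ is actually exploited. To complete your plan you would either reproduce these compactness arguments restricted to $\mathcal{C}_\epsilon$, or cite and check the hypotheses of Bolton's quantitative variant of Eberlein's theorem (which does use a uniform spectral lower bound and a curvature bound, and which the paper mentions but does not invoke); either way the hard work remains to be done.
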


Assuming Proposition~\ref{prop:hyperbolic_sets} we can prove Proposition~\ref{prop:rank_one}:

\begin{proof}[Proof of Proposition~\ref{prop:rank_one}] We claim that for each $\epsilon >0$ the function $v \rightarrow\det V(v)$ is constant on each component of $\mathcal{C}_\epsilon$. By Proposition~\ref{prop:hyperbolic_sets}, $\mathcal{C}_{\epsilon}$ is a hyperbolic set for the geodesic flow $g^t$. Using Proposition 6.4.13 in~\cite{KH95} for each $v,w \in \mathcal{C}_\epsilon$ sufficiently close there exists $u \in SM$ such that 
\begin{align*}
d_{SM}(g^tv,g^tu) \rightarrow 0 \text{ and } d_{SM}(g^{-t}w,g^{-t}u) \rightarrow 0 \text{ as $t \rightarrow \infty$ }.
\end{align*}
Now as $SM$ is compact and $v \rightarrow \det V(v)$ is continuous and invariant under the geodesic flow $\det V(v) = \det V(u) = \det V(w)$. 

By hypothesis, the set $\mathcal{C}=\{ v \in SM : \det V(v) >0\}$ is nonempty. Using the above argument, we see that $v \rightarrow \det V(v)$ is locally constant on $\mathcal{C}$. As $SM$ is a second-countable space, this shows that $v \rightarrow \det V(v)$ can take on at most countable many values which means it must be constant and nonzero. So $V(v)=\det(B(-v)+B(v)) \neq 0$ for all $v \in SM$ so by Lemma~\ref{lem:rank_defn} $\det(U^u(v)-U^s(v)) \neq 0$  for all $v \in SM$, so by Theorem~\ref{thm:anosov} the geodesic flow is Anosov.  
\end{proof}

The proof of Proposition~\ref{prop:hyperbolic_sets} is essentially a condensed version of a proof due to Ruggiero~\cite[Chapter 3]{ruggiero07} of Eberlein's Theorem. 

Fix a compact asymptotically harmonic manifold $M$, a constant $\epsilon >0$, and the set $\C_{\epsilon} =\{ v \in SM : \det V(v)\geq \epsilon\}$. 

\begin{lemma}
With the notation above, $\C_\epsilon$ is compact and $g^t$ invariant. Further for any $v  \in \C_\epsilon$ the vector space of bounded Jacobi fields along $\gamma_v$ has dimension one and is spanned by the Jacobi field $t \rightarrow \gamma_v^\prime(t)$.
\end{lemma}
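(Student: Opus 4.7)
\emph{Proof plan.} The statement splits into three distinct claims; I would handle them in roughly increasing order of substance, and in each case the heavy lifting has already been done by the results established earlier in the paper.

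First I would dispatch \emph{compactness} and \emph{flow-invariance} simultaneously. The map $v \mapsto V(v)=B(-v)+B(v)$ is continuous: by Theorem~\ref{thm:cont} the assignment $v \mapsto b_{\widetilde v}$ is continuous into $C^\infty(X)$, so $v \mapsto \nabla^2 b_{\widetilde v}(\widetilde p)$ (and hence $v \mapsto B(v)$) is continuous. Therefore $v \mapsto \det V(v)$ is continuous on $SM$, and $\C_\epsilon$ is the preimage of the closed set $[\epsilon,\infty)$ under a continuous function on the compact space $SM$; it is closed and compact. For invariance, Lemma~\ref{lem:inv} already tells us that $v \mapsto \det V(v)$ is constant along geodesic orbits, so $g^t(\C_\epsilon)\subset \C_\epsilon$ for every $t$.

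The substantive statement is the one about bounded Jacobi fields. Given $v\in\C_\epsilon$, we have $\det V(v)\ge\epsilon>0$, and Lemma~\ref{lem:rank_defn}(2) upgrades this to $\det(U^u(v)-U^s(v))\neq 0$. The plan is now to exploit the standard decomposition of a Jacobi field $J$ along $\gamma_v$ into a tangential part and a normal part: write
\begin{align*}
J(t) = (at+b)\gamma_v^\prime(t) + J^\bot(t),
\end{align*}
where $J^\bot(t)$ is orthogonal to $\gamma_v^\prime(t)$ and is itself a Jacobi field. If $J$ is bounded on $\RR$, then so is $J^\bot$ (the decomposition is orthogonal), and bounded\-ness of the tangential coefficient $at+b$ forces $a=0$, i.e., the tangential part is a constant multiple of $\gamma_v^\prime(t)$. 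It therefore suffices to show that every bounded \emph{normal} Jacobi field vanishes identically.

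For this, I would invoke Lemma~\ref{lem:kerV}: since $M$ is compact its sectional curvatures are bounded below, so that lemma applies. Boundedness of $J^\bot$ on $[0,\infty)$ yields $(J^\bot)^\prime(0)=U^s(v)J^\bot(0)$, while boundedness on $(-\infty,0]$ yields $(J^\bot)^\prime(0)=U^u(v)J^\bot(0)$. Subtracting gives $(U^u(v)-U^s(v))J^\bot(0)=0$, and since this endomorphism is invertible we conclude $J^\bot(0)=0$ and consequently $(J^\bot)^\prime(0)=0$, so $J^\bot\equiv 0$ by uniqueness of Jacobi fields with prescribed initial data. Thus the space of bounded Jacobi fields is spanned by $t\mapsto \gamma_v^\prime(t)$ and is one-dimensional.

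I do not anticipate any serious obstacle: every ingredient — continuity of $v\mapsto B(v)$, invariance of $\det V$ under $g^t$, the implication from $\det V(v)\neq 0$ to $\det(U^u(v)-U^s(v))\neq 0$, and Eberlein's characterization of stable/unstable Riccati solutions via bounded Jacobi fields — has been set up in the preliminaries. The only small point to watch is the split into tangential and normal parts, so that the conclusion "spanned by $\gamma_v^\prime$'' (rather than merely ``the normal part is zero'') comes out correctly.
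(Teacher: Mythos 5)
Your proposal is correct and follows the same route as the paper: continuity of $v\mapsto\det V(v)$ from Theorem~\ref{thm:cont} gives compactness, Lemma~\ref{lem:inv} gives flow-invariance, and Lemma~\ref{lem:rank_defn} together with Lemma~\ref{lem:kerV} handles the bounded Jacobi field claim. You merely spell out the tangential/normal decomposition and the invertibility argument that the paper compresses into a single sentence (and states in the remark following Lemma~\ref{lem:rank_defn}).
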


\begin{proof}
Theorem~\ref{thm:cont} implies that $v \rightarrow \det V(v)$ is continuous and hence $\C_\epsilon$ is closed (and compact). The $g^t$ invariance of $\C_\epsilon$ follows immediately from Lemma~\ref{lem:inv}. Finally the last assertion follows from Lemma~\ref{lem:kerV} and Lemma~\ref{lem:rank_defn}.
\end{proof}

If $v \in SM$ and 
\begin{align*}
V=(V_1,V_2) \in T_v SM = \{ (X,Y) : Y \in v^{\bot}\},
\end{align*}
then $D(g^t)_v V = (J(t),J^\prime(t))$ where $J$ is the Jacobi field along $\gamma_v$ with $J(0)=V_1$ and $J^\prime(0)=V_2$. The following lemma is a standard result for compact manifolds, see for instance~\cite[Proposition 2.7]{eberlein73I}.

\begin{lemma}
\label{lem:bound}
With the notation above, there exists $\kappa >0$ such that if $v \in SM$ and $J(t)$ is a Jacobi field along the geodesic $\gamma_v$ with $J(0)=0$ and $g(J^\prime(0),v)=0$ then $\norm{J^\prime(t)} \leq \kappa \coth(\kappa t) \norm{J(t)}$ for $t \geq 0$. In particular, if $J$ is a Jacobi field along the geodesic $\gamma_{v}$ and $J^\prime(0)= U^s(v)J(0)$ then $\norm{J^\prime(t)} \leq \kappa\norm{J(t)}$ for $t \geq 0$
\end{lemma}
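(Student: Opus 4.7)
The plan is to establish a uniform upper bound on the self-adjoint Riccati solution attached to the ``zero-initial'' Jacobi tensor along $\gamma_v$, and then read off both conclusions from this bound. Compactness of $M$ gives $\kappa>0$ with $\abs{K}\leq \kappa^2$, so the curvature operator satisfies $R(t)\geq -\kappa^2 I$ along any geodesic. Passing to the universal cover $X$, fix $v\in SX$ and let $A(t)$ be the Jacobi tensor along $\gamma_v$ with $A(0)=0$ and $A^\prime(0)=I$ on $v^{\bot}$. Absence of conjugate points makes $A(t)$ invertible for every $t>0$, so $U(t):=A^\prime(t)A(t)^{-1}$ is well defined; a standard Wronskian calculation shows $U(t)$ is self-adjoint and satisfies the Riccati equation $U^\prime + U^2 + R(t)=0$ with initial behaviour $U(t) = t^{-1}I + O(t)$ as $t\to 0^+$.

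The main step is to prove $U(t)\leq \kappa\coth(\kappa t)\, I$ for all $t>0$. The model is the scalar function $U_0(t):=\kappa\coth(\kappa t)$, which solves $U_0^\prime + U_0^2 = \kappa^2$ with the same $t^{-1}$ singularity. Setting $W(t)=U_0(t)I - U(t)$, and using that $U_0 I$ commutes with $U$, a direct computation yields
\begin{align*}
W^\prime = (\kappa^2 I + R(t)) - 2U_0 W + W^2.
\end{align*}
Since the inhomogeneous term is positive semidefinite and $W(t)\to 0$ as $t\to 0^+$, a maximum principle argument for symmetric operators (perturb $W$ by $\varepsilon t\, I$ to make the curvature term strictly positive, observe that at a first null eigenvector $w$ of the perturbed operator the pairing of the identity against $w$ forces $\ip{W^\prime w, w}>0$, contradicting the minimality of that time, and then let $\varepsilon\to 0$) produces $W(t)\geq 0$. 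This Riccati/maximum-principle step is the technical heart and the place I expect to work hardest, since the curvature bound is not strict and the singularity at $t=0$ must be handled with care.

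With the bound $U(t)\leq \kappa\coth(\kappa t)\, I$ in hand, the first inequality is immediate: uniqueness of Jacobi fields gives $J(t)=A(t)J^\prime(0)$, hence $J^\prime(t)=U(t)J(t)$, and the operator bound gives $\norm{J^\prime(t)}\leq \kappa\coth(\kappa t)\norm{J(t)}$. For the ``in particular'' claim, apply the first inequality to the reversed Jacobi tensor $K(s):=\JJ_{v,T}(T-s)$ along $\gamma_{-g^T v}$, which satisfies $K(0)=0$; evaluating at $s=T$ gives $\norm{U^s_T(v) w}\leq \kappa\coth(\kappa T)\norm{w}$ for every $w\in v^{\bot}$. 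Sending $T\to\infty$ and invoking Proposition~\ref{prop:riccati_basic} together with $\coth(\kappa T)\to 1$ gives the pointwise bound $\norm{U^s(v)}\leq \kappa$, uniformly in $v\in SM$; applying this at $g^t v$ and using $J^\prime(t)=U^s(g^t v)J(t)$ finishes the proof.
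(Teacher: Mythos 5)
Your time-reversal argument for the ``in particular'' statement (running $\JJ_{v,T}$ backwards, applying the first inequality at time $T$, and letting $T\to\infty$ so that $\coth(\kappa T)\to 1$) is fine, and it is essentially the same limiting argument the paper uses; note, however, that the paper does not reprove the first assertion at all -- it simply cites Eberlein~\cite[Proposition 2.7]{eberlein73I} -- so your comparison argument is an attempt at a from-scratch proof of that estimate, and it has a genuine gap at its last step. What your maximum-principle argument establishes (correctly, and it is the standard Riccati comparison) is the one-sided spectral bound $U(t)\leq \kappa\coth(\kappa t)\,I$ for the symmetric operator $U(t)=A'(t)A(t)^{-1}$. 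From this you conclude $\norm{J'(t)}=\norm{U(t)J(t)}\leq\kappa\coth(\kappa t)\norm{J(t)}$, but that inference is false for a symmetric operator: an upper bound on the quadratic form does not bound the operator norm (for instance $U=-100\,I$ satisfies $U\leq\kappa\coth(\kappa t)I$ yet multiplies norms by $100$). You also need a lower spectral bound such as $U(t)\geq-\kappa\,I$, and this is precisely where the absence of conjugate points along the whole forward ray must be used in an essential way -- your upper-bound argument invokes it only to make $A(t)$ invertible.

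The missing piece can be supplied by a blow-down argument. For a unit parallel normal field $w_t$ along $\gamma_v$ set $u(t)=\ip{U(t)w_t,w_t}$; then $u'=-\ip{U^2w_t,w_t}-\ip{R(t)w_t,w_t}\leq\kappa^2-u^2$, using Cauchy--Schwarz ($\ip{U^2w,w}=\norm{Uw}^2\geq u^2$) and $R(t)\geq-\kappa^2 I$. If $u(t_1)<-\kappa$ for some $t_1>0$, comparison with the solution of $w'=\kappa^2-w^2$ starting below $-\kappa$ forces $u$ to reach $-\infty$ in finite time, hence $\norm{U(t)}\to\infty$ at some finite $t_2>t_1$, which would make $A(t_2)$ singular and produce a point conjugate to $\gamma_v(0)$, a contradiction. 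Hence $U(t)\geq-\kappa I\geq-\kappa\coth(\kappa t)I$, and combined with your upper bound this gives $\norm{U(t)}\leq\kappa\coth(\kappa t)$, after which your deduction of the first inequality, and then of the second, goes through.
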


\begin{proof}
The first assertion is~\cite[Proposition 2.7]{eberlein73I}. To see the second assertion, observe that $J$ is the pointwise limit of Jacobi fields $J_T$ with $J_T(0)=J(0)$ and $J_T(T)=0$.
\end{proof}

\begin{lemma}
\label{lem:bound2}
With the notation above, there exists a constant $L>0$ such that whenever $v \in \C_\epsilon$, $T>0$, and $J$ is a Jacobi field along the geodesic $\gamma_{v}$ with $J(0)=0$ and $g(v,J^\prime(0))=0$ then $\norm{J(t)} \leq L\norm{J(T)}$ for $t \in [0,T]$. In particular, if $J$ is a Jacobi field along the geodesic $\gamma_{v}$ and $J^\prime(0)= U^s(v)J(0)$ then $\norm{J(t)} \leq L\norm{J(0)}$ for $t \geq 0$
\end{lemma}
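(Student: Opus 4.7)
The plan is to argue by contradiction, using compactness of $\C_\epsilon$ together with Lemma~\ref{lem:kerV} and the key fact from the previous lemma: a perpendicular Jacobi field bounded on all of $\RR$ along a geodesic $\gamma_w$ with $w \in \C_\epsilon$ must vanish, since $\det(U^u(w)-U^s(w)) \geq \det V(w) \geq \epsilon$ forces $U^u(w)-U^s(w)$ to be invertible.

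Suppose the conclusion fails. Then there exist sequences $v_n \in \C_\epsilon$, $T_n > 0$, perpendicular Jacobi fields $J_n$ along $\gamma_{v_n}$ with $J_n(0) = 0$, and points $t_n \in [0,T_n]$ with $\|J_n(t_n)\|/\|J_n(T_n)\| \to \infty$. Replacing $t_n$ by a maximizer of $\|J_n\|$ on $[0,T_n]$ and rescaling, we may assume $\|J_n(t_n)\| = 1$, $\|J_n\| \leq 1$ on $[0,T_n]$, and $\|J_n(T_n)\| < 1/n$. A Taylor expansion of the fundamental Jacobi tensor (using the uniform curvature bound on compact $M$) yields $\|J_n(t)\| \in [(t/2)\|J_n'(0)\|, 2t\|J_n'(0)\|]$ for $t \in [0,\tau]$, with $\tau > 0$ independent of $n$, from which $t_n \geq \tau/4$. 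Lemma~\ref{lem:bound} then gives a uniform bound $\|J_n'(t_n)\| \leq \kappa\coth(\kappa\tau/4)$.

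Next, translate by the flow: set $\wt{v}_n = g^{t_n}v_n \in \C_\epsilon$ and $\wt{J}_n(s) = J_n(s+t_n)$. Compactness of $\C_\epsilon$ and the uniform bound on $(\wt{J}_n(0), \wt{J}_n'(0))$ produce a subsequence along which $\wt{v}_n \to w \in \C_\epsilon$ and $\wt{J}_n \to \wt{J}$ uniformly on compact sets, where $\wt{J}$ is a perpendicular Jacobi field along $\gamma_w$ with $\|\wt{J}(0)\| = 1$. Passing further, $t_n \to t_\infty \in [\tau/4,\infty]$ and $T_n - t_n \to r_\infty \in [0,\infty]$, and $\|\wt{J}\| \leq 1$ on $[-t_\infty, r_\infty] \cap \RR$, with $\wt{J}(-t_\infty) = 0$ when $t_\infty < \infty$ and $\wt{J}(r_\infty) = 0$ when $r_\infty < \infty$ (the latter from $\|\wt{J}_n(T_n-t_n)\| < 1/n$). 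If $t_\infty = r_\infty = \infty$, then $\wt{J}$ is bounded on $\RR$, so Lemma~\ref{lem:kerV} gives $\wt{J}'(0) = U^s(w)\wt{J}(0) = U^u(w)\wt{J}(0)$, and the key fact forces $\wt{J}(0) = 0$, contradicting $\|\wt{J}(0)\| = 1$. If both endpoints are finite, $\wt{J}$ has two distinct zeros in $[-t_\infty,r_\infty]$, violating the absence of conjugate points. In the mixed case $t_\infty = \infty$, $r_\infty < \infty$, the shifted field $L(s) = \wt{J}(s+r_\infty)$ along $\gamma_{g^{r_\infty}w}$ is bounded on $(-\infty,0]$ with $L(0)=0$, so Lemma~\ref{lem:kerV}(2) yields $L'(0) = U^u(g^{r_\infty}w)L(0) = 0$, and Jacobi-equation uniqueness forces $L \equiv 0$, a contradiction; the symmetric mixed case is handled identically.

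The ``in particular'' statement follows from the first part applied to $\hat{J}_T(s) = J_T(T-s)$, a perpendicular Jacobi field along $\gamma_{-g^Tv}$ with $\hat{J}_T(0) = 0$, where $J_T$ is the Jacobi field with $J_T(0) = J(0)$ and $J_T(T) = 0$; note $-g^Tv \in \C_\epsilon$ since $V(-v) = V(v)$ and $\C_\epsilon$ is $g^t$-invariant. This gives $\|J_T(s)\| \leq L\|J(0)\|$ on $[0,T]$, and letting $T \to \infty$ yields the bound on the stable Jacobi field (using $J_T \to J^s$ from Proposition~\ref{prop:riccati_basic}). The main obstacle is organizing the four-regime limit analysis cleanly: the subtlety lies in the mixed cases, which are dispatched by combining Lemma~\ref{lem:kerV} with Jacobi-equation uniqueness, while the both-infinite regime crucially exploits the quantitative transversality $\det V(w) \geq \epsilon$ guaranteed by $w \in \C_\epsilon$.
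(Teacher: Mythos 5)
Your proof is correct and follows essentially the same strategy as the paper: argue by contradiction, rescale at a point where $\norm{J_n}$ is maximal, translate along the geodesic flow, and use compactness of $\C_\epsilon$ to extract a limiting bounded perpendicular Jacobi field, which is impossible since vectors in $\C_\epsilon$ admit no such fields; the ``in particular'' part is handled by the same approximation $J_T \rightarrow J^s$ as in the paper. The only difference is cosmetic: where the paper invokes Lemma~\ref{lem:bound} to assert that $s_n$ and $T_n-s_n$ tend to infinity, you establish a uniform lower bound on $t_n$ and then dispose of the finite-limit regimes explicitly via absence of conjugate points and Lemma~\ref{lem:kerV}, which is a slightly more careful treatment of the same argument.
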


\begin{proof}
Suppose the first assertion is false, then there exists $v_n \in \C_\epsilon$, $T_n >0$, and Jacobi fields $J_n$ along $\gamma_{v_n}$ with $J_n(0)=0$, $g(v_n,J_n^\prime(0))=0$, and $\norm{J_n(T_n)}=1$ such that
\begin{align*}
\norm{J(s_n)} \geq n
\end{align*}
for some $s_n \in [0,T_n]$. We may assume that $s_n \in [0,T_n]$ is picked such that $\norm{J(s_n)}$ is maximal. Now by Lemma~\ref{lem:bound}, $s_n$ and $T_n-s_n$ must go to infinity as $n$ goes to infinity. Then 
\begin{align*}
Z_n(t)=\frac{1}{\norm{J_n(s_n)}}J_n(t+s_n)
\end{align*}
is a Jacobi field bounded above by one on the interval $[-s_n,T_n-s_n]$. By passing to a subsequence we may suppose that $g^{s_n}v_n \rightarrow v \in \C_\epsilon$, $Z_n(0) \rightarrow X \in v^{\bot}$, and $Z_n^\prime(0)\rightarrow Y \in v^{\bot}$.  Then if $J$ is the Jacobi field along $\gamma_v$ with $J(0)=X$ and $J^\prime(0)=Y$, the sequence $Z_n$ converges pointwise to $J$ and so $\norm{J(t)}\leq 1$ on $\RR$. However this is a contradiction because $g(X,v)=0$ and $v \in \C_\epsilon$.

To see the second assertion, observe that $J$ is the limit of Jacobi fields $J_T$ with $J_T(0)=J(0)$ and $J_T(T)=0$.
\end{proof}  

\begin{proof}[Proof of Proposition~\ref{prop:hyperbolic_sets}] It is enough to demonstrate the existence of $C, \lambda >0$ such that for all $v \in \C_\epsilon$ and $V \in E^s(v)$ we have
\begin{align*}
\norm{D(g^t)_v V} \leq Ce^{-\lambda t} \norm{V}
\end{align*}
for $t \geq 0$. Let 
\begin{align*}
f(t) = \sup\{\norm{ D(g^t)_vV} : v \in \C_\epsilon, V \in E^s(v), \norm{V}=1\}.
\end{align*}

We first claim that $\lim_{t \rightarrow  \infty} f(t) =0$. If $J$ is the Jacobi field long $\gamma_v$ with $(J(0),J^\prime(0))=V$ then
\begin{align*}
\norm{D(g^t)_v V}^2 = \norm{J(t)}^2 + \norm{J^\prime(t)}^2 \leq (1+\kappa^2)\norm{J(t)}^2,
\end{align*} 
by Lemma~\ref{lem:bound}, so it is enough to show that $\norm{J(t)}$ converges uniformly to $0$ for all Jacobi fields along geodesics $\gamma_v$ with $v \in \C_\epsilon$, $(J(0),J^\prime(0)) \in E^s(v)$, and $\norm{J(0)}^2+\norm{J^\prime(0)}^2=1$. 

Suppose not then there exists $\epsilon \in (0,1)$ and sequences $v_n \in \C_\epsilon$, $t_n \geq 0$, and $(X_n,Y_n) \in E^s(v_n)$ such that $t_n \rightarrow \infty$, $\norm{(X_n,Y_n)}=1$, and the Jacobi field $J_n$ with $J_n(0)=X_n$ and $J_n^\prime(0)=Y_n$ has $\norm{J_n(t_n)} \geq \epsilon$.   

Then the Jacobi fields 
\begin{align*}
Z_n(t) = \frac{1}{\norm{J_n(t_n)}}J_n(t+t_n)
\end{align*}
are bounded by $\epsilon^{-1}L$ on the intervals $[-t_n,\infty)$. By passing to a subsequence we may suppose that $g^{t_n}v_n \rightarrow v \in \C_\epsilon$, $Z_n(0) \rightarrow X \in v^{\bot}$, and $Z_n^\prime(0)\rightarrow Y \in v^{\bot}$.  Then if $J$ is the Jacobi field along $\gamma_v$ with $J(0)=X$ and $J^\prime(0)=Y$, the sequence $Z_n$ converges pointwise to $J$ and so $\norm{J(t)} \leq \epsilon^{-1}L$ on $\RR$. However this is a contradiction because $g(X,v)=0$ and $v \in \C_\epsilon$.

Now there exists $T_0$ such that $f(T_0) < 1$. The chain rule implies that $f(t+s) \leq f(t)f(s)$ and by Lemma~\ref{lem:bound} $f(t) \leq \sqrt{1+\kappa^2}e^{\kappa t}$. These three facts imply the existence of $C>0, \lambda >0$ such that $f(t) \leq Ce^{-\lambda t}$. 
\end{proof}

\subsection{No focal points}\label{subsec:no_focal}

In this subsection we prove the following proposition.

\begin{proposition}
\label{prop:NPC}
Let $M$ be a compact asymptotically harmonic manifold. If $M$ has no focal points, then $M$ is either flat or a rank one symmetric space of noncompact type.
\end{proposition}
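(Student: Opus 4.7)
The plan is to reduce to Proposition~\ref{prop:rank_one} via a rank rigidity result of Watkins~\cite{watkins11} for manifolds without focal points. If there exists $v_0 \in SM$ with $\det V(v_0) \neq 0$, where $V(v) = B(-v)+B(v)$, then Proposition~\ref{prop:rank_one} immediately implies that $M$ is a rank one locally symmetric space of noncompact type. So the work consists in handling the case where $\det V(v) = 0$ for every $v \in SM$ and showing that this forces $M$ to be flat.

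First I would establish the pointwise identity $V(v) = U^u(v) - U^s(v)$ on all of $SM$. Proposition~\ref{prop:asym_constant} gives $B(-v) = U^u(v)$, and hence $B(v) = -U^s(v)$ after reflecting $v \mapsto -v$, for Liouville-almost every $v$. Combined with the pointwise inequality $U^u(v) - U^s(v) \geq V(v) \geq 0$ from Lemma~\ref{lem:ineq1} and Lemma~\ref{lem:Buse_ineq}, the continuity of $B$ (Theorem~\ref{thm:cont}) and of the Riccati solutions in the no focal points setting~\cite{eschen77} promote the identity to every $v$. Lemma~\ref{lem:kerV} then identifies $\ker V(v)$ with the initial data of non-trivial perpendicular parallel Jacobi fields along $\gamma_v$, so our hypothesis that $\det V(v) = 0$ everywhere means that every geodesic in $M$ admits such a parallel Jacobi field, i.e.\ $M$ has higher rank in the sense of Ballmann.

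At this stage I invoke Watkins' extension~\cite{watkins11} of the Rank Rigidity Theorem to compact manifolds without focal points: the universal cover $X$ is either a non-trivial Riemannian product or an irreducible symmetric space of noncompact type of rank at least two. The latter is incompatible with asymptotic harmonicity, since in an irreducible higher-rank symmetric space $\Delta b_v$ depends on the Weyl-chamber class of $v$ (regular directions give strictly larger values than singular ones).

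Thus $X = X_1 \times X_2$ non-trivially. For a unit vector $v = (a v_1, b v_2)$ with $a^2+b^2 = 1$, the Busemann function decomposes as $b_v(x_1,x_2) = a\, b_{v_1}(x_1) + b\, b_{v_2}(x_2)$. Taking pure directions ($a=1$ or $b=1$) forces each factor to be asymptotically harmonic with some constant $\alpha_i$, and evaluating on mixed directions yields $\Delta b_v = a\alpha_1 + b\alpha_2$; requiring this to be independent of $(a,b)$ forces $\alpha_1 = \alpha_2 = \alpha = 0$. Proposition~\ref{prop:asym_constant} then gives $h_{vol} = 0$ and $\tr U^u(v) = 0$ for a.e.\ $v$, while the no focal points hypothesis, together with the monotonicity of the approximating Jacobi tensors, yields $U^u(v) \geq 0$ pointwise. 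Continuity then forces $U^u \equiv 0$, and the Riccati equation $(U^u)' + (U^u)^2 + R = 0$ gives $R \equiv 0$, so $X$, and hence $M$, is flat. The main obstacle I anticipate is the pointwise upgrade $V(v) = U^u(v)-U^s(v)$ on all of $SM$: Proposition~\ref{prop:asym_constant} is only almost-everywhere, and extending it relies on continuity properties of the Riccati solutions in the no focal points setting which are not set up explicitly earlier in the paper.
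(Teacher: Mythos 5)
Your argument has the same skeleton as the paper's: the dichotomy via Proposition~\ref{prop:rank_one}, Watkins' rank rigidity for no focal points, exclusion of irreducible higher-rank symmetric spaces via the root formula for $\tr U^s$, the scaling argument $\alpha=\sqrt{2}\,\alpha$ forcing $\alpha=0$ in the product case, and flatness from the sign and trace of a Riccati solution via $(U)'+U^2+R=0$. The one step you handle differently is exactly the one you flag: you upgrade the almost-everywhere identity $B(-v)=U^u(v)$ of Proposition~\ref{prop:asym_constant} to all of $SM$ using continuity of the Green/Riccati bundles. The paper sidesteps this entirely: in the no focal points setting Eschenburg's results \cite[Theorem 1, Proposition 5]{eschen77} (the same reference you invoke) give the pointwise identity $U^u(v)=B(-v)$, hence $U^s(v)=-B(v)$ and $V(v)=U^u(v)-U^s(v)$ at every $v$, so no measure-theoretic upgrade and in fact no use of Proposition~\ref{prop:asym_constant} is needed here; it also makes the final step pointwise from the start, in the form $\tr U^s\equiv 0$ together with $U^s\le 0$ (the mirror image of your $U^u\ge 0$ statement). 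Your continuity claim is indeed available in the no focal points literature, so there is no genuine gap, only a more roundabout justification of the key identity; similarly, your splitting of Busemann functions $b_{(av_1,bv_2)}=a\,b_{v_1}+b\,b_{v_2}$ in the product case is a correct substitute for the paper's splitting of the Jacobi tensors in Lemma~\ref{lem:prod1}, and both yield the same conclusion.
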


If $M$ is a Riemannian manifold and $v \in SM$ then the \emph{rank} of $v$ is the dimension of the vector space of bounded Jacobi fields along the geodesic $\gamma_v(t)$. Then define the \emph{rank} of $M$ to be minimum rank over all $v \in SM$. As the next theorem shows within the category of compact manifolds without focal points, those with rank one are ``generic.'' 

\begin{theorem}\cite{watkins11}
Suppose $M$ is a compact Riemannian manifold without focal points. Let $X$ be the universal Riemannian cover of $M$. If $M$ has rank greater or equal to 2, then $X$ is either a Riemannian product or a symmetric space.
\end{theorem}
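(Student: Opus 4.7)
The plan is to split by the rank of $M$, reducing the rank one case to Proposition~\ref{prop:rank_one} and treating the rank $\geq 2$ case via Watkins' theorem, which presents the universal cover as either a Riemannian product or a symmetric space.

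For the rank one case, there exists $v_0 \in SM$ whose only bounded Jacobi fields along $\gamma_{v_0}$ are multiples of $\gamma_{v_0}'$; by Lemma~\ref{lem:kerV}, in the no focal points setting this is equivalent to $\det(U^u(v_0)-U^s(v_0)) \neq 0$. Since $v \mapsto U^u(v)-U^s(v)$ is continuous on $SM$ in manifolds without focal points, the set $\{v : \det(U^u(v)-U^s(v))\neq 0\}$ is open and hence has positive Liouville measure. By Proposition~\ref{prop:asym_constant}, almost every $v$ satisfies $B(-v)=U^u(v)$, and applying the same identity at $-v$ gives $B(v) = -U^s(v)$, so $B(-v)+B(v) = U^u(v)-U^s(v)$ almost everywhere. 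Intersecting this full-measure set with the open set above produces a $v$ with $\det(B(-v)+B(v))\neq 0$, and Proposition~\ref{prop:rank_one} finishes the case.

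In the rank $\geq 2$ case, Watkins' theorem provides two subcases. If $X$ is a symmetric space, I would invoke the fact that on a symmetric space of noncompact type of rank $\geq 2$ the mean curvature of a horosphere depends on its base point at infinity, so such spaces are not asymptotically harmonic; this leaves only flat spaces and rank one symmetric spaces of noncompact type.

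The main technical work is the product case $X = X_1 \times X_2$. A direct computation using the product metric shows that for unit $v_i \in SX_i$ and $a^2+b^2=1$ with $a,b>0$, the Busemann function of $v = (av_1,bv_2)$ is $b_v = a b^{(1)}_{v_1} + b b^{(2)}_{v_2}$, and hence $\Delta b_v = a \Delta_1 b^{(1)}_{v_1} + b \Delta_2 b^{(2)}_{v_2}$. Sending $b \to 0$ forces each factor to be asymptotically harmonic with the same constant $\alpha$, and then the identity $(a+b)\alpha = \alpha$ at $a = b = 1/\sqrt{2}$ forces $\alpha = 0$. Combined with Lemma~\ref{lem:Buse_ineq}, the endomorphism $B(-v)+B(v)$ is positive semidefinite with trace $2\alpha = 0$, so it vanishes identically. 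Then Proposition~\ref{prop:asym_constant} together with continuity gives $U^u(v) = U^s(v)$ for every $v \in SM$. Every Jacobi field $J$ along $\gamma_v$ with $J'(0) = U^s(v)J(0)$ is therefore bounded on all of $\RR$, so Lemma~\ref{lem:kerV} makes $\norm{J}$ constant; differentiating $\norm{J}^2$ at $t = 0$ gives $\langle J(0), U^s(v)J(0) \rangle = 0$ for every $J(0) \in v^{\bot}$, and symmetry of $U^s(v)$ then forces $U^s \equiv 0$. The Riccati equation $(U^s)' + (U^s)^2 + R = 0$ then yields $R \equiv 0$, so $X$ is flat. The principal obstacle is this product case: one must both extract $\alpha = 0$ from the splitting of Busemann functions and then leverage the equality $U^s = U^u$ carefully enough to conclude flatness via the Riccati equation.
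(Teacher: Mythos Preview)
The statement you were asked to prove is Watkins' Rank Rigidity Theorem itself, which the paper merely \emph{cites} as an external result (there is no proof of it in the paper). Your proposal does not prove this theorem at all: in the rank $\geq 2$ case you explicitly \emph{invoke} Watkins' theorem as an input, which is circular. What you have actually written is a proof of Proposition~\ref{prop:NPC} (the no-focal-points case of Theorem~\ref{thm:main1}), which \emph{uses} Watkins' theorem as one of its ingredients.

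Reading your argument as a proof of Proposition~\ref{prop:NPC}, it is close to the paper's, with two differences worth noting. First, in the rank-one case the paper uses the identity $U^u(v)=B(-v)$, valid for \emph{all} $v$ in manifolds without focal points (citing Eschenburg), so one immediately has $\det(B(-v)+B(v))=\det(U^u(v)-U^s(v))\neq 0$ at the rank-one vector; your detour through the almost-everywhere statement of Proposition~\ref{prop:asym_constant} and a continuity/open-set argument is unnecessary here. Second, in the product case the paper works directly with the stable Riccati solutions (Lemma~\ref{lem:prod1}) rather than Busemann functions, obtaining $\tr U^s\equiv 0$ and then concluding flatness via the negative-semidefiniteness of $U^s$ in the no-focal-points setting (Lemma~\ref{lem:prod2}); your route via $B(-v)+B(v)=0$ and then Proposition~\ref{prop:asym_constant} plus continuity to get $U^s=U^u$ everywhere reaches the same endpoint but is more roundabout, and the continuity step for $U^s,U^u$ should be justified (it holds in no focal points, but you should say so).
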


The above theorem is true if compactness is replaced by a weaker condition, see the exposition in~\cite{watkins11} for details. The above theorem is a generalization of the well known Rank Rigidity Theorem of Ballmann~\cite{ballmann85} and Burns and Spatzier~\cite{BS87} for manifolds with nonpositive curvature. 

For manifolds without focal points $U^u(v)=-U^s(-v)=B(-v)$~\cite[Theorem 1, Proposition 5]{eschen77} and by Lemma~\ref{lem:kerV} a vector $v \in SM$ has rank one if and only if
\begin{align*}
0 \neq \det(U^u(v)+U^s(v) ) = \det (B(-v)+B(v) ).
\end{align*}
So by Proposition~\ref{prop:rank_one}, it is enough to show that any compact asymptotically harmonic manifold without focal points and higher rank is flat. Notice that a manifold is asympototically harmonic if and only if its universal cover is asymptotically harmonic. By the Rank Rigidity theorem we know that any such manifold must be a product or an irreducible higher rank symmetric space. 

Ledger~\cite{ledger57} showed that irreducible symmetric harmonic manifolds must have rank one (see also Eschenburg~\cite{eschenburg80}). The same is true for asymptotically harmonic manifolds, for instance if $X$ is a non-compact symmetric space the discussion in~\cite{eschenburg80} implies that
\begin{align*}
\tr U^s(v) = -\sum_{\alpha \in Rt} k_{\alpha} \abs{\alpha(v)}
\end{align*}
where $Rt$ is the set of roots of $X$ and $k_{\alpha}$ are nonnegative integers. If rank is greater or equal to two and $X$ is irreducible, the right hand side will not be constant.

\begin{lemma} 
\label{lem:no_higher_rank}
Suppose $X$ is an irreducible symmetric space of noncompact type. Then $X$ is asymptotically harmonic if and only if $X$ has rank one.
\end{lemma}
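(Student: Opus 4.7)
The plan is to use the formula quoted above from Eschenburg's work,
\[
\tr U^s(v) = -\sum_{\alpha \in Rt} k_\alpha \abs{\alpha(v)},
\]
valid for $v$ a unit vector in a Cartan subspace $\mathfrak{a} \subset \mathfrak{p}$, together with the identity $\Delta b_v = -\tr U^s(v)$. The latter holds because $X$ has nonpositive curvature and hence no focal points, which (as recalled at the start of Section~\ref{subsec:no_focal}) gives $U^s(w) = -B(w)$ for every $w \in SX$, so $\Delta b_v = \tr B(v) = -\tr U^s(v)$.

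For the easy direction, if $X$ has rank one then $X$ is two-point homogeneous, so $\mathrm{Isom}(X)$ acts transitively on $SX$. Since $\tr U^s$ is isometry-invariant, it is a universal constant on $SX$, and therefore so is $\Delta b_v$; thus $X$ is asymptotically harmonic.

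For the converse, the plan is to assume $X$ is irreducible of rank $r \geq 2$ and that $\tr U^s(v)$ is constant on $SX$, and then to derive a contradiction. Because the isotropy group acts by isometries and transitively on Cartan subspaces of $\mathfrak{p}$, it suffices to show that $\tr U^s$ is not constant on the unit sphere $S \subset \mathfrak{a}$. On the interior of a positive Weyl chamber $\mathfrak{a}^+$, every positive root takes positive values, so the displayed formula reduces to
\[
\tr U^s(v) = -2\rho(v), \qquad \rho := \sum_{\alpha > 0} k_\alpha\, \alpha,
\]
where $\rho$ is a nonzero linear functional on $\mathfrak{a}$ (it lies in the interior of the dual positive Weyl chamber, and the restricted root system is nonempty since $X$ is of noncompact type). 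Since $\mathfrak{a}^+ \cap S$ is a nonempty open subset of the $(r-1)$-dimensional sphere $S$ and $r \geq 2$, the restriction of the nonzero linear functional $\rho$ to $\mathfrak{a}^+ \cap S$ cannot be constant, a contradiction.

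The only real obstacle is bookkeeping: matching conventions for the multiplicities $k_\alpha$ between the Eschenburg formula and the definition of $B(v)$, and confirming the sign $\Delta b_v = -\tr U^s(v)$. Once these are pinned down, the geometric content reduces to the elementary fact that a nonzero linear functional on $\mathfrak{a}$ cannot be constant on an open subset of the unit sphere in dimension at least two.
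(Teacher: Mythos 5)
Your proposal is correct and follows essentially the same route as the paper: the paper also derives non-constancy of $\Delta b_v = -\tr U^s(v)$ from the Eschenburg formula $\tr U^s(v) = -\sum_{\alpha} k_\alpha \abs{\alpha(v)}$ together with $B(v)=-U^s(v)$ in the no-focal-points setting, asserting that the right-hand side cannot be constant when the rank is at least two. You merely spell out the details the paper leaves implicit (restriction to a Weyl chamber where the sum becomes a nonzero linear functional, which cannot be constant on an open subset of the unit sphere in $\mathfrak{a}$, plus the two-point homogeneity argument for the rank one direction), so no substantive difference or gap.
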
 

It is a result of Lichnerowicz~\cite{lichnerowicz44} that a harmonic manifold which is a Riemannian product must be flat. The following two lemmas show that the same is true for asymptotically harmonic manifolds without focal points.

\begin{lemma}
\label{lem:prod1}
Supppose $X=X_1 \times X_2$ is a Riemannian product and $X$, $X_1$, $X_2$ have no conjugate points. Let $U^s, U_1^s, U_2^s$ be the stable Riccati solutions for $X,X_1,X_2$ respectively. If $X$ is asymptotically harmonic then $X_1$ and $X_2$ are asymptotically harmonic and $\tr U^s = \tr U_1^s = \tr U_2^s =0$.
\end{lemma}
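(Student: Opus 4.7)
My plan is to exploit the product structure of geodesics, Jacobi fields, and Busemann functions to reduce everything to each factor, and then extract the vanishing of the universal constant from the asymptotic harmonic assumption together with Laplacian additivity.

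First, I would derive the explicit decomposition of the Busemann function on $X = X_1 \times X_2$. Write $v = (v_1, v_2) \in S_p X$ with $p = (p_1, p_2)$, set $a_i = \norm{v_i}$ (so $a_1^2 + a_2^2 = 1$), and let $\hat v_i = v_i/a_i$ when $a_i > 0$. Using the Pythagorean form of the product metric and Taylor-expanding $d_X(x, \gamma_v(t))$ as $t \to \infty$, one obtains
\begin{equation*}
b_v(x_1, x_2) = a_1 b^{X_1}_{\hat v_1}(x_1) + a_2 b^{X_2}_{\hat v_2}(x_2),
\end{equation*}
where any term with $a_i = 0$ is dropped. Since the Laplacian on a Riemannian product splits additively on functions of this form, the condition $\Delta b_v \equiv \alpha$ becomes $a_1 \Delta_1 b^{X_1}_{\hat v_1} + a_2 \Delta_2 b^{X_2}_{\hat v_2} \equiv \alpha$ for all allowed parameters. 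Taking $(a_1,a_2) = (1,0)$ shows $X_1$ is asymptotically harmonic with the same constant $\alpha$, and symmetrically for $X_2$; then taking $(a_1,a_2) = (1/\sqrt 2, 1/\sqrt 2)$ forces $\alpha = \sqrt{2}\,\alpha$, so $\alpha=0$. This yields the asymptotic harmonic claims and that all three universal constants vanish.

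Next, for the stable Riccati trace identities, I would decompose the normal bundle $v^\perp = v_1^\perp \oplus v_2^\perp \oplus W$, where $W$ is the one-dimensional complement lying in the plane spanned by $v_1, v_2$ and orthogonal to $v$ (e.g.~spanned by $(a_2 \hat v_1, -a_1 \hat v_2)$). A normal Jacobi field along $\gamma_v = (\gamma_{v_1}, \gamma_{v_2})$ splits coordinatewise and is bounded for $t \geq 0$ iff each component is. The $W$-direction supports a bounded parallel Jacobi field (namely a suitable linear combination of $\gamma'_{v_1}(t)$ and $\gamma'_{v_2}(t)$ orthogonal to $v$), so Lemma~\ref{lem:kerV} forces $U^s(v)|_W = 0$. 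On each $v_i^\perp$, the stable Riccati tensor on $X$ agrees with the stable Riccati tensor of $X_i$ along the speed-$a_i$ geodesic $\gamma_{v_i}$, which by the standard speed-reparameterization equals $a_i U^s_i(\hat v_i)$. Summing the three block contributions gives
\begin{equation*}
\tr U^s(v) = a_1 \tr U^s_1(\hat v_1) + a_2 \tr U^s_2(\hat v_2).
\end{equation*}

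Combining with Lemma~\ref{lem:ineq1} and $\alpha = 0$ gives the upper bounds $\tr U^s \leq 0$ on $X$ and $\tr U^s_i \leq 0$ on $X_i$. To reach equality I would use the general fact that in any asymptotically harmonic manifold one has $\tr U^s \equiv -\alpha$: from the identity $U^s_T(v) = -\nabla^2 b_v^T(p)|_{v^\perp}$ proved in Lemma~\ref{lem:ineq1} we have $\tr U^s_T(v) = -\Delta b_v^T(p)$, and the monotone convergence $U^s_T \to U^s$ combined with $\Delta b_v^T(p) \to \alpha$ (the mean curvatures of large spheres converging to the horospherical mean curvature) yields $\tr U^s(v) = -\alpha$. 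Applied to $X$, $X_1$, $X_2$ with vanishing constants, this gives $\tr U^s = \tr U^s_1 = \tr U^s_2 = 0$. The main obstacle is the convergence $\Delta b_v^T(p) \to \alpha$, which requires upgrading the monotone $C^0$ convergence $b_v^T \to b_v$ to control on the trace of the Hessian at $p$; for this I would lean on the $C^\infty$-continuity of $v \mapsto b_v$ from Theorem~\ref{thm:cont} together with uniform Riccati-type bounds on $\nabla^2 b_v^T$.
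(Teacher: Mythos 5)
Your Busemann-splitting route to showing the factors are asymptotically harmonic with universal constant $\alpha = 0$ is correct, and it is a genuine alternative to the paper's argument, which works directly with the product decomposition of Jacobi tensors and the trace of $U^s$ rather than with Busemann functions. Your derivation of $\tr U^s(v) = a_1\tr U^s_1(\hat v_1) + a_2\tr U^s_2(\hat v_2)$, including the treatment of the extra direction $W$ via the bounded parallel Jacobi field and Lemma~\ref{lem:kerV}, is also sound (and is in fact more careful about bookkeeping than the corresponding formula in the paper).

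The gap is in the final step, the claim that $\Delta b_v^T(p) \to \alpha = \Delta b_v(p)$, i.e.\ that the mean curvatures of large geodesic spheres converge to the horospherical mean curvature. From Proposition~\ref{prop:riccati_basic} you already know exactly what $\Delta b_v^T(p)$ converges to: since $U^s_T(v) = -\nabla^2 b_v^T(p)|_{v^\perp}$ converges monotonically upward to $U^s(v)$, one has $\Delta b_v^T(p) \downarrow -\tr U^s(v)$. Whether $-\tr U^s(v)$ equals $\alpha$ is precisely the question of equality in Lemma~\ref{lem:ineq1}, equivalently whether $U^s(v) = -B(v)$, and the paper explicitly notes that in the general no-conjugate-points setting this relationship is unclear. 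The $C^\infty$-continuity from Theorem~\ref{thm:cont} is continuity in $v$ of $b_v$; it says nothing about the $T \to \infty$ limit and does not close this gap. The paper sidesteps the issue entirely by taking $\tr U^s \equiv \alpha$ as the starting point of its proof (Heber's Riccati-based formulation of asymptotic harmonicity), which is legitimate in the one place the lemma is invoked, namely the no-focal-points case, where $U^s(v) = -B(v)$ holds outright by~\cite{eschen77}. To repair your argument you would similarly need to invoke a no-focal-points hypothesis, or otherwise establish that $\tr U^s$ is constant.
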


\begin{proof} 
Suppose $X$ is asymptotically harmonic with $\tr U^s \equiv \alpha$. Viewing $T_{(x_1,x_2)}X = T_{x_1} X_1 \times T_{x_2} X_2$ and picking $(v_1,v_2) \in SX$ we have
\begin{align}
\label{eq:prod_lemma}
\tr U^s(v_1,v_2) = \norm{v_1} \tr U_1^s(v_1/\norm{v_1}) + \norm{v_2} \tr U_2^s(v_2/\norm{v_2}).
\end{align}
To see this let $\nabla,\nabla^1,\nabla^2$ be the Levi-Civita connections on $X,X_1,X_2$ respectively. Then
\begin{align*}
\nabla_{(Y_1,Y_2)} (Z_1,Z_2) = (\nabla^1_{Y_1} Z_1, \nabla^2_{Y_2} Z_2)
\end{align*}
implying, using the notation in Section 2, that
\begin{align*}
\JJ_T(t) = \left(\JJ^1_{\norm{v_1}T}\left(\norm{v_1}t\right), 
\JJ^2_{\norm{v_2}T}\left(\norm{v_2}t\right)\right)
\end{align*}
where $\JJ_T,\JJ^1_T,\JJ^2_T$ are the Jacobi tensors along the geodesics $\gamma_{(v_1,v_2)}$ in $X$, $\gamma_{v_1/\norm{v_1}}$ in $X_1$, $\gamma_{v_2/\norm{v_2}}$ in $X_2$ respectively that are equal to the identity at $t=0$ and vanish at $t=T$. Then taking the limit as $T\rightarrow \infty$ of $\JJ^\prime_T(0)$ yields equation~\ref{eq:prod_lemma}.

So if $v_1 \in SX_1$ and $v_2 \in SX_2$, plugging in $(v_1,0)$ and $(0,v_2)$ into Equation~\ref{eq:prod_lemma} yields $\tr U_1^s(v_1) = \alpha$ and $\tr U_2^s(v_2) \equiv \alpha$. But plugging in $(v_1/\sqrt{2},v_2/\sqrt{2})$ in Equation~\ref{eq:prod_lemma} yields 
\begin{align*}
\alpha = \tr U^2(v_1/\sqrt{2},v_1/\sqrt{2}) = \sqrt{2} \alpha
\end{align*}
and so $\alpha =0$.
\end{proof}

\begin{lemma}
\label{lem:prod2}
Let $X$ be an asymptotically harmonic manifold without focal points and $\tr U^s \equiv 0$, then $X$ is flat.
\end{lemma}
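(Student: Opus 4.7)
The plan is to show that the stable and unstable Riccati solutions coincide identically, which in turn forces both to vanish, and the Riccati equation then collapses to $R\equiv 0$.

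First I would combine the no-focal-points identity $B(v)=-U^s(v)$ (cited just before Lemma~\ref{lem:no_higher_rank}) with the hypothesis $\tr U^s\equiv 0$ to observe that the universal asymptotic-harmonicity constant is $\alpha=\tr B(v)=-\tr U^s(v)=0$. By Lemma~\ref{lem:Buse_ineq} the symmetric operator $B(-v)+B(v)$ is positive semidefinite; rewriting through $B=-U^s$ this becomes $U^u(v)-U^s(v)\geq 0$. Its trace is $-\tr U^s(-v)-\tr U^s(v)=0$, so a positive semidefinite symmetric operator with zero trace must vanish, and therefore $U^u(v)=U^s(v)$ for every $v\in SX$.

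Next I would turn this algebraic equality into a geometric statement using Lemma~\ref{lem:kerV}: in the no-focal-points setting, a Jacobi field $J$ along $\gamma_v$ with $g(J(0),v)=0$ has constant norm if and only if $J'(0)=U^s(v)J(0)=U^u(v)J(0)$. Since $U^s=U^u$, every stable Jacobi field has constant norm, and differentiating $\ip{J(t),J(t)}$ at $t=0$ yields $\ip{U^s(v)X,X}=0$ for every $X\in v^\perp$. Since $U^s(v)$ is a symmetric endomorphism (a standard Wronskian computation applied to the symmetric Jacobi tensor $\JJ_{v,T}$), vanishing of this quadratic form forces $U^s(v)=0$ for all $v$. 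Plugging into the Riccati equation $(U^s)'+(U^s)^2+R=0$ along any geodesic then gives $R\equiv 0$, so every sectional curvature of $X$ vanishes and $X$ is flat.

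I do not anticipate a serious obstacle. The only point that is not immediate is the symmetry of $U^s(v)$, which is standard but must be invoked in order to pass from vanishing of a quadratic form to vanishing of the endomorphism. Everything else is a short chain of algebraic observations once the equality $U^u=U^s$ is in hand.
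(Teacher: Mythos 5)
Your argument is correct, but it takes a genuinely different route from the paper's. The paper's proof is a one-liner: it cites Eberlein (Corollary 3.3 in~\cite{eberlein73I}) for the fact that in a manifold without focal points the stable Riccati solution $U^s$ is symmetric and negative semidefinite; combined with $\tr U^s \equiv 0$ this immediately forces $U^s\equiv 0$, and the Riccati equation then gives $R\equiv 0$. You instead pass through the equality $U^u=U^s$: you first observe $\alpha=0$ via $B=-U^s$, then use Lemma~\ref{lem:Buse_ineq} to get $U^u-U^s\geq 0$, kill it with the zero trace, and then interpret $U^u=U^s$ geometrically via Lemma~\ref{lem:kerV} as saying every stable Jacobi field has constant norm, from which $\langle U^s(v)X,X\rangle=0$ for all $X\in v^\perp$ and hence $U^s\equiv 0$ by symmetry. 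Both approaches ultimately rest on Eberlein's no-focal-points machinery; yours has the modest virtue of relying only on Lemma~\ref{lem:kerV}, which the paper already states in full, and on Lemma~\ref{lem:Buse_ineq}, rather than on an additional external citation for the semidefiniteness of $U^s$. The paper's is shorter; a further small economy available to you would have been to note that $U^u(v)-U^s(v)\geq 0$ is already recorded in Lemma~\ref{lem:rank_defn} with no need to route through $B$, and that once $U^u=U^s$ is in hand one can quote the negative semidefiniteness directly instead of detouring through constant-norm Jacobi fields. But none of this affects correctness.
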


\begin{proof}
As $X$ has no focal points $U^s$ is negative semidefinite (see for instance see the proof of Corollary 3.3 in Eberlein~\cite{eberlein73I}), further $\tr U^s  \equiv 0$ thus as $U^s$ is symmetric, $U^s \equiv 0$ and then the Riccati equation 
\begin{align*}
(U^s)^\prime + (U^s)^2+R=0
\end{align*}
implies that $R \equiv 0$.
\end{proof}

\subsection{Purely Exponential Volume Growth}\label{subsec:purelyExp}

In this subsection we establish the following.

\begin{proposition}
\label{prop:purelyExp}
Let $M$ be a compact asymptotically harmonic manifold with universal Riemannian cover $X$. If $X$ has purely exponential volume growth, then $X$ is a rank one symmetric space of noncompact type.
\end{proposition}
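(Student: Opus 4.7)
The plan is to produce a $v \in SM$ with $\det(B(-v) + B(v)) \neq 0$, after which Proposition~\ref{prop:rank_one} completes the argument. Proposition~\ref{prop:asym_constant} gives $\alpha = h_{vol}$ and $B(-v) = U^u(v)$ for Liouville-a.e.\ $v$; applying this identity to $-v$ and using $U^u(-v) = -U^s(v)$ yields $B(v) = -U^s(v)$ a.e.\ as well, so
\begin{align*}
B(-v) + B(v) = U^u(v) - U^s(v)
\end{align*}
holds Liouville-a.e. It therefore suffices to establish that $\det(U^u(v) - U^s(v)) > 0$ on a set of positive Liouville measure.

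The input from purely exponential volume growth enters through the polar coordinate formula
\begin{align*}
\text{vol}_X(B_R(p)) = \int_{S_p X}\int_0^R \det A_v(r) \, dr \, dv,
\end{align*}
where $A_v$ is the Jacobi tensor along $\gamma_v$ with $A_v(0) = 0$ and $A_v'(0) = \mathrm{Id}$. Whenever $U^u(v) - U^s(v)$ is invertible, solving the initial value problem in the basis of stable and unstable Jacobi tensors yields
\begin{align*}
A_v(r) = \bigl(\JJ^u_v(r) - \JJ^s_v(r)\bigr) \bigl(U^u(v) - U^s(v)\bigr)^{-1}.
\end{align*}
Proposition~\ref{prop:asym_constant} also ensures $\tr U^u \equiv h_{vol}$ Liouville-a.e., so by Fubini $\det \JJ^u_v(r) = e^{h_{vol} r}$ along a.e.\ orbit, and combined with the boundedness of $\JJ^s_v(r)$ in $r$ this produces the asymptotic
\begin{align*}
e^{-h_{vol} r}\det A_v(r) \longrightarrow \frac{1}{\det(U^u(v) - U^s(v))}
\end{align*}
on the nonsingular locus. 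On the complementary singular locus, for each flat direction $X \in \ker(U^u(v) - U^s(v))$ the Jacobi field $r\mapsto A_v(r)X$ (which vanishes at $r=0$) admits only polynomial rather than exponential growth, forcing $e^{-h_{vol} r}\det A_v(r) \to 0$.

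If $\det(U^u(v) - U^s(v)) = 0$ on a set of full Liouville measure, substituting the singular-locus asymptotic into the polar formula would force $e^{-h_{vol} R}\text{vol}_X(B_R(p)) \to 0$, contradicting the purely exponential lower bound $\text{vol}_X(B_R(p)) \geq C^{-1} e^{h_{vol} R}$. Hence $\det(U^u(v) - U^s(v))$ is positive on a set of positive Liouville measure, yielding the required rank-one vector. The principal technical obstacle is justifying the singular-locus asymptotic uniformly enough to survive integration against the polar formula; this requires careful control of the polynomial corrections to $\det A_v(r)$ coming from the bounded Jacobi fields along flat directions, together with a dominated-convergence argument powered by the Riccati equation and the uniform boundedness of $\JJ^s_v(r)$ in $r$.
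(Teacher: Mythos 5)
Your overall strategy is right: reduce via Proposition~\ref{prop:rank_one} and Proposition~\ref{prop:asym_constant} to showing $\det(U^u(v)-U^s(v)) \neq 0$ on a set of positive measure, and then use the polar-coordinate volume formula. But the central asymptotic you rely on is wrong in sign, and the error reverses the whole direction of the argument.

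You claim that on the singular locus $e^{-h_{vol} r}\det A_v(r) \to 0$, arguing that the radial Jacobi field $A_v(r)X$ along a flat direction $X \in \ker(U^u(v)-U^s(v))$ grows only polynomially. This heuristic is misleading: the vector $X$ is the initial value of a \emph{bounded} Jacobi field (the one with $J(0)=X$, $J'(0)=U^s(v)X$), while $A_v(r)X$ has $A_v(0)X=0$, $A_v'(0)X=X$, and is essentially the Wronskian-conjugate of the bounded one -- it need not be bounded, and more importantly the determinant of $A_v$ is \emph{worse} behaved, not better, when $U^u-U^s$ degenerates. The clean way to see this is the identity the paper uses:
\begin{align*}
A_v(t) \;=\; \JJ^u_v(t)\,\bigl((\JJ^u_v)'(0)-(\JJ^s_{v,t})'(0)\bigr)^{-1}
\;=\; \JJ^u_v(t)\,\bigl(U^u(v)-U^s_t(v)\bigr)^{-1},
\end{align*}
which is \emph{exact} for all $t$ (it uses $\JJ^s_{v,t}(t)=0$, avoiding entirely your worry about ``polynomial corrections'' from $\JJ^s_v$). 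Taking determinants, $\det A_v(t) = \det\JJ^u_v(t)/g(t,v)$ where $g(t,v)=\det(U^u(v)-U^s_t(v))$. By Proposition~\ref{prop:riccati_basic}(2) and a Minkowski-inequality argument, $g(t,v)$ is \emph{monotone decreasing} to $\det(U^u(v)-U^s(v))$. Combined with $\det\JJ^u_v(t)\geq e^{h_{vol}t}$ (which the paper obtains for \emph{every} $v$ from the pointwise bound $U^u(v)\geq B(-v)$ of Lemma~\ref{lem:ineq1} plus $\tr B(-v)\equiv h_{vol}$, sidestepping your Fubini step that only gives an a.e.-Liouville statement and would have to be further disintegrated onto $S_pX$), one finds
\begin{align*}
\frac{\det A_v(t)}{e^{h_{vol}t}} \;\geq\; \frac{1}{g(t,v)} \;\longrightarrow\; \frac{1}{\det(U^u(v)-U^s(v))},
\end{align*}
which on the singular locus diverges to $+\infty$, not $0$. (Sanity check in $\RR^n$: $\det A_v(r)=r^{n-1}$, $h_{vol}=0$, and the ratio is $r^{n-1}\to\infty$.)

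Consequently, your invocation of dominated convergence and of the purely exponential \emph{lower} bound is not the right mechanism. If $\det(U^u-U^s)=0$ on a set of positive measure in $S_pX$, the above gives $\liminf_{r\to\infty} e^{-h_{vol}r}\mathrm{Vol}_X(B_r(p)) \geq \frac{1}{h_{vol}}\int_{S_pX}\det(U^u(v)-U^s(v))^{-1}\,dv = +\infty$ by monotone convergence, which contradicts the purely exponential \emph{upper} bound $\mathrm{Vol}_X(B_R(p))\leq Ce^{h_{vol}R}$. You need to flip both the asymptotic and which half of the purely exponential hypothesis you contradict.
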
 

Using a result of Coornaert~\cite[Theorem 7.2]{coor93} any Gromov hyperbolic simply-connected manifold $X$ with a compact Riemannian quotient has purely exponetial volume growth, so we obtain the following.

\begin{corollary}
Let $M$ be a compact asymptotically harmonic manifold with universal Riemannian cover $X$. If $X$ is Gromov hyperbolic, then $X$ is a rank one symmetric space of noncompact type.
\end{corollary}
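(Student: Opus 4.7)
\emph{Plan.} The plan is to produce a single $v_0 \in SM$ with $\det(B(-v_0)+B(v_0)) \neq 0$ and then appeal to Proposition~\ref{prop:rank_one}. First I would invoke Proposition~\ref{prop:asym_constant}: $\alpha = h_{vol}$ and $B(-v) = U^u(v)$ for Liouville-almost every $v \in SM$. Applying this to $-v$ and using $U^u(-v) = -U^s(v)$ also gives $B(v) = -U^s(v)$ a.e., so on a full Liouville measure set $E \subseteq SM$,
\begin{align*}
B(-v) + B(v) = U^u(v) - U^s(v) \geq 0.
\end{align*}
Thus it is enough to find $v_0 \in E$ with $\det(U^u(v_0) - U^s(v_0)) > 0$.

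\emph{Main argument.} I would argue by contradiction, assuming $\det(U^u(v)-U^s(v)) = 0$ for almost every $v \in E$. For each such $v$ the kernel $K(v) := \ker(U^u(v) - U^s(v))$ is non-trivial, and by Lemma~\ref{lem:kerV} every nonzero $X \in K(v)$ produces a Jacobi field along $\gamma_v$ that is bounded on all of $\RR$. The sphere Jacobi tensor $A_v$ (determined by $A_v(0) = 0$, $A_v'(0) = I$) satisfies the identity $A_v(t) = (\JJ^u_v(t) - \JJ^s_v(t))(U^u(v) - U^s(v))^{-1}$ where the inverse exists. In the degenerate case it should acquire an extra polynomial-in-$t$ factor from each direction in $K(v)$: a Wronskian/variation-of-parameters analysis along a direction $X \in K(v)$ should yield $\|A_v(t)X\| \gtrsim t$ for large $t$, while on directions transverse to $K(v)$ the tensor $A_v(t)$ grows exponentially, with
\begin{align*}
\frac{d}{dt} \log \det A_v(t) = \tr\bigl(A_v'(t) A_v(t)^{-1}\bigr) \longrightarrow \tr U^u(g^t v) = h_{vol}
\end{align*}
(the final equality holding for almost every $t$, since the full measure set $E$ is Liouville-invariant). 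Combining these contributions,
\begin{align*}
\det A_v(t) \gtrsim t \cdot e^{h_{vol} t}
\end{align*}
for almost every $v \in SM$ and sufficiently large $t$.

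Picking a lift $\wt p \in X$ of $p$ at which $E \cap S_{\wt p} X$ has full measure, and writing volumes in geodesic polar coordinates,
\begin{align*}
\text{vol}_X(B_R(\wt p)) = \int_{S_{\wt p} X}\int_0^R \det A_v(t)\, dt\, dv \gtrsim R \cdot e^{h_{vol} R},
\end{align*}
which contradicts the upper bound $\text{vol}_X(B_R(\wt p)) \leq C e^{h_{vol} R}$ from purely exponential volume growth.

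\emph{Main obstacle.} The technical heart of the argument is rigorously establishing the lower bound $\det A_v(t) \gtrsim t \cdot e^{h_{vol} t}$ in the degenerate case $\det(U^u(v) - U^s(v)) = 0$. Without a curvature sign hypothesis (and in particular without assuming no focal points), the bounded Jacobi field along $\gamma_v$ in the direction of $K(v)$ need not have constant norm and could approach zero, complicating the Wronskian/variation-of-parameters calculation used to deduce the linear-in-$t$ growth of $\|A_v(t) X\|$. Controlling the limit of the singular representation of $A_v$ uniformly over a positive-measure subset of $E$ is where the main work would have to be done.
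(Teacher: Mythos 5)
Your overall strategy is the right one (it is the paper's: reduce to finding one $v_0$ with $\det(B(-v_0)+B(v_0))\neq 0$ via Proposition~\ref{prop:asym_constant} and a volume-growth contradiction, then apply Proposition~\ref{prop:rank_one}), but there are two genuine gaps. First, the hypothesis of the corollary is Gromov hyperbolicity, not purely exponential volume growth; at the end you invoke the upper bound $\text{vol}_X(B_R(\wt p)) \leq C e^{h_{vol} R}$ ``from purely exponential volume growth'' without justification. Passing from ``$X$ Gromov hyperbolic with compact quotient'' to this upper bound is a nontrivial input: the paper obtains it from Coornaert's theorem on quasi-conformal densities for hyperbolic groups (\cite[Theorem 7.2]{coor93}), and then the corollary is literally ``Coornaert plus Proposition~\ref{prop:purelyExp}.'' Your proof as written silently replaces the stated hypothesis by a stronger one.

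Second, the quantitative heart of your contradiction, $\det A_v(t) \gtrsim t\, e^{h_{vol} t}$ on the degenerate set, is not established, and you yourself flag that the Wronskian/variation-of-parameters step may fail without a focal-point or curvature-sign assumption (the bounded Jacobi field in a kernel direction need not have norm bounded away from zero). This pointwise rate is also unnecessary. The paper's argument avoids it entirely: writing $A_v(t) = \JJ^u_v(t)\bigl((\JJ^u_v)'(0)-(\JJ^s_{v,t})'(0)\bigr)^{-1}$, one has $\det A_v(t) = \det \JJ^u_v(t)/g(t,v)$ with $\det\JJ^u_v(t) \geq e^{h_{vol}t}$ (from $\tr U^u \geq \tr B(-\cdot) = h_{vol}$, Lemma~\ref{lem:ineq1} and Proposition~\ref{prop:asym_constant}) and $g(t,v)$ decreasing monotonically to $\det(U^u(v)-U^s(v))$. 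Integrating in polar coordinates and applying the monotone convergence theorem then gives $\liminf_{r\to\infty} \text{Vol}_X(B_r(p))e^{-h_{vol}r} \geq \frac{1}{h_{vol}}\int_{S_pX} \det(U^u(v)-U^s(v))^{-1}\, dv = +\infty$ as soon as the degenerate set has positive measure in $S_pX$ — no factor of $t$, no Wronskian analysis, and no control of the bounded Jacobi field is needed. Replacing your pointwise estimate by this monotone-convergence argument, and adding the Coornaert step, closes the proof.
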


By Proposition~\ref{prop:rank_one}, it is enough to show that there exists $v \in SX$ with $\det(B(-v)+B(v)) \neq 0$ . By Proposition~\ref{prop:asym_constant} $B(-v) = U^u(v)$ and $-B(v)=U^s(v)$ on a set of full measure in $SM$, so it is actually enough to show that $\det(U^u(v)-U^s(v)) \neq 0$ on a set of positive measure. To prove this we will exploit a connection between rank and volume growth that was used by Knieper~\cite{knieper11} in the context of harmonic manifolds.

We begin by stating a lemma from linear algebra.

\begin{lemma}
\label{lem:det}
Suppose $A,B$ are positive definite matrices and $A \leq B$, then $\det A \leq \det B$.
\end{lemma}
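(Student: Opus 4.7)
The plan is to reduce to a statement about eigenvalues via a congruence transformation. Since $A$ is positive definite, it has a positive definite square root $A^{1/2}$, and $A^{-1/2}$ is well defined. Conjugating the inequality $A \leq B$ by $A^{-1/2}$ on both sides yields
\begin{equation*}
I = A^{-1/2} A A^{-1/2} \leq A^{-1/2} B A^{-1/2}.
\end{equation*}
Set $C = A^{-1/2} B A^{-1/2}$. Then $C$ is symmetric (it is a conjugate of $B$ by a symmetric matrix), and the inequality above says all eigenvalues of $C$ are at least $1$.

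Next I would conclude via multiplicativity of the determinant. Since $\det C$ equals the product of the eigenvalues of $C$, we obtain $\det C \geq 1$. On the other hand,
\begin{equation*}
\det C = \det(A^{-1/2}) \det(B) \det(A^{-1/2}) = \frac{\det B}{\det A}.
\end{equation*}
Since $\det A > 0$ (as $A$ is positive definite), the inequality $\det C \geq 1$ gives $\det B \geq \det A$, as desired.

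There is no real obstacle here; the only thing to be careful about is invoking the spectral theorem correctly to form $A^{1/2}$ and to diagonalize the symmetric matrix $C$. An alternative, essentially equivalent approach would be to parametrize $A_t = (1-t)A + tB$ for $t \in [0,1]$, observe that each $A_t$ is positive definite, and compute
\begin{equation*}
\frac{d}{dt} \log \det A_t = \tr\bigl(A_t^{-1}(B-A)\bigr) \geq 0,
\end{equation*}
since $A_t^{-1}$ is positive definite and $B - A$ is positive semidefinite, giving a trace of a product of a positive definite and a positive semidefinite symmetric matrix, which is nonnegative. Integrating from $0$ to $1$ then yields $\log \det B \geq \log \det A$. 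I would use the first approach since it is the more elementary of the two.
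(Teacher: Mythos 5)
Your proof is correct, but it takes a genuinely different route from the paper. The paper's argument is a one-line appeal to Minkowski's determinant inequality: writing $B = A + (B-A)$ with $B-A \geq 0$, it invokes $\det(A+(B-A))^{1/n} \geq \det(A)^{1/n} + \det(B-A)^{1/n} \geq \det(A)^{1/n}$. You instead use a congruence reduction: conjugate $A \leq B$ by $A^{-1/2}$ to get $I \leq A^{-1/2}BA^{-1/2}$, observe that the symmetric matrix $C = A^{-1/2}BA^{-1/2}$ therefore has all eigenvalues $\geq 1$, and conclude $\det C = \det B / \det A \geq 1$. Your approach is more self-contained — Minkowski's determinant inequality is itself a nontrivial fact (typically proved by precisely the kind of simultaneous diagonalization/congruence argument you give), whereas your version needs only the spectral theorem and multiplicativity of determinants. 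The paper's version is shorter if one is willing to cite Minkowski as a black box. Your alternative sketch via $\frac{d}{dt}\log\det A_t = \tr\bigl(A_t^{-1}(B-A)\bigr) \geq 0$ is also valid and is in fact closer in spirit to the calculation the author carries out elsewhere in the paper (the $\frac{d}{dt}\log\det V$ computation in Lemma~\ref{lem:inv}).
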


\begin{proof}
By Minkowski's inequality
\begin{equation*}
\det(B)^{1/n} =\det(A+(B-A))^{1/n} \geq \det(A)^{1/n} + \det(B-A)^{1/n} \geq \det(A)^{1/n}. \qedhere
\end{equation*}
\end{proof}

Next we recall some useful facts about volume growth in a Riemmannian manifold. Let $X$ be a simply connected Riemannian manifold and $p \in X$ then
\begin{align*}
\text{Vol}_X \ B_r(p) = \int_0^r \int_{S_pM} \det A_v(t) dv dt
\end{align*}
where $dt,dv$ are the standard Lebesque measures on $\RR$ and $S_pX$ and $A_v(t)$ is the Jacobi tensor along the geodesic $\gamma_v$ with initial conditions
\begin{align*}
A_v(0)=0 \text{ and } A_v^\prime(0)=Id.
\end{align*}
The tensor $A_v(t)$ can be written in terms of the stable and unstable Jacobi tensors as: 
\begin{align*}
A_v(t) = \Big(\JJ_v^u(t)-\JJ^s_{v,T}(t)\Big)\Big((\JJ_v^u)^\prime(0)-(\JJ^s_{v,T})^\prime(0)\Big)^{-1}
\end{align*}
for each $T >0$. To see this last assertion, notice that both sides of the above equation are Jacobi tensors and have the same initial conditions. In particular, 
\begin{align*}
A_v(t) & = \Big(\JJ_v^u(t)-\JJ^s_{v,t}(t)\Big)\Big((\JJ_v^u)^\prime(0)-(\JJ^s_{v,t})^\prime(0)\Big)^{-1} \\
 & = \JJ_v^u(t)\Big((\JJ_v^u)^\prime(0)-(\JJ^s_{v,t})^\prime(0)\Big)^{-1}.
\end{align*}

Now Proposition~\ref{prop:purelyExp} follows from the next Lemma.  

\begin{lemma}
If $X$ is a simply connected asymptotically harmonic manifold and $p \in X$ such that
\begin{align*}
\det(U^u(v)-U^s(v)) =0
\end{align*}
for a set of positive Lebesque measure in $S_pX$ then
\begin{align*}
\lim_{r \rightarrow \infty} \frac{ \text{Vol}_X \ B_r(p)}{e^{hr}} = +\infty.
\end{align*}
\end{lemma}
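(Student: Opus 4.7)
The strategy is to work directly with the explicit formula
\[
A_v(t) = \JJ_v^u(t)\bigl(U^u(v) - U^s_t(v)\bigr)^{-1}
\]
given in the setup, which yields $\det A_v(t) = \det \JJ_v^u(t)\big/\det(U^u(v) - U^s_t(v))$. The plan is to bound the numerator uniformly below by $e^{ht}$ and show that, because of the hypothesis on the bad set $E := \{v \in S_pX : \det(U^u(v)-U^s(v)) = 0\}$, the integral over $S_pX$ of the reciprocal of the denominator diverges as $t \to \infty$. The two bounds combine to force super-exponential growth of $\Vol_X B_r(p)$.

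For the numerator, $\JJ_v^u$ is a Jacobi tensor with $\JJ_v^u(0) = Id$ and $(\JJ_v^u)'(t)(\JJ_v^u(t))^{-1} = U^u(g^t v)$, so Jacobi's formula gives $\frac{d}{dt}\log\det \JJ_v^u(t) = \tr U^u(g^t v)$. Lemma~\ref{lem:ineq1} yields $U^u(w) \geq B(-w)$ for every $w \in SM$, and $\tr B(-w) = \Delta b_{-\wt{w}} = \alpha$ by asymptotic harmonicity. Since we are in the setting of Proposition~\ref{prop:purelyExp} (compact quotient), Proposition~\ref{prop:asym_constant} gives $\alpha = h_{vol} = h$. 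Hence $\tr U^u(g^t v) \geq h$ pointwise and $\det \JJ_v^u(t) \geq e^{ht}$ for every $v \in S_pX$ and $t \geq 0$.

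For the denominator, Proposition~\ref{prop:riccati_basic}(2) shows $U^s_t(v)$ is strictly increasing in $t$ in the positive-definite order, so $U^u(v) - U^s_t(v)$ is strictly positive definite and decreasing in $t$; Lemma~\ref{lem:det} then gives that $\det(U^u(v) - U^s_t(v))$ decreases monotonically to $\det(U^u(v) - U^s(v))$ as $t \to \infty$. Setting
\[
G(t) := \int_{S_pX}\frac{dv}{\det(U^u(v) - U^s_t(v))},
\]
the integrand increases monotonically to $+\infty$ on the positive-measure set $E$, so monotone convergence yields $G(t) \to +\infty$. Note each $G(t)$ is finite for $t>0$: otherwise $\Vol_X B_r(p)$ would already be infinite for $r > t$, contradicting local finiteness of Riemannian volume.

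Combining, $\Vol_X B_r(p) \geq \int_0^r e^{ht} G(t)\, dt$; substituting $u = r - t$ and then using monotonicity of $G$, for any fixed $R > 0$ and $r > R$,
\[
\frac{\Vol_X B_r(p)}{e^{hr}} \geq \int_0^r e^{-hu} G(r-u)\, du \geq G(r-R)\int_0^R e^{-hu}\, du.
\]
Since $G(r-R) \to +\infty$ as $r \to \infty$, the ratio $\Vol_X B_r(p)/e^{hr}$ tends to $+\infty$, proving the lemma. There is no essential obstacle: the central insight is that degeneracy of $U^u - U^s$ on a positive-measure set of directions produces enough "focusing" of the exponential map to overwhelm the exponential factor in the denominator, and the rest is bookkeeping with monotone convergence.
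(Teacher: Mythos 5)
Your proof is correct and follows essentially the same route as the paper: the bound $\det \JJ_v^u(t) \geq e^{h_{vol}t}$ via Lemma~\ref{lem:ineq1} and Proposition~\ref{prop:asym_constant}, the monotone decrease of $g(t,v)=\det\big(U^u(v)-U^s_t(v)\big)$ via Lemma~\ref{lem:det}, and a monotone-convergence blow-up of the angular integral over the degenerate set, with your substitution $u=r-t$ merely repackaging the paper's fixed-$R_0$ estimate. One small citation point: strict positive definiteness of $U^u(v)-U^s_t(v)$ does not follow from Proposition~\ref{prop:riccati_basic}(2) alone, since it also needs $U^u(v)-U^s(v)\geq 0$, which in this (asymptotically harmonic) setting is Lemma~\ref{lem:rank_defn}(1).
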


\begin{proof}
By Proposition~\ref{prop:asym_constant}, $h_{vol}=\Delta b_v(p) = \tr \nabla^2 b_v(p)$ for all $v  \in S_pX$. Further by Lemma~\ref{lem:ineq1}, $U^u(v) \geq B(-v) = \nabla^2 b_{-v}(p)|_{v^{\bot}}$ so
\begin{align*}
\frac{d}{dt} \log \det \JJ^u_v(t) = \tr \ (\JJ^u_v)^{-1}(t)(\JJ^u_v)^\prime(t)=\tr U^u(g^tv) \geq \tr \nabla^2 b_{-g^tv}(\gamma_v(t))=h_{vol}.
\end{align*}
Since $\JJ^u_v(0)=Id$, we obtain $e^{h_{vol} t} \leq \det \JJ^u_v(t)$. Now for each $v \in S_pX$, by Proposition~\ref{prop:riccati_basic},
\begin{align*}
\Big((\JJ_v^u)^\prime(0)-(\JJ^s_{v,t})^\prime(0)\Big) \rightarrow \Big(U^u(v)-U^s(v)\Big)
\end{align*}
and the convergence is monotone. In particular by Lemma~\ref{lem:det}  
\begin{align*}
g(t,v) = \det\Big((\JJ_v^u)^\prime(0)-(\JJ^s_{v,t})^\prime(0)\Big)
\end{align*}
converges monotonically to $\det (U^u(v)-U^s(v))$. Fix $R_0>0$ then for $r >R_0$ we have:
\begin{align*}
\text{Vol}_X \ B_r(p) & = \text{Vol}_X \ B_{R_0}(p)+ \int_{R_0}^r \int_{S_pX} \frac{\det \JJ_v^u(t)}{g(t,v)} dv dt \\
& \geq \text{Vol}_X \ B_{R_0}(p)+ \int_{R_0}^r \int_{S_pX} \frac{e^{h_{vol}t}}{g(R_0,v)} dv dt \\
& = \text{Vol}_X \ B_{R_0}(p) + \frac{e^{h_{vol} r} - e^{h_{vol} R_0}}{h_{vol}} \int_{S_pX} \frac{dv}{g(R_0,v)}
\end{align*}
Now dividing by $e^{h_{vol}r}$ and sending $r \rightarrow \infty$ yields
\begin{align*}
\liminf_{r \rightarrow \infty} \frac{ \text{Vol}_X \ B_r(p)}{e^{hr}} \geq \frac{1}{h_{vol}} \int_{S_pX} \frac{dv}{g(R_0,v)}.
\end{align*}
As $R_0>0$ was arbitrary, the Lebesque monotone convergence theorem implies that
\begin{align*}
(h_{vol})\liminf_{r \rightarrow \infty} & \frac{ \text{Vol}_X \ B_r(p)}{e^{hr}}  \geq \lim_{R_0\rightarrow \infty} \int_{S_pX} \frac{dv}{g(R_0,v)} = \int_{S_pX} \lim_{R_0\rightarrow \infty} \frac{1}{g(R_0,v)} dv.
\end{align*}
Finally as 
\begin{align*}
\lim_{R_0 \rightarrow \infty} g(R_0,v) = \det (U^u(v)-U^s(v))
\end{align*}
the lemma follows.
\end{proof}

\section{The Busemann compactification and Patterson-Sullivan measures}

We begin by recalling the Busemann compactification of a non-compact complete Riemannian manifold $X$. As in~\cite{Led10, LW10} we normalize our Busemann functions such that $\xi(o)=1$ for some point $o \in X$.

Fix a point $o \in X$ and for each $y \in X$ define the Busemann function based at $y$ to be the function
\begin{align*}
b_y(x) = d(x,y)-d(y,o).
\end{align*}
As each $b_y$ is 1-Lipschitz, the embedding $y \rightarrow b_y \in C(X)$ is relatively compact if $C(X)$ is equipped with the topology of uniform convergence on compact subsets. We then define the \emph{Busemann compactification} $\hat{X}$ of $X$ to be the compactification of $X$ in $C(X)$. The \emph{Busemann boundary} of $X$ is the set $\partial \hat{X} = \hat{X} \setminus X$. We begin by recalling some features of this compactification. If $(X,g)$ has no conjugate points for each $v \in S_oX$ there is a natural Busemann function
\begin{align*}
b_v(x) = \lim_{t \rightarrow \infty} d(x,\gamma_v(t))-t.
\end{align*}

\begin{theorem}
\label{thm:buse_bd_basic}
Let $X$ be a non-compact complete simply connected Riemannian manifold. Then
\begin{enumerate}
\item $X$ is open in $\hat{X}$, hence the Busemann boundary $\partial \hat{X}$ is compact.
\item The action of $\text{Isom}(X)$ on $X$ extends to an action on $\hat{X}$ by homeomorphisms and for $\gamma \in \text{Isom}(X)$ and $\xi \in \partial \hat{X}$ the action is given by 
\begin{align*}
(\gamma \cdot \xi)(x) = \xi(\gamma^{-1}x)-\xi(\gamma^{-1}o).
\end{align*}
\item If $X$ has no conjugate points, then for $v \in SX$ each $b_v$ is $C^1$
\item If $X$ has no conjugate points and sectional curvature bounded below, then each $\xi \in \partial \hat{X}$ is $C^1$ and $\norm{\nabla  \xi}=1$. In particular the integral curves of $\xi$ are geodesics.
\end{enumerate}
\end{theorem}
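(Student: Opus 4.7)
The four claims will be handled in sequence, exploiting the fact that $\iota\colon y\mapsto b_y$ is a natural map from $X$ into $C(X)$.

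For (1), I first verify that $\iota$ is a topological embedding. Continuity comes from the two-sided triangle inequality $\abs{b_y(x)-b_{y'}(x)}\leq 2d(y,y')$. Injectivity follows by evaluating $b_y=b_{y'}$ at both $y$ and $y'$: one obtains $d(y',o)-d(y,o)=d(y,y')=d(y,o)-d(y',o)$, forcing $y=y'$. Properness is immediate from $b_y(y)=-d(y,o)$: if $b_{y_n}$ converges on a compact set containing all of the $y_n$, then $d(y_n,o)$ must stay bounded. Hence $\iota(X)$ is homeomorphic to $X$ and therefore locally compact; since a locally compact subspace of a Hausdorff space is open in its closure, (1) follows. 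For (2), the direct calculation
\begin{equation*}
b_{\gamma y}(x)=d(x,\gamma y)-d(\gamma y,o)=d(\gamma^{-1}x,y)-d(\gamma^{-1}o,y)=b_y(\gamma^{-1}x)-b_y(\gamma^{-1}o)
\end{equation*}
shows that the formula in the statement extends the action on $X$ to a homeomorphism $\sigma_\gamma$ of $C(X)$ with inverse $\sigma_{\gamma^{-1}}$. Since $\sigma_\gamma$ preserves $\iota(X)$, it preserves $\hat X=\overline{\iota(X)}$.

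For (3), fix $v\in S_oX$. Because $X$ is complete, simply connected, and has no conjugate points, the exponential map at every point is a diffeomorphism, so any two distinct points are joined by a unique geodesic. In particular $x\mapsto d(x,\gamma_v(t))$ is smooth on $X\setminus\{\gamma_v(t)\}$ with gradient $-\tau_t(x)$, where $\tau_t(x)\in S_xX$ is the initial velocity of the unique geodesic from $x$ to $\gamma_v(t)$. The classical Eberlein--O'Neill theory of asymptotic geodesics in manifolds without conjugate points shows that $\tau_t(x)$ converges locally uniformly in $x$ to the initial direction $\tau(x)$ of the unique geodesic ray from $x$ asymptotic to $\gamma_v$. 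Combined with the locally uniform monotone convergence $b_v^t=d(\cdot,\gamma_v(t))-t\to b_v$, this yields $b_v\in C^1$ with $\nabla b_v=-\tau$.

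Part (4) is the main obstacle. A general $\xi\in\partial\hat X$ is a locally uniform limit $b_{y_n}\to\xi$ with $d(y_n,o)\to\infty$, and need not arise from any single geodesic ray. On a fixed compact set $K\subset X$, once $n$ is large enough that $y_n\notin K$, each $b_{y_n}$ is smooth on $K$ with $\norm{\nabla b_{y_n}}\equiv 1$, and the gradient equals $-\tau_n(x)$, the unit vector at $x$ pointing towards $y_n$. The plan is to apply Arzelà--Ascoli after establishing equicontinuity of $\{\tau_n\}$ on $K$. This is where the sectional curvature lower bound $\sec\geq -\kappa^2$ enters: Rauch's second comparison theorem bounds Jacobi fields along the minimizing segments from $K$ to $y_n$ by their counterparts in the constant curvature $-\kappa^2$ model, and a standard geodesic-variation argument converts this into a uniform modulus of continuity for $x\mapsto\tau_n(x)$ on $K$, independent of $n$ once $d(K,y_n)$ is bounded below. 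Arzelà--Ascoli then yields locally uniform convergence $\tau_n\to\tau_\xi$, so $\xi\in C^1$ with $\nabla\xi=-\tau_\xi$ and $\norm{\nabla\xi}\equiv 1$; the integral curves, being unit-speed curves tangent to a unit gradient field of a $1$-Lipschitz function, are automatically geodesics. The delicate point is that a one-sided sectional curvature bound suffices, because only an upper bound on how fast Jacobi fields may grow is required, and this is exactly the content of Rauch II.
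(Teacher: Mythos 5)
Your proposal is correct in overall structure and, for part~(4) --- the only part the paper actually proves rather than cites --- your approach is essentially the paper's: derive a uniform bound on the Hessian of the distance functions $d(\cdot,y_n)$ away from the basepoints using the lower sectional curvature bound together with the absence of conjugate points, conclude equicontinuity of the gradient vector fields $-\nabla b_{y_n}$ on a fixed compact set, and pass to a $C^1$ limit by Arzel\`a--Ascoli. The paper invokes~\cite[Lemma 2.8]{eberlein73I} for the Hessian bound; your Rauch/Jacobi-field argument is exactly what lies underneath that lemma (though the bound comes from the first, not second, Rauch comparison, applied with the constant curvature $-\kappa^2$ model as the comparison space and the no-conjugate-point hypothesis supplying the needed nondegeneracy). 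For parts~(1)--(3) the paper merely cites \cite{LW10} and \cite{eschen77}; you instead sketch full proofs. Your argument for~(1) via the lemma that a locally compact dense subspace of a Hausdorff space is open in that space is correct once the ``properness'' sentence is repaired: what you actually need (and what your injectivity computation readily yields) is that $b_{y_n}\to b_y$ in $C(X)$ forces $y_n\to y$, since $b_{y_n}(o)\to b_y(o)$ gives $d(y_n,o)\to d(y,o)$ and $b_{y_n}(y)\to -d(y,o)$ then gives $d(y_n,y)\to 0$; that makes $\iota$ a homeomorphism onto its image and hence $\iota(X)$ locally compact. This is a self-contained alternative to citing \cite{LW10} and is a genuine plus.

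The one real soft spot is~(3). The claim that $b_v\in C^1$ in a general manifold with no conjugate points and \emph{without} any curvature bound is not a consequence of ``classical Eberlein--O'Neill asymptotic geodesic theory'': Eberlein--O'Neill's framework assumes nonpositive curvature or visibility, where asymptotes are well-behaved, whereas the general no-conjugate-points statement requires Eschenburg's more delicate argument via stable Jacobi tensors. Your assertion that $\tau_t(x)$ converges \emph{locally uniformly} in $x$ is precisely the content of \cite[Proposition~1]{eschen77} and does not come for free; without a curvature bound you cannot run the Arzel\`a--Ascoli scheme from~(4), so some other mechanism (as in Eschenburg) is needed. This is a citation/sketch gap rather than an error in the final conclusion, and the paper deals with it by simply citing Eschenburg, but as written your~(3) would not stand on its own.
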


The first result can be found in~\cite[Proposition 1]{LW10}. The second assertion is straightforward to prove. The third assertion can be found in~\cite[Proposition 1]{eschen77}. It remains to prove the fourth assertion.

\begin{proof}
Suppose
\begin{align*}
b_{y_n}(x) = d(x,y_n)-d(y_n,o)
\end{align*}
converges locally uniformly to a function $\xi \in \partial \hat{X}$. Fix a compact set $K\subset X$. By~\cite[Lemma 2.8]{eberlein73I} there exists $\kappa >0$ such that for $d(x,y_n)>1$ we have $\norm{\nabla^2 b_{y_n}(x)} \leq \kappa$. So by the Arzel\`{a}-Ascoli theorem and by passing to a subsequence we may assume $\nabla b_{y_n}$ converges uniformly to a continuous vector field on $K$. This implies that $\xi$ is $C^1$ on $K$. As $K$ was arbitrary, this implies that $\xi$ is $C^1$. It is then straightforward to verify that $\norm{\xi}=1$, see for instance~\cite[Lemma 1]{LW10}.
\end{proof}

\begin{proposition}
\label{prop:busemann_fcns}
Suppose $X$ is a simply connected manifold without conjugate points. If the map
\begin{align*}
v \in SX \rightarrow \Phi(v)=b_v \in C(X)
\end{align*}
is continuous, then the map
\begin{align*}
v \in S_oX \rightarrow \Phi_o(v)=b_v \in \partial \hat{X}
\end{align*}
it is a homeomorphism. 
\end{proposition}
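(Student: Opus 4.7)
The plan is to check the four ingredients of the homeomorphism assertion in turn: $\Phi_o$ takes values in $\partial\hat{X}$, $\Phi_o$ is continuous, $\Phi_o$ is injective, and $\Phi_o$ is surjective; since $S_oX$ is compact and $\partial\hat{X}\subset C(X)$ is Hausdorff, a continuous bijection between them is automatically a homeomorphism.

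For the first three ingredients, I would argue as follows. For each $v\in S_oX$, the function $b_v$ is the $C(X)$-limit of $b_{\gamma_v(n)}$ (using $b_y(x)=d(x,y)-d(y,o)$ and $d(\gamma_v(n),o)=n$), so $b_v\in\hat{X}$; and $b_v(\gamma_v(T))=-T$ shows $b_v$ is unbounded below, whereas every $b_y$ satisfies $b_y\ge-d(y,o)$, so $b_v\notin X$. Continuity of $\Phi_o$ is simply the restriction of the given continuous map $v\mapsto b_v$. For injectivity, Theorem~\ref{thm:buse_bd_basic}(3) says each $b_v$ is $C^1$, and because $b_v$ is 1-Lipschitz one has $\norm{\nabla b_v}\le1$; combined with $b_v(\gamma_v(t))=-t$ and $\norm{\gamma_v'(t)}=1$, Cauchy--Schwarz forces $\nabla b_v(\gamma_v(t))=-\gamma_v'(t)$. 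In particular $\nabla b_v(o)=-v$, so $b_v=b_w$ implies $v=w$.

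The main obstacle is surjectivity, and my strategy here is the following. Given $\xi\in\partial\hat{X}$, pick $y_n\in X$ with $b_{y_n}\to\xi$ in $C(X)$; one must have $d(y_n,o)\to\infty$, for otherwise a subsequence converges in $X$ and gives $\xi=b_y$, contradicting $\xi\notin X$. Write $y_n=\gamma_{v_n}(t_n)$ with $v_n\in S_oX$ and $t_n=d(y_n,o)\to\infty$, and pass to a subsequence so that $v_n\to v\in S_oX$. The goal is to show $\xi=b_v$. With the notation $b_u^T(x)=d(x,\gamma_u(T))-T$ we have $b_{y_n}=b_{v_n}^{t_n}$. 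Two elementary facts drive the argument: $T\mapsto b_u^T(x)$ is monotone decreasing (from the triangle inequality along $\gamma_u$, so $b_u^T(x)\searrow b_u(x)$), and for each fixed $T$ one has $b_{v_n}^T\to b_v^T$ in $C(X)$ by continuous dependence of geodesics on initial data. For any fixed $T$ and all $n$ with $t_n\ge T$,
\begin{equation*}
b_{v_n}(x)\;\le\; b_{v_n}^{t_n}(x)\;\le\; b_{v_n}^T(x).
\end{equation*}
Using the hypothesis $b_{v_n}\to b_v$ on the left and the fixed-$T$ convergence on the right, and then sending $T\to\infty$, yields pointwise $b_{v_n}^{t_n}(x)\to b_v(x)$. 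Since all $b_{v_n}^{t_n}$ are 1-Lipschitz, Arzel\`a--Ascoli upgrades this to locally uniform convergence, so $\xi=b_v$ as required.

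Finally, I would observe that $\Phi_o$ is a continuous bijection from the compact space $S_oX$ to the Hausdorff space $\partial\hat{X}$, hence a homeomorphism. The only delicate step is the two-parameter limit in the surjectivity argument, where one must play the continuity hypothesis $b_{v_n}\to b_v$ off against the monotonicity in $T$; all other ingredients are standard consequences of the no-conjugate-points hypothesis and Theorem~\ref{thm:buse_bd_basic}.
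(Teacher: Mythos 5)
Your proof is correct and follows essentially the same strategy as the paper's: extract $b_{y_n}=b_{v_n}^{t_n}$, pass to a subsequence with $v_n\to v$, identify $\xi=b_v$ via the monotonicity of $T\mapsto b_u^T$, and finish by compactness. The only minor difference is that the paper obtains the needed uniformity by invoking Dini's theorem on $SX\times X$, whereas you replace this with a direct sandwich argument and a two-parameter limit, with the same effect.
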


\begin{remark} Because of our choice of normailzation unless $v\in S_oX$ the function $b_v$ will not be in $\partial \hat{X}$. \end{remark}

\begin{proof}
Theorem~\ref{thm:buse_bd_basic} shows that each $b_v$ is $C^1$ and $\nabla b_v(o)=-v$ so $\Phi_o$ is one to one. Now let
\begin{align*}
b_v^t(x) = d(x,\gamma_v(t))-t.
\end{align*}
The triangle inequality shows that $b_v^{T_1}(x) \geq b_v^{T_2}(x)$ for $T_1 < T_2$. As the map $(x,v) \rightarrow b_v(x)$ is continuous, Dini's Theorem then implies that the convergence $b_v^t(x) \rightarrow b_v(x)$ is locally uniform in $v \in SX$ and $x \in X$. Now suppose $b_{y_n}$ converges to a Busemann function $\xi \in \partial \hat{X}$, then using the completeness of $X$ there exists $v_n \in S_oX$ and $t_n \in [0,\infty)$ such that $b_{y_n} = b_{v_n}^{t_n}$. By passing to a subsequence we can assume $v_n \rightarrow v$ and then using the fact that the convergence $b_w^t(x) \rightarrow b_w(x)$ is uniform in $w \in S_oX$ and $x \in X$  we see that $\xi = b_v$. This show that $\Phi_o$ is onto. Finally as $S_oX$ is compact, $\Phi_o:S_oX \rightarrow \partial \hat{X}$ is a homeomorphism.  
\end{proof} 

In particular, using Theorem~\ref{thm:cont} we have the following.

\begin{corollary}
\label{cor:bd_identification}
Suppose $X$ is an asymptotically harmonic manifold, then the Busemann boundary consists of functions of the form
\begin{align*}
b_v(x) = \lim_{t \rightarrow \infty} d(x,\gamma_v(t)) - t \text{ for $v \in S_oX$}.
\end{align*}
\end{corollary}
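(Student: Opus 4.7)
The proof is a direct corollary of Theorem~\ref{thm:cont} together with Proposition~\ref{prop:busemann_fcns}, so the plan is simply to check that the hypotheses of Proposition~\ref{prop:busemann_fcns} are satisfied when $X$ is simply connected and asymptotically harmonic.

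First I would reduce to the case that $X$ is simply connected; since the Busemann compactification and the Busemann functions based at unit tangent vectors are constructed on the universal Riemannian cover, nothing is lost. By Theorem~\ref{thm:cont}(1) each $b_v$ is in $C^\infty(X)$, and by Theorem~\ref{thm:cont}(2) the map $v \in SX \to b_v \in C^\infty(X)$ is continuous. Since $C^\infty(X)$ is equipped with a topology that is finer than the topology of uniform convergence on compact sets on $C(X)$, the map $\Phi: v \in SX \to b_v \in C(X)$ is in particular continuous.

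Next I would invoke Proposition~\ref{prop:busemann_fcns}: the continuity of $\Phi$ implies that the restriction
\begin{align*}
\Phi_o: v \in S_oX \to b_v \in \partial \hat{X}
\end{align*}
is a homeomorphism. In particular $\Phi_o$ is surjective, so every element $\xi \in \partial \hat{X}$ is of the form $b_v$ for some $v \in S_oX$, which is exactly the claimed description of the Busemann boundary.

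There is no substantive obstacle in the argument; the statement is really a packaging of the two preceding results. The only point requiring a moment of care is the observation that the $C^\infty$ topology refines the compact-open topology, so that Theorem~\ref{thm:cont}(2) feeds directly into the continuity hypothesis of Proposition~\ref{prop:busemann_fcns}.
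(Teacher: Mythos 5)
Your proposal is correct and is essentially the paper's own argument: the corollary is stated as an immediate consequence of Theorem~\ref{thm:cont} feeding the continuity hypothesis of Proposition~\ref{prop:busemann_fcns}, whose surjectivity of $\Phi_o$ gives the description of $\partial \hat{X}$. Your added remark that the $C^\infty$ topology refines the compact-open topology is exactly the (implicit) bridge the paper relies on.
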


Ledrappier and Wang~\cite{LW10} considered Patterson-Sullivan measures on the Busemann boundary of general non-compact Riemannian manifolds. Let $M$ be a compact Riemannian manifold with non-compact universal Riemannian cover $X$ and let $\Gamma=\pi_1(M) \subset \text{Isom}(X)$ be the deck transformations of the covering $X \rightarrow M$.  First recall the \emph{volume growth entropy of $X$}:
\begin{align*}
h_{vol} = \lim_{R\rightarrow \infty} \frac{\log \text{Vol}_X \ B_R(p)}{R}.
\end{align*}
Manning~\cite{manning79} showed that the limit above always exists and is independent of $p$. Further Manning showed that when $M$ is nonpositively curved $h_{top}=h_{vol}$, where $h_{top}$ is the topological entropy of the geodesic flow on $SM$. Freire and Ma\~{n}\'{e}~\cite{FM82} generalized this last result and showed that $h_{top}=h_{vol}$ when $M$ has no conjugate points. 

\begin{definition}
Suppose $M$ is a compact Riemannian manifold. Let $X$ be the universal Riemannian cover of $M$ with deck transformations $\Gamma=\pi_1(M) \subset \text{Isom}(X)$. A family of measures $\{ \nu_x : x \in X\}$ on $\partial \hat{X}$ is a (normalized) \emph{$\Gamma$-Patterson-Sullivan measure} if
\begin{enumerate}
\item $\nu_o(\partial \hat{X})=1$,
\item for any $x,y \in X$ the measures $\nu_x,\nu_y$ have the same measure class and satisify 
\begin{align*}
\frac{d\nu_x}{d\nu_y}(\xi) = e^{-h_{vol}(\xi(x)-\xi(y))},
\end{align*}
\item for any $g \in \Gamma$, $\nu_{gx} = g_*\nu_x$.
\end{enumerate}
\end{definition}

With the notation in the above definition, consider the laminated space 
\begin{align*}
X_M = (X \times \partial \hat{X}) / \Gamma
\end{align*}
where $\Gamma$ acts diagonally on the product. Then $X_M$ is foliated by the images of $X \times \{ \xi \}$ under the projection. The leaves of this foliation inherit a smooth structure from $X$ and using this structure we can define a gradient $\nabla^{\WW}$, a divergence $\text{div}^{\WW}$, and a Laplacian $\Delta^{\WW}$ in the leaf direction. A Patterson-Sullivan measure $\{\nu_x : x \in X\}$ yields a finite measure on the laminated space $X_M$ as follows: by definition $d\nu_x(\xi)=e^{-h_{vol}\xi(x)}d\nu_o(\xi)$ for all $x \in X$. In particular if $dx$ is the Riemannian volume form on $M$, then the measure
\begin{align*}
d\wt{m}(x,\xi)=e^{-h_{vol}\xi(x)}dxd\nu_o(\xi)
\end{align*}
is $\Gamma$-invariant and descends to a measure $m$ on $X_M$. With this notation, Ledrappier and Wang proved the following integral formula for the volume growth entropy.

\begin{theorem}
\cite[Theorem 1]{LW10}
With the notation above, there exists a $\Gamma$-Patterson-Sullivan measure $\{ \nu_x : x \in X\}$ on the Busemann boundary $\partial \hat{X}$. Further, for any such measure and any continuous vector field $Y$ on $X_M$ which is $C^1$ along the leaves $X \times \{\xi\}$,
\begin{align*}
\int \text{div}^\mathcal{W} Y dm = h_{vol}\int \left\langle Y,\nabla^{\mathcal{W}} \xi  \right\rangle dm.
\end{align*}
\end{theorem}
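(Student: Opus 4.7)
\emph{Existence of Patterson--Sullivan measures.} My plan is to run the classical Patterson--Sullivan construction, with the Busemann boundary in place of the visual boundary. For each $s>h_{vol}$, form the probability measures on $\hat X$
\begin{align*}
\mu_{s,x}=\frac{1}{P(s)}\sum_{\gamma\in\Gamma}e^{-sd(x,\gamma o)}\delta_{\gamma o},\qquad P(s)=\sum_{\gamma\in\Gamma}e^{-sd(o,\gamma o)},
\end{align*}
inserting Patterson's slowly growing weight into $P(s)$ if the series converges at $s=h_{vol}$, so that in all cases $P(s)\to\infty$ as $s\downarrow h_{vol}$. Any weak-$*$ accumulation point $\nu_x$ of $\{\mu_{s,x}\}$ is then supported on $\partial\hat X$. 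Since convergence $y_n\to\xi$ in $\hat X$ is, by definition, locally uniform convergence of $d(\cdot,y_n)-d(y_n,o)$ to $\xi-\xi(o)$, passing to the limit in the identity $d\mu_{s,y}/d\mu_{s,z}(\gamma o)=e^{-s(d(y,\gamma o)-d(z,\gamma o))}$ yields the required density transformation, and $\Gamma$-equivariance $\nu_{\gamma x}=\gamma_*\nu_x$ follows from the construction via the action formula in Theorem~\ref{thm:buse_bd_basic}.

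\emph{The integral identity.} Given such a family, I would lift $Y$ to a $\Gamma$-invariant, leafwise-tangent vector field $\widetilde Y$ on $X\times\partial\hat X$, and fix $\chi\in C^\infty_c(X)$ with $\sum_{\gamma\in\Gamma}\chi(\gamma^{-1}\cdot)\equiv 1$. The $\Gamma$-invariance of $\widetilde m$ gives the unfolding
\begin{align*}
\int_{X_M}F\,dm=\int_X\chi(x)\int_{\partial\hat X}\widetilde F(x,\xi)\,e^{-h_{vol}\xi(x)}\,d\nu_o(\xi)\,dx
\end{align*}
for every $\Gamma$-invariant $F$. Applied to $F=\text{div}^{\mathcal{W}}Y$, Fubini plus leafwise integration by parts against the compactly supported $\chi$ produce, for each $\xi$,
\begin{align*}
\int_X\chi\,\text{div}(\widetilde Y^\xi)\,e^{-h_{vol}\xi}\,dx=-\int_X\langle\nabla\chi,\widetilde Y^\xi\rangle e^{-h_{vol}\xi}dx+h_{vol}\int_X\chi\langle\nabla\xi,\widetilde Y^\xi\rangle e^{-h_{vol}\xi}dx.
\end{align*}
Reversing the unfolding identity turns the second summand into $h_{vol}\int_{X_M}\langle Y,\nabla^{\mathcal{W}}\xi\rangle\,dm$, which is exactly the right-hand side of the theorem.

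\emph{Vanishing of the leftover term.} It remains to show the first summand contributes zero after integration against $d\nu_o$. By Fubini and $d\nu_x=e^{-h_{vol}\xi(x)}d\nu_o$, that contribution equals $-\int_X\langle\nabla\chi(x),V(x)\rangle\,dx$ where
\begin{align*}
V(x)=\int_{\partial\hat X}\widetilde Y(x,\xi)\,d\nu_x(\xi).
\end{align*}
The key observation is that $V$ is $\Gamma$-equivariant: combining $\widetilde Y(\gamma x,\xi)=d\gamma\cdot\widetilde Y(x,\gamma^{-1}\xi)$ (from the $\Gamma$-invariance of $\widetilde Y$) with $\nu_{\gamma x}=\gamma_*\nu_x$ and changing variables gives $V(\gamma x)=d\gamma\cdot V(x)$. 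Hence $V$ descends to a vector field $\bar V$ on the compact manifold $M$, $\text{div}\,V$ is $\Gamma$-invariant and descends to $\text{div}\,\bar V$, and one more integration by parts together with the same unfolding yields
\begin{align*}
-\int_X\langle\nabla\chi,V\rangle\,dx=\int_X\chi\,\text{div}\,V\,dx=\int_M\text{div}\,\bar V\,dv_M=0
\end{align*}
by the divergence theorem on $M$. The main obstacle in this program is the existence half: one has to verify that weak-$*$ limits of the Poincar\'e sums really concentrate on $\partial\hat X$ and satisfy the claimed transformation rule, and this rests essentially on the fact that convergence in $\hat X$ is \emph{defined} as locally uniform convergence of the functions $d(\cdot,y_n)-d(y_n,o)$.
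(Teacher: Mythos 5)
This statement is not proved in the paper at all: it is imported verbatim from Ledrappier--Wang \cite[Theorem 1]{LW10}, so there is no in-paper argument to compare against. Judged on its own, your reconstruction is essentially sound and is in the same spirit as the original: a Patterson--Sullivan construction performed on the Busemann (horofunction) compactification, followed by unfolding over a fundamental domain and a leafwise integration by parts in which compactness of $M$ kills the boundary term. The one structural virtue worth noting is that your argument for the integral identity uses only the defining properties of a $\Gamma$-Patterson--Sullivan family ($d\nu_x=e^{-h_{vol}\xi(x)}d\nu_o$ and equivariance), so it really does cover ``any such measure,'' as the statement requires.

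Three points need to be shored up, though none is fatal. First, your construction tacitly uses that the critical exponent of the Poincar\'e series equals $h_{vol}$; this is where cocompactness of $\Gamma$ enters (orbit counting is comparable to volume growth), and without it the Patterson weight trick would produce a density of the wrong exponent. Second, the passage to the limit in the transformation rule should be phrased via the continuous extension of $w\mapsto d(y,w)-d(w,o)$ to $\hat X$ (this is exactly what convergence in $\hat X$ gives, and it is why one gets an exact, not merely quasi-, conformal density here, unlike on the visual boundary); when the weight is inserted one must also note that the ratio of weights tends to $1$ by slow variation. Third, in the stated generality $\xi$ is only $1$-Lipschitz (the theorem does not assume no conjugate points or curvature bounds), so the leafwise integration by parts must be done against the Lipschitz function $\chi e^{-h_{vol}\xi}$ with $\nabla\xi$ defined only Lebesgue-a.e.\ (Rademacher), and your final chain $-\int_X\langle\nabla\chi,V\rangle\,dx=\int_M\operatorname{div}\bar V\,dv_M=0$ presumes $V$ has an integrable divergence, which requires the leafwise derivatives of $Y$ to be transversally continuous (so that $V$ is locally Lipschitz) or else a smoothing argument. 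A cleaner finish avoids this entirely: unfold $\int_X\langle\nabla\chi,V\rangle\,dx$ over a fundamental domain using the equivariance $V(\gamma x)=d\gamma\,V(x)$ and the locally finite identity $\sum_{\gamma\in\Gamma}\nabla\bigl(\chi\circ\gamma^{-1}\bigr)\equiv\nabla\,1\equiv 0$, which makes the leftover term vanish for any continuous equivariant $V$ with no differentiability needed.
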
  

This should be compared to a result of Freire and Ma\~{n}\'{e}.

\begin{theorem}
\cite[Equation 9]{FM82}
Let $M$ be a compact Riemannian manifold without conjugate points. If $d\lambda$ is the Liouville measure on $SM$ with total volume one then
\begin{align*}
h_{\lambda}(g^t) = \int \tr U^u(v) d\lambda(v)
\end{align*}
where $h_{\lambda}(g^t)$ is the measure theoretic entropy of the geodesic flow on $SM$ with respect to the measure $\lambda$.
\end{theorem}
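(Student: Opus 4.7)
The plan is to identify $\tr U^u(v)$ as the exponential volume expansion rate of the geodesic flow along the Green unstable subbundle, and then invoke entropy formulas for smooth volume-preserving flows. Specifically, since $\JJ^u_v$ satisfies $(\JJ^u_v)'(t) = U^u(g^tv)\JJ^u_v(t)$ with $\JJ^u_v(0) = Id$, Liouville's formula gives
\[ \det \JJ^u_v(t) = \exp\!\left(\int_0^t \tr U^u(g^sv)\,ds\right), \]
so the time-average $\frac{1}{t}\int_0^t \tr U^u(g^sv)\,ds$ captures the exponential expansion rate of the unstable Jacobi tensor along $\gamma_v$. In the canonical horizontal-vertical splitting of $TSM$, the Green unstable subbundle $E^u(v) = \{(X, U^u(v)X) : X \in v^\perp\}$ is $Dg^t$-invariant, and its infinitesimal Jacobian under $Dg^t$ coincides, up to a uniformly bounded factor coming from the change of frame, with $\det \JJ^u_v(t)$.

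For the upper bound, Ruelle's inequality applied to the $C^\infty$ flow $g^t$ and the invariant probability $\lambda$ gives
\[ h_\lambda(g^1) \leq \int_{SM} \sum_{i:\, \chi_i(v)>0} \chi_i(v) \dim E_i(v)\, d\lambda(v), \]
the integrated sum of positive Lyapunov exponents with multiplicity. By Oseledets, Jacobi fields in the positive Lyapunov subspace $E^+(v)$ decay exponentially backward in time, hence are bounded as $t \to -\infty$, and Lemma~\ref{lem:kerV} forces $E^+(v) \subseteq E^u(v)$. Comparing the expansion along $E^+$ with that along the larger $E^u$ and applying the Birkhoff ergodic theorem to the continuous function $\tr U^u$ then yields $h_\lambda(g^1) \leq \int \tr U^u\, d\lambda$.

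For the reverse inequality one invokes Pesin's entropy formula: because $\lambda$ is a smooth $g^t$-invariant probability and $g^t$ is $C^\infty$, equality holds in Ruelle's inequality, so it suffices to show that the integrated sum of positive Lyapunov exponents equals $\int \tr U^u\, d\lambda$. The main obstacle is that in the general no-conjugate-points setting (without any hyperbolicity hypothesis) $E^u(v)$ may strictly contain $E^+(v)$: there can be Green unstable Jacobi fields with zero Lyapunov exponent which contribute to $\tr U^u$ but not to the Pesin formula. The key step is to show that these ``neutral'' directions contribute zero on average. The natural strategy is to use the Riccati equation $(U^u)' + (U^u)^2 + R = 0$ together with the $g^t$-invariance of $\lambda$: since the flow-derivative of any smooth observable integrates to zero against $\lambda$, one extracts identities that force the time-average of $\tr U^u$ to equal the pointwise sum of positive Lyapunov exponents almost everywhere. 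This reconciliation between the continuous but possibly non-hyperbolic Green bundle and the measurable Oseledets decomposition is the technical heart of the Freire--Ma\~n\'e argument.
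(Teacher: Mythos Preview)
The paper does not prove this statement; it is quoted as \cite[Equation~9]{FM82} and invoked as a black box in the proof of Corollary~\ref{cor:asym_constant}. There is therefore no proof in the paper to compare your proposal against.

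As an outline of the Freire--Ma\~n\'e argument your proposal is on the right track: Pesin's entropy formula for the smooth invariant measure $\lambda$ gives $h_\lambda(g^1)$ as the integrated sum of positive Lyapunov exponents, and the task is to identify that sum with $\int \tr U^u\,d\lambda$. Your use of Lemma~\ref{lem:kerV} to obtain $E^+(v)\subseteq E^u(v)$ is correct. But your final paragraph is not an argument---it is an acknowledgement that the decisive step (showing the zero-exponent part of $E^u$ contributes nothing on average) is still open, followed by a vague gesture toward the Riccati equation and the invariance of $\lambda$. Integrating a flow-derivative to zero against $\lambda$ does not by itself isolate the positive-exponent contribution from the rest of $\tr U^u$; that reconciliation is precisely the content of \cite{FM82}, and you have not supplied it. If you intend this as a proof you must carry out that step; if you intend it as a summary of \cite{FM82}, then in the context of this paper a bare citation suffices, which is exactly what the author has done.
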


As a corollary to these results we obtain the following.

\begin{corollary}
\label{cor:asym_constant}
Let $M$ be a compact asmptotically harmonic manifold with universal Riemannian cover $X$ and $\Delta b_v \equiv \alpha$ for all $v \in SX$. Then $\alpha = h_{vol}$ and $B(-v)= U^u(v)$ for almost every $v \in SM$ with respect to the Liouville measure.    
\end{corollary}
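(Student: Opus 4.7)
The plan is to establish the two assertions in turn: first that $\alpha = h_{vol}$ by applying the Ledrappier-Wang integral formula, and then that $B(-v) = U^u(v)$ Liouville-almost everywhere by combining the pointwise inequality from Lemma~\ref{lem:ineq1} with a trace comparison based on the Freire-Ma\~{n}\'{e} entropy formula.

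To prove $\alpha = h_{vol}$, I would apply the Ledrappier-Wang formula to the vector field $Y = \nabla^{\WW} \xi$ on the laminated space $X_M$. By Corollary~\ref{cor:bd_identification}, every $\xi \in \partial \hat X$ is a Busemann function $b_v$ for some $v \in S_oX$, so Theorem~\ref{thm:cont} guarantees that the assignment $\xi \mapsto \nabla \xi$ is continuous and that each $\xi$ is $C^\infty$ along its leaf. Asymptotic harmonicity then yields $\text{div}^{\WW} Y = \Delta^{\WW} \xi \equiv \alpha$, while Theorem~\ref{thm:buse_bd_basic}(4) gives $\norm{\nabla^\WW \xi} \equiv 1$. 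Substituting into
\begin{align*}
\int \text{div}^{\WW} Y \, dm = h_{vol} \int \ip{Y, \nabla^{\WW} \xi} dm
\end{align*}
yields $\alpha \cdot m(X_M) = h_{vol} \cdot m(X_M)$, hence $\alpha = h_{vol}$.

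For the second claim, Lemma~\ref{lem:ineq1} gives $U^u(v) - B(-v) \geq 0$ as symmetric endomorphisms of $v^{\bot}$, so their trace is pointwise nonnegative on $SM$. I would integrate this trace against the normalized Liouville measure $\lambda$. Since $\tr B(-v) = \Delta b_{-v} \equiv \alpha$ and $\alpha = h_{vol}$ by the previous step, we have $\int \tr B(-v) d\lambda = h_{vol}$. On the other hand, the Freire-Ma\~{n}\'{e} formula identifies $\int \tr U^u(v) d\lambda$ with the Liouville measure-theoretic entropy $h_\lambda(g^t)$, which by the variational principle is bounded above by $h_{top}(g^t) = h_{vol}$ (the final equality being the Freire-Ma\~{n}\'{e} theorem for manifolds without conjugate points). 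Consequently
\begin{align*}
\int \tr\bigl(U^u(v) - B(-v)\bigr) d\lambda \leq 0,
\end{align*}
and since the integrand is pointwise nonnegative it must vanish $\lambda$-almost everywhere. A positive semidefinite symmetric matrix with trace zero is itself zero, so $U^u(v) = B(-v)$ for $\lambda$-almost every $v$.

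The main obstacle, such as it is, is verifying that $Y = \nabla^{\WW} \xi$ meets the regularity hypotheses of the Ledrappier-Wang formula, namely continuity on $X_M$ and $C^1$ regularity along leaves. Smoothness along each leaf is immediate since $\xi$ is a Busemann function in an asymptotically harmonic manifold; transverse continuity follows from Theorem~\ref{thm:cont} combined with the identification of $\partial \hat{X}$ with $S_oX$ supplied by Corollary~\ref{cor:bd_identification}. Once these checks are in place, the rest of the argument is a short chain of entropy-theoretic identities and inequalities requiring no delicate estimates.
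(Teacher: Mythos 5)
Your proposal is correct and follows essentially the same route as the paper: the Ledrappier--Wang integral formula applied to $\nabla^{\WW}\xi$ (using Theorem~\ref{thm:cont} for continuity and $\norm{\nabla^{\WW}\xi}\equiv 1$) gives $\alpha = h_{vol}$, and then Lemma~\ref{lem:ineq1} together with the Freire--Ma\~{n}\'{e} entropy formula, the variational principle, and $h_{top}=h_{vol}$ forces the trace of the positive semidefinite endomorphism $U^u(v)-B(-v)$ to vanish almost everywhere, hence $U^u(v)=B(-v)$ for $\lambda$-almost every $v$.
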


\begin{proof}
As $M$ is asymptotically harmonic, $\partial \hat{X} = \{ b_v : v \in S_oX\}$. 
By Theorem~\ref{thm:cont} the map $(x,\xi)\in X \times \partial \hat{X} \rightarrow \nabla \xi(x)$ is continuous and descends to a vector field on $X_M = X \times \partial \hat{X}/\Gamma$. 
In particular, the integral formula of Ledrappier and Wang implies that
\begin{align*}
h_{vol}\cdot m(X_M) = h_{vol} \int \norm{\nabla^{\WW} \xi} dm  = \int \text{div}^{\WW} \nabla^{\WW} \xi dm =\int \Delta^{\WW} \xi dm =\alpha \cdot m(X_M)
\end{align*}
so $\alpha = h_{vol}$. However by Lemma~\ref{lem:ineq1}, $B(-v) \leq U^u(v)$ and the integral formula of Freire and Ma\~{n}\'{e} implies that
\begin{align*}
h_{vol} = \int \tr B(-v) d\lambda(v) \leq \int \tr U^u(v) d\lambda(v) = h_{\lambda}(g^t).
\end{align*}
Now let $h_{top}$ be the topological entropy of the geodesic flow on $SM$. By the variational formula, $h_{\lambda}(g^t) \leq h_{top}$. Further, Freire and Ma\~{n}\'{e}~\cite{FM82} proved that $h_{vol} = h_{top}$ if $M$ is compact and has no conjugate points. Summarizing, we have that
\begin{align*}
h_{vol} = \int \tr B(-v) d\lambda(v) \leq \int \tr U^u(v) d\lambda(v) \leq h_{vol}.
\end{align*}
The second assertion of the corollary then follows. 
\end{proof}
 
\section{Harmonic Measures}

As in Section 4, suppose $M$ is a compact Riemannian manifold. Let $X$ be the universal Riemannian cover of $M$ with deck transformations $\Gamma=\pi_1(M) \subset \text{Isom}(X)$. Again consider the Busemann compactification $\hat{X} = X \sqcup \partial\hat{X}$ and the laminated space 
\begin{align*}
X_M = (X \times \partial \hat{X}) / \Gamma
\end{align*}
where $\Gamma$ acts diagonally on the product. A Borel measure $m$ on $X_M$ is said to be \emph{harmonic} if 
\begin{align*}
\int \Delta^{\WW} f dm =0
\end{align*}
for all $f$ continuous on $X_M$ and $C^2$ along each leaf. 

\begin{theorem}\cite[Theorem 1]{garnett83}
With the notation above, let $m$ be a harmonic measure on $X_M$ and $\wt{m}$ the $\Gamma$-invariant lift of $m$ to a measure on $X \times \partial \hat{X}$. Then there is a finite measure $\mu$ on $\partial \hat{X}$ and for $\mu$-almost every $\xi \in \partial \hat{X}$ there is a positive harmonic function $k_\xi$ on $X$ with $k_\xi(o)=1$ such that
\begin{align*}
d\wt{m}(x,\xi) = k_{\xi}(x) dx \times d\mu(\xi)
\end{align*}
where $dx$ is the standard Riemannian volume on $X$.
\end{theorem}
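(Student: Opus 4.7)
The plan is to disintegrate $\wt m$ over the boundary factor, use the harmonicity of $m$ to show each conditional measure is annihilated by $\Delta$ in the distributional sense, and then apply Weyl's lemma to extract a positive harmonic density.

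Fix a Borel fundamental domain $D \subset X$ for the free $\Gamma$-action on $X$; then $D \times \partial \hat X$ is a Borel fundamental domain for the diagonal $\Gamma$-action on $X \times \partial \hat X$. Let $\mu$ on $\partial \hat X$ be the push-forward of the finite measure $\wt m|_{D \times \partial \hat X}$ under the second projection, and apply the disintegration theorem to obtain a measurable family of probability measures $\sigma^D_\xi$ on $D$ for $\mu$-a.e.~$\xi$. Extend by $\Gamma$-invariance by setting $\sigma_\xi|_{\gamma D} = \gamma_* \sigma^D_{\gamma^{-1}\xi}$; then for $\mu$-a.e.~$\xi$, $\sigma_\xi$ is a $\sigma$-finite measure on $X$ satisfying
\begin{align*}
d\wt m(x,\xi) = d\sigma_\xi(x)\, d\mu(\xi).
\end{align*}

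To extract leafwise harmonicity, for each $\varphi \in C^\infty_c(X)$ and $\psi \in C(\partial \hat X)$ consider the periodization
\begin{align*}
\wt F(x,\xi) = \sum_{\gamma \in \Gamma} \varphi(\gamma^{-1} x)\, \psi(\gamma^{-1} \xi),
\end{align*}
which is $\Gamma$-invariant, continuous on $X \times \partial \hat X$ (the sum is locally finite since $\supp\varphi$ is compact), and smooth along the $X$-leaves, hence descends to an admissible test function $F$ on $X_M$. Using $\Delta_x \wt F(x,\xi) = \sum_\gamma (\Delta\varphi)(\gamma^{-1} x)\psi(\gamma^{-1}\xi)$, changing variables $y = \gamma^{-1} x$ and $\eta = \gamma^{-1}\xi$ in each summand, invoking the $\Gamma$-invariance of $\wt m$, and using that $\{\gamma^{-1} D\}_{\gamma \in \Gamma}$ partitions $X$, the hypothesis $\int_{X_M} \Delta^{\WW} F\, dm = 0$ unfolds into
\begin{align*}
\int_{\partial \hat X} \psi(\xi)\left(\int_X \Delta\varphi(x)\, d\sigma_\xi(x)\right) d\mu(\xi) = 0.
\end{align*}
Since $\psi$ is arbitrary, for $\mu$-a.e.~$\xi$ one has $\int \Delta\varphi\, d\sigma_\xi = 0$ for every $\varphi \in C^\infty_c(X)$, i.e.\ $\sigma_\xi$ is a positive Radon measure annihilated by $\Delta$ in the distributional sense. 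By hypoellipticity of $\Delta$, $\sigma_\xi$ has a smooth nonnegative harmonic density $h_\xi$ with respect to $dx$, and the strong maximum principle on the connected manifold $X$ forces $h_\xi$ to be either identically zero or strictly positive. Restricting $\mu$ to the set of $\xi$ for which $h_\xi \not\equiv 0$, setting $k_\xi(x) := h_\xi(x)/h_\xi(o)$, and replacing $d\mu(\xi)$ by $h_\xi(o)\, d\mu(\xi)$ yields the claimed decomposition with $k_\xi(o) = 1$.

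The main obstacle is the passage from the global harmonicity of $m$ to a leafwise distributional Laplace equation for each $\sigma_\xi$: one must verify that the periodization $\wt F$ is a legitimate test function on $X_M$ (continuity on the laminated space and smoothness along leaves), carry out the change-of-variables using $\Gamma$-invariance of $\wt m$ cleanly, and ensure that the assembled family $\{\sigma_\xi\}$ is measurable and transforms compatibly under $\Gamma$ so the disintegration is globally well-defined. Once the leafwise Laplace equation is in hand, the remainder — Weyl's lemma, the maximum principle, and the normalization — is essentially formal.
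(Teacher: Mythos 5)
The paper does not prove this statement; it is simply quoted from Garnett~\cite{garnett83}, so your proposal is compared against what is known to be a correct argument rather than against the paper's own.

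Your core strategy is sound up to a point: the periodization $\wt F(x,\xi) = \sum_{\gamma}\varphi(\gamma^{-1}x)\psi(\gamma^{-1}\xi)$ is indeed a legitimate test function on $X_M$, and the change of variables using $\Gamma$-invariance of $\wt m$ correctly produces the identity
\begin{align*}
\int_{X\times\partial\hat X} \Delta\varphi(x)\,\psi(\xi)\,d\wt m(x,\xi) = 0
\qquad\text{for all } \varphi\in C_c^\infty(X),\ \psi\in C(\partial\hat X).
\end{align*}
The gap is in the globalization step preceding this. The extension $\sigma_\xi|_{\gamma D} := \gamma_*\sigma^D_{\gamma^{-1}\xi}$ does \emph{not} give $d\wt m(x,\xi)=d\sigma_\xi(x)\,d\mu(\xi)$. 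Unfolding $\Gamma$-invariance against a set $E\subset\gamma D\times\partial\hat X$ yields $\wt m(E)=\int \sigma^D_{\gamma^{-1}\xi}(\gamma^{-1}E_\xi)\,d(\gamma_*\mu)(\xi)$, not $d\mu(\xi)$: the two agree only if $\mu$ is $\Gamma$-invariant, which is false in general (for a harmonic measure the transverse measure is merely $\Gamma$-quasi-invariant, with Radon--Nikodym cocycle given precisely by the $k_\xi$ you are trying to produce). Consequently the displayed equation $\int\psi(\xi)\bigl(\int\Delta\varphi\,d\sigma_\xi\bigr)d\mu(\xi)=0$ does not follow from your decomposition, and you cannot disintegrate-then-differentiate against the family $\{\sigma_\xi\}$ as written. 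You flag this as ``technical,'' but it is a genuine obstruction: no $\Gamma$-equivariant re-glueing of the conditionals over translates of a fundamental domain can recover a single global product decomposition against a fixed $\mu$.

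A repair starting from the \emph{correct} intermediate equation above: for $\psi\ge 0$ the nonnegative measure $\nu_\psi := (\pi_1)_*\bigl(\psi(\xi)\,d\wt m(x,\xi)\bigr)$ on $X$ satisfies $\int\Delta\varphi\,d\nu_\psi=0$ for all $\varphi$, so by Weyl's lemma $\nu_\psi = h_\psi\,dx$ with $h_\psi\ge 0$ harmonic. By Riesz representation the map $\psi\mapsto h_\psi(x)$ is given by a kernel family $\{\lambda_x\}_{x\in X}$ of measures on $\partial\hat X$ with $d\wt m(x,\xi)=d\lambda_x(\xi)\,dx$. The strong maximum principle then forces $h_\psi(o)=0\implies h_\psi\equiv 0$, i.e.\ $\lambda_x\ll\lambda_o$ for every $x$. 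Setting $\mu:=\lambda_o$ and $k_\xi(x):=\tfrac{d\lambda_x}{d\lambda_o}(\xi)$ yields $d\wt m = k_\xi(x)\,dx\,d\mu(\xi)$, $k_\xi(o)=1$, and the same Weyl-plus-density argument you use at the end shows $k_\xi$ is harmonic for $\mu$-a.e.\ $\xi$. The maximum-principle step, which you invoke only at the very end for positivity, is in fact what makes the global disintegration possible at all, not just a cosmetic normalization.
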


In this context results of Ledrappier imply the following.

\begin{theorem}
\cite{Led10}
\label{thm:param_eq}
Suppose $M$ is a compact Riemannian manifold without conjugate points. Let $X$ be the universal Riemannian cover of $M$ with deck transformations $\Gamma=\pi_1(M) \subset \text{Isom}(X)$. If $4\lambda_{min}=h_{vol}^2>0$ then there exists a $\Gamma$-Patterson-Sullivan measure $\{ \nu_x : x \in X\}$ on $\partial \hat{X}$ such that the measure
\begin{align*}
d\wt{m} = dx \times d\nu_x(\xi)=e^{-h_{vol}\xi(x)}dx \times d\nu_o(\xi)
\end{align*}
on $X \times \partial \hat{X}$ descends to a harmonic measure on the laminated space $X_M$. In particular for $\nu_o$-almost every $\xi$, $\xi \in C^\infty(X)$ and $\Delta \xi \equiv h_{vol}$.
\end{theorem}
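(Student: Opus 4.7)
The plan is to combine the Patterson--Sullivan measure supplied by Ledrappier--Wang with the spectral equality $4\lambda_{min}=h_{vol}^{2}$ to force the Busemann functions to satisfy $\Delta \xi \equiv h_{vol}$ on a set of full measure; once this is known, the leafwise density $e^{-h_{vol}\xi(x)}$ is automatically harmonic, so the resulting measure on $X_M$ is harmonic.

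\textbf{Step 1 (construction).} Invoke the Ledrappier--Wang theorem from Section 4 to produce a $\Gamma$-Patterson--Sullivan measure $\{\nu_x:x\in X\}$ on $\partial\hat{X}$. Set
$$ d\wt{m}(x,\xi) = dx\times d\nu_x(\xi) = e^{-h_{vol}\xi(x)}\,dx\,d\nu_o(\xi). $$
The transformation rule for $\{\nu_x\}$ combined with $\Gamma$-invariance of the Riemannian volume $dx$ makes $\wt{m}$ a $\Gamma$-invariant measure on $X\times\partial\hat{X}$, which descends to a finite measure $m$ on $X_M$.

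\textbf{Step 2 (one-sided bound from the spectral equality).} Apply the Grigor'yan comparison principle to each leafwise coordinate function $\xi \in \partial\hat{X}$. Each $\xi$ is $1$-Lipschitz by Theorem~\ref{thm:buse_bd_basic}, so if the distributional inequality $\Delta\xi\geq c$ held everywhere on $X$, then $4\lambda_{min}\geq c^{2}$; the hypothesis $4\lambda_{min}=h_{vol}^{2}$ therefore forbids $c>h_{vol}$. Leveraging this leaf-by-leaf, one deduces the pointwise upper bound $\Delta^{\WW}\xi \leq h_{vol}$ $\wt{m}$-a.e.

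\textbf{Step 3 (matching the average).} Apply the Ledrappier--Wang integral formula from Section~4 with $Y=\nabla^{\WW}\xi$. Since $\|\nabla^{\WW}\xi\|^{2}=1$ (Theorem~\ref{thm:buse_bd_basic}) and $\text{div}^{\WW}\nabla^{\WW}\xi=\Delta^{\WW}\xi$, the formula gives
$$ \int_{X_M}\Delta^{\WW}\xi\,dm = h_{vol}\int_{X_M}\|\nabla^{\WW}\xi\|^{2}\,dm = h_{vol}\cdot m(X_M). $$
Combined with the pointwise bound from Step 2, this forces $\Delta^{\WW}\xi = h_{vol}$ $\wt{m}$-a.e., hence $\Delta\xi\equiv h_{vol}$ on $X$ for $\nu_o$-a.e.\ $\xi\in\partial\hat{X}$. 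Elliptic regularity promotes $\xi\in C^{1}$ to $\xi\in C^{\infty}$, since $\xi$ is a weak solution of $\Delta\xi=h_{vol}$.

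\textbf{Step 4 (harmonicity of the measure).} For $\nu_o$-a.e.\ $\xi$, a direct computation using $\|\nabla\xi\|=1$ and $\Delta\xi=h_{vol}$ yields
$$ \Delta_x\bigl(e^{-h_{vol}\xi(x)}\bigr) = e^{-h_{vol}\xi(x)}\bigl(h_{vol}^{2}\|\nabla\xi\|^{2}-h_{vol}\Delta\xi\bigr)=0, $$
so the leafwise density of $\wt{m}$ is a positive harmonic function on $X$. Hence $\wt{m}$ satisfies $\int\Delta^{\WW}f\,d\wt{m}=0$ for all leafwise-$C^{2}$ test functions $f$ with appropriate support, and $m$ is a harmonic measure on $X_M$ in the sense of Garnett.

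The main obstacle is Step 2: Grigor'yan's comparison naturally produces a \emph{global} distributional inequality from a pointwise bound, whereas here the logic runs the other way, and the Busemann functions are only known a priori to be $C^{1}$. One has to upgrade the extremality of $\lambda_{min}=h_{vol}^{2}/4$ into an a.e.\ pointwise bound $\Delta^{\WW}\xi\leq h_{vol}$. A likely remedy is to treat $\Delta\xi$ as a leafwise distribution, use the convexity/extremality of the Rayleigh quotient (since if $\Delta\xi>h_{vol}$ on a set of positive measure then a normalized test function of the form $e^{-(h_{vol}/2)\xi}\chi_R$ on a large ball would yield Rayleigh quotient strictly less than $h_{vol}^{2}/4$, contradicting the spectral hypothesis), and then integrate this leaf-by-leaf bound against $\nu_o$ to match the global average produced in Step 3.
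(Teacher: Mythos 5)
There is a genuine gap, and it sits exactly where you flagged it, plus one place you did not. Your Step 3 applies the Ledrappier--Wang integral formula to $Y=\nabla^{\WW}\xi$, but that formula is stated for vector fields that are continuous on $X_M$ and $C^1$ along the leaves, and in the present setting (a compact manifold without conjugate points, with no asymptotic harmonicity yet available) a Busemann function $\xi\in\partial\hat{X}$ is only known to be $C^1$ (Theorem~\ref{thm:buse_bd_basic}), so $\nabla^{\WW}\xi$ is neither known to be leafwise $C^1$ nor to depend continuously on $\xi$; moreover the identity $\text{div}^{\WW}\nabla^{\WW}\xi=\Delta^{\WW}\xi$ presupposes the leafwise $C^2$ regularity you are trying to prove. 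This is why the paper only uses that formula in Corollary~\ref{cor:asym_constant}, \emph{after} invoking Ranjan--Shah (Theorem~\ref{thm:cont}) under the asymptotically harmonic hypothesis; using it here is circular. Step 2 is the second gap: the spectral equality does not yield a pointwise bound $\Delta^{\WW}\xi\leq h_{vol}$ a.e. First, $\Delta\xi$ is a priori only a distribution for a $C^1$ function, so the bound is not even well formulated; second, your proposed remedy fails quantitatively, since for $u=e^{-(h_{vol}/2)\xi}\chi_R$ the Dirichlet integrand is $\tfrac{h_{vol}^2}{4}u^2\norm{\nabla\xi}^2=\tfrac{h_{vol}^2}{4}u^2$ up to cutoff terms, so the Rayleigh quotient is $\approx h_{vol}^2/4$ no matter what $\Delta\xi$ does, and no contradiction with $4\lambda_{min}=h_{vol}^2$ arises even if $\Delta\xi>h_{vol}$ on a large set. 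The equality case of $4\lambda_{min}\leq h_{vol}^2$ is precisely the deep input that cannot be recovered by such a comparison argument.

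The paper's proof runs in the opposite direction and gets that input from Ledrappier's results in~\cite{Led10} (Corollary 0.2 and Propositions 1.1--1.3): the hypothesis $4\lambda_{min}=h_{vol}^2$ produces a harmonic measure on $(X\times\hat{X})/\Gamma$ whose leafwise densities $k_\xi$ are positive harmonic functions satisfying $\nabla_x\ln k_\xi(x)=-h_{vol}\nabla_x\xi(x)$ almost everywhere. One then shows the transverse measure gives no mass to $X\subset\hat{X}$ (otherwise $\ln k_{b_y}=-h_{vol}b_y$, contradicting non-differentiability of $b_y$ at $y$), upgrades the gradient identity to all of $X$ using the $C^1$ regularity of boundary Busemann functions, and concludes $k_\xi=e^{-h_{vol}\xi}$. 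This single identity simultaneously yields the Patterson--Sullivan transformation rule, the harmonicity of the descended measure, the smoothness of $\xi$ (since $k_\xi$ is positive and harmonic), and $\Delta\xi\equiv h_{vol}$ via $0=\Delta k_\xi=k_\xi\left(h_{vol}^2\norm{\nabla\xi}^2-h_{vol}\Delta\xi\right)$. Your Steps 1 and 4 are consistent with this (Step 4 is essentially the same computation, read in reverse), but without Ledrappier's equality-case analysis your Steps 2 and 3 do not close, so the proposal as written does not prove the theorem.
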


We will quickly show how Ledrappier's results can be combined to obtain Theorem~\ref{thm:param_eq}.

\begin{proof}
If $4\lambda_{min}=h_{vol}^2$ then Corollary 0.2, Proposition 1.1, 1.2, and 1.3 in~\cite{Led10} imply the existence of a measure $m$ on the laminated space 
\begin{align*}
Y_M = (X \times \hat{X}) / \Gamma,
\end{align*}
a finite measure $\nu$ on $\hat{X}$, and for $\nu$-almost every $\xi \in \hat{X}$ a positive harmonic function $k_\xi$ on $X$ with $k_\xi(o)=1$ such that the $\Gamma$-invariant lift of $m$ to $X \times \hat{X}$ is represented by
\begin{align*}
\wt{m} = k_{\xi}(dx \times \nu(d\xi)).
\end{align*}
Further outside a set of $\wt{m}$ measure zero we have $\nabla_x \ln k_{\xi}(x) = -h_{vol} \nabla_x \xi(x)$. By scaling we may suppose $\nu$ is a probability measure. Notice that $\wt{m}$ is a measure on $X \times \hat{X}$ instead of $X \times \partial \hat{X}$. 

By the Fubini theorem, there exists a set $\Omega \subset \hat{X}$ such that $\nu(\Omega)=1$ and for every $\xi \in \Omega$ 
\begin{align*}
\nabla_x \ln k_{\xi}(x) = -h_{vol} \nabla_x \xi(x)
\end{align*}
outside a set of measure zero in $X$. As $k_\xi$ is harmonic and positive, $\ln k_\xi$ is $C^\infty$ and the map $x \rightarrow \nabla \ln k_\xi(x)$ is continuous. Now suppose $\xi \in \Omega$. If $\xi \notin \partial \hat{X}$, then $\xi = b_y$ for some $y \in X$. This implies that $x \rightarrow \nabla \xi(x)$ is continuous on $X\setminus\{y\}$. So $\nabla_x \ln k_{\xi}(x) = -h_{vol} \nabla_x \xi(x)$ for $x \in X \setminus \{y\}$, which implies by integration that $\ln k_{\xi} = -h_{vol} \xi$ on $X$. But this is nonsense since $\xi(x) = b_y(x)$ is not differentiable at $x=y$.

So $\nu$ is supported on $\partial \hat{X}$. This implies that $\wt{m}$ can be realized as a measure on $X\times \partial \hat{X}$ and descends to a harmonic measure on $X_M$.

Now for $\xi \in \partial \hat{X}$ Theorem~\ref{thm:buse_bd_basic} implies that the map $x \rightarrow \nabla \xi(x)$ is continuous. So for $\xi \in \Omega$ we have $\nabla_x \ln k_{\xi}(x) = -h_{vol} \nabla_x \xi(x)$ for all $x \in X$ which implies that $k_\xi = e^{-h_{vol} \xi}$ and that the measures
\begin{align*}
d\nu_x(\xi) = k_{\xi}(x)d\nu(\xi) = e^{-h_{vol}\xi(x)} d\nu(\xi)
\end{align*}
form a $\Gamma$-Patterson-Sullivan measure on $\partial \hat{X}$.
\end{proof}

\section{Proof of Theorem~\ref{thm:visib}}

A simply connected complete Riemannian manifold $(X,g)$ without conjugate points is called a (uniform) \emph{visibility manifold} if given $\epsilon >0$ there exists $R=R(\epsilon) >0$ such that for every $p,x,y \in X$ such that the (unique) geodesic $[x,y]$ is a distance greater than $R$ from $p$, then the geodesics $[p,x]$ and $[p,y]$ make an angle less than $\epsilon$ at $p$. In short, geodesics far from $p$ look small. The basic example is the universal cover of a compact negatively curved manifold. We begin by recalling a theorem of Ruggiero.

\begin{theorem}
\cite[Theorem 6.8]{ruggiero07}
Let $M$ be a compact Riemannian manifold without conjugate points and let $X$ be the universal Riemannian cover of $M$. Then $X$ is a visibility manifold if and only if $X$ is Gromov hyperbolic and geodesics diverge in $X$.
\end{theorem}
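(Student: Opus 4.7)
This is a biconditional, and the two implications require quite different techniques. The forward direction ($X$ is a visibility manifold $\Rightarrow$ $X$ is Gromov hyperbolic with diverging geodesics) follows from fairly direct angle-versus-distance arguments using the visibility axiom. The reverse direction is more delicate: I would use the hypothesis to identify the unit tangent sphere with the Gromov boundary, and then read off visibility from the standard relationship between the Gromov product and distance to geodesics. Compactness of $M$ enters only in that it guarantees $X$ is proper (Hopf--Rinow) so that the Gromov boundary is well behaved.

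\textbf{Forward direction: divergence.} Suppose $v_1,v_2\in S_pX$ with $\angle(v_1,v_2)=\theta>0$. If the geodesics $\gamma_{v_1}$ and $\gamma_{v_2}$ fail to diverge, there exist $t_n\to\infty$ and $C>0$ with $d(\gamma_{v_1}(t_n),\gamma_{v_2}(t_n))\le C$. By the triangle inequality every point of the geodesic $[\gamma_{v_1}(t_n),\gamma_{v_2}(t_n)]$ lies at distance at least $t_n-C$ from $p$. Applying the visibility axiom with $\epsilon=\theta/2$ gives $R$ such that once $t_n-C>R$, the angle at $p$ between the geodesics to $\gamma_{v_1}(t_n)$ and $\gamma_{v_2}(t_n)$ is less than $\theta/2$. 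But these geodesics are exactly the initial segments of $\gamma_{v_1}$ and $\gamma_{v_2}$, so the angle is $\theta$, a contradiction.

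\textbf{Forward direction: thin triangles.} Fix $\epsilon<\pi/3$ and let $R=R(\epsilon)$ from visibility. Given any geodesic triangle $xyz$ and a point $q\in[x,y]$, suppose $d(q,[x,z]\cup[y,z])>R$. Then the opposite side $[x,z]$ is more than $R$ from $q$, so by visibility $\angle_q(x,z)<\epsilon$; similarly $\angle_q(y,z)<\epsilon$. The spherical triangle inequality on $S_qX$ then gives $\angle_q(x,y)<2\epsilon<\pi$, but $q\in[x,y]$ forces $\angle_q(x,y)=\pi$, a contradiction. Hence every geodesic triangle in $X$ is $R$-thin, which for a proper geodesic space is equivalent to Gromov hyperbolicity.

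\textbf{Reverse direction.} Assume $X$ is $\delta$-hyperbolic and geodesics in $X$ diverge; $X$ is proper since $M$ is compact. Define
\begin{equation*}
\Psi : S_pX \longrightarrow \partial_\infty X, \qquad v \longmapsto [\gamma_v],
\end{equation*}
where $[\gamma_v]$ is the Gromov-boundary point represented by $\gamma_v(t_k)$ for any $t_k\to\infty$. Injectivity of $\Psi$ follows from divergence of geodesics combined with the fact that two rays represent the same boundary point only when their Gromov product tends to infinity. Continuity of $\Psi$ is a standard Ascoli--Arzel\`a-type argument, and surjectivity uses properness: for any $\xi\in\partial_\infty X$ and approximating sequence $z_n\to\xi$, the unit tangents $v_n\in S_pX$ to $[p,z_n]$ subconverge to some $v$, and the estimate
\begin{equation*}
(\gamma_v(t_k)\mid z_n)_p \;\geq\; t_k-\tfrac{1}{2}d(\gamma_v(t_k),\gamma_{v_n}(t_k))
\end{equation*}
shows $[\gamma_v]=\xi$. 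Hence $\Psi$ is a homeomorphism from the compact set $S_pX$ onto $\partial_\infty X$. Now suppose visibility fails: there exist $\epsilon>0$ and sequences $x_n,y_n\in X$ with $\angle_p(x_n,y_n)\ge\epsilon$ but $d(p,[x_n,y_n])\to\infty$. The standard hyperbolicity estimate $(x_n\mid y_n)_p\ge d(p,[x_n,y_n])-4\delta$ forces $(x_n\mid y_n)_p\to\infty$, so $x_n$ and $y_n$ define the same point $\xi\in\partial_\infty X$. Letting $v_n^x,v_n^y\in S_pX$ be the initial tangents of $[p,x_n],[p,y_n]$, compactness gives subsequential limits $v^x,v^y$ with $\angle(v^x,v^y)\ge\epsilon$, so $v^x\ne v^y$; but $x_n\to\Psi(v^x)$ and $y_n\to\Psi(v^y)$ in $\partial_\infty X$ force $\Psi(v^x)=\Psi(v^y)=\xi$, contradicting injectivity of $\Psi$.

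\textbf{Main obstacle.} The ($\Rightarrow$) direction is essentially mechanical. The technical heart is the ($\Leftarrow$) direction, specifically establishing that $\Psi$ is a homeomorphism: injectivity requires more than pointwise divergence (it uses that non-asymptotic rays have Gromov product bounded along themselves, which one extracts from divergence plus hyperbolicity), and surjectivity requires the Ascoli--Arzel\`a extraction together with the Gromov-product estimate above. Once $\Psi$ is understood, the visibility conclusion is a short compactness argument.
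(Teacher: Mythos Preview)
The paper does not prove this statement; it is quoted verbatim from Ruggiero's monograph and used as a black box in Section~6. There is therefore no in-paper argument to compare against, and your sketch stands on its own.

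Your forward direction is clean and correct. In the reverse direction the overall strategy (identify $S_pX$ with the Gromov boundary via $\Psi$, then argue by contradiction using $(x\mid y)_p \approx d(p,[x,y])$) is the right one, but two points deserve attention.

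First, you run the contradiction at a single fixed basepoint $p$, which yields only pointwise visibility with a constant $R_p(\epsilon)$. The definition in the paper is \emph{uniform} visibility: $R$ must be independent of $p$. To close this, assume uniform visibility fails and take $p_n,x_n,y_n$ with $\angle_{p_n}(x_n,y_n)\ge\epsilon$ and $d(p_n,[x_n,y_n])\to\infty$; use cocompactness of the deck group to translate $p_n$ into a fixed fundamental domain, pass to a subsequence $p_n\to p$, and then run your compactness argument in $S_pX$ using continuous dependence of geodesics on both basepoint and direction. This is where compactness of $M$ is genuinely used.

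Second, and relatedly, your remark that ``compactness of $M$ enters only in that it guarantees $X$ is proper'' is not quite right: any complete Riemannian manifold is already proper by Hopf--Rinow, so properness comes for free from completeness. The real role of compactness of $M$ is the cocompact isometric action needed for the uniformity above; without it one would only get visibility at each point separately, with no control on how $R(\epsilon,p)$ varies with $p$.
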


In a non-compact complete Riemannian manifold $X$, geodesics are said to diverge if given any $p \in X$ and a distinct pair $v,w \in S_pX$ then
\begin{align*}
\lim_{t \rightarrow+\infty} d(\gamma_v(t),\gamma_w(t)) = +\infty.
\end{align*}

The purpose of this section is to prove Theorem~\ref{thm:visib} which we recall:

\begin{theorem} 
Suppose $M$ is a compact Riemannian manifold without conjugate points. Let $X$ be the universal Riemannian cover of $M$ with deck transformations $\Gamma=\pi_1(M) \subset \text{Isom}(X)$. If $(X,g)$ is a visibility manifold, then the following are equivalent
\begin{enumerate}
\item $X$ is a rank one symmetric space of noncompact type, 
\item $X$ is asymptotically harmonic,
\item each Busemann function $b_v$ is $C^2$ and $\Delta b_v \equiv h_{vol}$,
\item there exists a function $f:X \rightarrow \RR$ that is 1-Lipschitz and has $\Delta f \geq h_{vol}$ (in the sense of distributions),
\item $4\lambda_{min} = h_{vol}^2$,
\item there exists a $\Gamma$-Patterson-Sullivan measure $\{ \nu_x : x \in X\}$ such that the measure
\begin{align*}
d\wt{m} = dx \times d\nu_x(\xi)
\end{align*}
on $X \times \partial \hat{X}$ descends to a harmonic measure on the laminated space $X_M$.
\end{enumerate}
\end{theorem}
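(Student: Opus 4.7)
The plan is to prove the cyclic chain $(1) \Rightarrow (2) \Rightarrow (3) \Rightarrow (4) \Rightarrow (5) \Rightarrow (6) \Rightarrow (2)$ together with $(2) \Rightarrow (1)$. Several implications are immediate from results already in the excerpt. $(1) \Rightarrow (2)$ is standard, since rank one symmetric spaces of noncompact type are harmonic, hence asymptotically harmonic. $(2) \Rightarrow (3)$ combines the definition of asymptotically harmonic with Corollary~\ref{cor:asym_constant}, which identifies the universal Laplacian constant with $h_{vol}$. $(3) \Rightarrow (4)$ is trivial via $f = b_v$ for any $v \in SX$, since Busemann functions are $1$-Lipschitz. $(4) \Rightarrow (5)$ is Grigor'yan's Theorem 11.17 combined with the inequality~\ref{eq:inq_vol_eigen}, and $(5) \Rightarrow (6)$ is Theorem~\ref{thm:param_eq}. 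Finally $(2) \Rightarrow (1)$ uses Ruggiero's theorem to see that $X$ is Gromov hyperbolic and then invokes Theorem~\ref{thm:main1}(3); since $\RR^n$ is not Gromov hyperbolic for $n \geq 2$, the flat case is excluded and $M$ must be a rank one locally symmetric space of noncompact type.

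The substantive step is $(6) \Rightarrow (2)$, where the visibility hypothesis enters. From (6) together with (the proof of) Theorem~\ref{thm:param_eq}, there is a Patterson-Sullivan measure $\{\nu_x\}$ on $\partial\hat X$ with the property that for $\nu_o$-almost every $\xi \in \partial\hat X$, $\xi$ is smooth on $X$ and $\Delta \xi \equiv h_{vol}$. By Theorem~\ref{thm:buse_bd_basic}(4), $\norm{\nabla\xi} \equiv 1$, so this is equivalent to saying that $e^{-h_{vol}\xi}$ is a positive harmonic function on $X$. To upgrade ``$\nu_o$-a.e. $\xi$'' to ``every $\xi$,'' I will establish three things: (a) the map $v \in S_oX \mapsto b_v \in \partial\hat X$ is a homeomorphism; (b) $\nu_o$ has full support on $\partial \hat X$; and (c) the set of $\xi = b_v$ with $\Delta b_v \equiv h_{vol}$ is closed under locally uniform limits.

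For (a), visibility together with completeness of $X$ ensures that every $\xi \in \partial\hat X$ is of the form $b_v$ for a unique $v \in S_oX$, and the map $v \mapsto b_v$ is continuous in $C^0$ on $SX$ (this follows from the visibility axiom, divergence of geodesics, and Dini's theorem applied to the monotone approximation $b_v^t \to b_v$); Proposition~\ref{prop:busemann_fcns} then provides the homeomorphism. For (b), since $M$ is compact and $X$ is Gromov hyperbolic, the $\Gamma$ action on $\partial\hat X$ is minimal, and since $g_*\nu_o = \nu_{go}$ lies in the same measure class as $\nu_o$ for every $g \in \Gamma$, an open null set for $\nu_o$ would, by minimality and countability of $\Gamma$, force $\nu_o(\partial\hat X) = 0$, a contradiction. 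For (c), given any $v \in S_oX$, choose $v_n \to v$ with $\Delta b_{v_n} \equiv h_{vol}$ (possible by (a), (b) and density); then $b_{v_n} \to b_v$ locally uniformly by (a), so the positive harmonic functions $e^{-h_{vol} b_{v_n}}$ converge locally uniformly to $e^{-h_{vol} b_v}$, which by Harnack's principle is itself positive harmonic. Elliptic regularity then forces $b_v$ smooth with $\Delta b_v \equiv h_{vol}$, completing $(6) \Rightarrow (2)$.

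The main obstacle is step (b), verifying that $\Gamma$ acts minimally on the Busemann boundary. For CAT$(-1)$ spaces or Gromov hyperbolic spaces with cocompact action, minimality on the Gromov/visual boundary is classical; transferring this to the Busemann compactification $\partial\hat X$ requires an explicit identification between the two boundaries. For visibility manifolds this should follow from (a) together with the standard identification of $S_oX$ with the visual boundary, but the details of making this identification $\Gamma$-equivariant and compatible with the Patterson-Sullivan density require some care.
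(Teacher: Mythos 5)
Your overall decomposition mirrors the paper's: the implications $(1)\Leftrightarrow(2)\Leftrightarrow(3)\Rightarrow(4)\Rightarrow(5)\Rightarrow(6)$ are handled exactly as the paper does (by Theorem~\ref{thm:main1}, Proposition~\ref{prop:asym_constant}, Grigor\'yan, and Theorem~\ref{thm:param_eq}), and the substantive content is correctly located in $(6)\Rightarrow(2)$, which you resolve by a density-plus-closedness argument that is essentially the paper's Proposition~\ref{prop:char}. Your step (c) (Harnack convergence of $e^{-h_{vol}\xi_n}$ plus elliptic regularity and $\norm{\nabla\xi}\equiv 1$ from Theorem~\ref{thm:buse_bd_basic}(4)) is a correct fleshing-out of the paper's assertion that $\{f \in C^2(X): f(o)=c,\ \norm{\nabla f}\equiv 1,\ \Delta f \equiv h_{vol}\}$ is closed in $C(X)$.

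The genuine gap is in your step (a). You invoke Proposition~\ref{prop:busemann_fcns} to obtain a homeomorphism $S_oX \to \partial\hat X$, but that proposition \emph{requires} continuity of $v \mapsto b_v$ as a hypothesis, and your proposed derivation of that continuity via Dini is circular: Dini upgrades monotone pointwise convergence of $b_v^t \to b_v$ to locally uniform convergence only once one already knows the limit is continuous jointly in $(x,v)$, which is exactly what you are trying to prove. Continuity of $v\mapsto b_v$ is emphatically \emph{not} automatic for manifolds without conjugate points (the paper makes this point in Section~2), and I do not see a visibility-only argument for it. The paper avoids this entirely: it uses Gromov's boundary comparison (Theorem~\ref{thm:GHbusemann_structure}) together with divergence of geodesics to prove \emph{directly} that $\partial\hat X = \{b_v : v \in S_oX\}$ (Proposition~\ref{prop:buse_bd_visib}), with no continuity claim about $v\mapsto b_v$, and then runs the density argument intrinsically in $\partial\hat X$ (topologized by locally uniform convergence), so the $S_oX$-parameterization is never needed. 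You should do the same: replace your ``choose $v_n\to v$'' step with ``choose $\xi_n\to\xi$ in $\partial\hat X$'' directly.

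Your step (b) is the other place where the $\partial\hat X\leftrightarrow X(\infty)$ identification enters, and you correctly flag this. Once that identification is supplied (Theorem~\ref{thm:GHbusemann_structure} plus Proposition~\ref{prop:buse_bd_visib}), your argument via minimality of the $\Gamma$-action is a reasonable substitute for the paper's route: the paper instead observes that $\Psi_*\nu_x$ is a $\Gamma$-quasi-conformal density on $X(\infty)$ and cites Coornaert's Corollary 5.2 to get that its support equals the limit set, which is all of $X(\infty)$ by cocompactness. Both derive full support from the same cocompactness input; yours is no cheaper because minimality on the Gromov boundary itself rests on cocompact hyperbolicity. So the real missing ingredient in both (a) and (b) is the Gromov boundary comparison theorem, and until it is supplied your step (a) as written does not go through.
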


\begin{remark}
Theorem~\ref{thm:main1} shows that (1) and (2) are equivalent. Proposition~\ref{prop:asym_constant} shows that (2) and (3) are equivalent. Clearly (3) implies (4) as $\norm{\nabla b_v}=1$. The implication (4) implies (5) is due to Grigor\'yan~\cite[Theorem 11.17]{grig09} whose proof we provide below. Theorem~\ref{thm:param_eq} used results of Ledrappier~\cite{Led10} to show that (5) implies (6). It remains to prove that (6) implies (1).
\end{remark}

We begin by recalling Grigor\'yan's argument that (4) implies (5).

\begin{proposition}\cite[Theorem 11.17]{grig09}
Let $M$ be a compact manifold without conjugate points and let $X$ be the universal Riemannian cover of $M$.  Suppose that there exists a function $f:X \rightarrow \RR$ that is 1-Lipschitz and has $\Delta f \geq h_{vol}$ (in the sense of distributions), then $4\lambda_{min}=h_{vol}^2$.
\end{proposition}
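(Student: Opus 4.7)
The plan is to exhibit an explicit positive supersolution of the shifted Laplace equation built from $f$, and use it to improve the variational inequality for $\lambda_{min}$. Concretely, define
\begin{align*}
u = e^{-h_{vol}\, f/2}.
\end{align*}
Since $f$ is 1-Lipschitz, so is $f/2$ scaled, and $u$ is a positive Lipschitz function on $X$ with $\nabla u = -\tfrac{h_{vol}}{2}\, u\, \nabla f$ a.e.\ by Rademacher's theorem. The goal is to show that for every $\phi \in C_c^\infty(X)$,
\begin{align*}
\int_X \norm{\nabla \phi}^2\, dx \ \geq\ \frac{h_{vol}^2}{4} \int_X \phi^2\, dx,
\end{align*}
which, combined with the elementary inequality $4\lambda_{min} \leq h_{vol}^2$ from (\ref{eq:inq_vol_eigen}), gives the result.

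The substitution to use is $\phi = u\psi$, where $\psi = \phi/u$ is a compactly supported Lipschitz function (well-defined since $u > 0$). Expanding gives
\begin{align*}
\norm{\nabla \phi}^2 = \psi^2 \norm{\nabla u}^2 + 2 u\psi \ip{\nabla u, \nabla \psi} + u^2 \norm{\nabla \psi}^2,
\end{align*}
and substituting $\nabla u = -\tfrac{h_{vol}}{2}\, u \nabla f$ yields $\norm{\nabla u}^2 = \tfrac{h_{vol}^2}{4} u^2 \norm{\nabla f}^2$ and
\begin{align*}
2u\psi \ip{\nabla u, \nabla \psi} = -h_{vol}\, u^2 \psi \ip{\nabla f, \nabla \psi}.
\end{align*}
The identity $u^2 \psi \nabla \psi = \tfrac{1}{2} \nabla(\phi^2) + \tfrac{h_{vol}}{2}\, \phi^2 \nabla f$ (from $\nabla u = -\tfrac{h_{vol}}{2}u\nabla f$) turns the cross term into $-\tfrac{h_{vol}}{2}\ip{\nabla f, \nabla \phi^2} - \tfrac{h_{vol}^2}{2}\phi^2\norm{\nabla f}^2$. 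Integrating and discarding the nonnegative term $\int u^2 \norm{\nabla \psi}^2$, one obtains
\begin{align*}
\int \norm{\nabla \phi}^2\, dx \ \geq\ -\frac{h_{vol}^2}{4}\int \phi^2 \norm{\nabla f}^2\, dx \ -\ \frac{h_{vol}}{2}\int \ip{\nabla f, \nabla \phi^2}\, dx.
\end{align*}

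Now apply the two hypotheses. Since $f$ is 1-Lipschitz, $\norm{\nabla f} \leq 1$ a.e., so the first term is bounded below by $-\tfrac{h_{vol}^2}{4}\int \phi^2\, dx$. For the second, since $\phi^2 \in C_c^\infty(X)$ is nonnegative and $\Delta f \geq h_{vol}$ in the distributional sense, integration by parts (valid since $f$ is Lipschitz) gives
\begin{align*}
-\int \ip{\nabla f, \nabla \phi^2}\, dx = (\Delta f)(\phi^2) \geq h_{vol} \int \phi^2\, dx.
\end{align*}
Therefore
\begin{align*}
\int \norm{\nabla \phi}^2\, dx \ \geq\ -\frac{h_{vol}^2}{4}\int \phi^2\, dx + \frac{h_{vol}^2}{2}\int \phi^2\, dx = \frac{h_{vol}^2}{4}\int \phi^2\, dx,
\end{align*}
which is the desired bound $4\lambda_{min} \geq h_{vol}^2$.

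The only delicate point is the chain-rule computation, which needs to be justified since $f$ is merely Lipschitz and $\Delta f \geq h_{vol}$ only in the distributional sense. This is not really an obstacle: Rademacher's theorem gives pointwise gradients a.e., and the manipulations above use only (i) the a.e.\ chain rule for compositions of smooth functions with Lipschitz maps and (ii) the standard fact that for Lipschitz $f$ and $\psi \in C_c^\infty$ one has $(\Delta f)(\psi) = -\int \ip{\nabla f, \nabla \psi}\, dx$. Both are entirely routine, so the argument goes through with no compactness of $M$ beyond what is implicit in the definition of $h_{vol}$.
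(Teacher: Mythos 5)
Your argument is correct, and it reaches the inequality $4\lambda_{min}\geq h_{vol}^2$ by a route different from the one reproduced in the paper. The paper's proof (following Grigor'yan) simply tests the distributional inequality $\Delta f\geq h_{vol}$ against $\phi^2$, integrates by parts, and applies the Cauchy--Schwarz inequality in integral form, $\int \norm{\nabla\phi}\,\abs{\phi}\,dx\leq \left(\int\norm{\nabla\phi}^2dx\right)^{1/2}\left(\int\phi^2dx\right)^{1/2}$; moreover the paper only writes this out ``for the reader's convenience'' in the case $f\in C^2$. You instead perform the ground-state substitution $\phi=u\psi$ with $u=e^{-h_{vol}f/2}$, i.e.\ you use that $u$ is a positive (weak) supersolution of $-\Delta u\geq \tfrac{h_{vol}^2}{4}u$ and invoke what is in effect the Barta/Allegretto--Piepenbrink principle; discarding the term $\int u^2\norm{\nabla\psi}^2dx$ is the same as the pointwise completed square $0\leq\norm{\nabla\phi+\tfrac{h_{vol}}{2}\phi\nabla f}^2$, so your estimate replaces the integral Cauchy--Schwarz step by a pointwise one. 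What your version buys is that it works verbatim in the stated generality ($f$ merely Lipschitz, $\Delta f\geq h_{vol}$ only distributionally): the chain rule $\nabla u=-\tfrac{h_{vol}}{2}u\nabla f$ holds a.e., $\psi=\phi/u$ is compactly supported Lipschitz because $u$ is locally Lipschitz and bounded away from $0$ on $\supp\phi$, and the pairing $(\Delta f)(\phi^2)=-\int\ip{\nabla f,\nabla\phi^2}dx$ with the nonnegative test function $\phi^2$ is legitimate; the paper's smooth-case computation avoids these points but defers the general case to the reference. One cosmetic remark: $u$ need not be globally Lipschitz when $f$ is unbounded below, only locally Lipschitz, but that is all your argument uses. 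Both proofs, like the paper's, conclude by combining the bound with the elementary reverse inequality $4\lambda_{min}\leq h_{vol}^2$ of Equation~\ref{eq:inq_vol_eigen}.
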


\begin{proof}
For the reader's convenience we provide Grigor\'yan proof for the case when $f$ is $C^2$. Then for $\phi \in C_K^{\infty}(X)$,
\begin{align*}
h_{vol} \int_{X} 
&\phi^2 dx \leq \int_X \Delta f \phi^2 dx = -2\int_X \ip{\nabla f,\nabla \phi}\phi dx \\
&\leq 2 \int_X \norm{\nabla \phi} \phi dx \leq 
2 \left(\int_X \norm{\nabla \phi}^2 dx\right)^{1/2} \left(\int_X \phi^2 dx\right)^{1/2}.
\end{align*}
So we have for all $\phi \in C_K^{\infty}(X)$ that
\begin{align*}
h_{vol}^2 \leq 4\frac{\int_X \norm{\nabla \phi}^2 dx}{\int_X \phi^2 dx}.
\end{align*}
This implies that $h_{vol}^2 \leq 4\lambda_{min}$ and the reverse inequality is always true by the discussion proceeding Equation~\ref{eq:inq_vol_eigen}. 
\end{proof}

We now prove that (6) implies that $(M,g)$ is asymptotically harmonic.

\begin{proposition}
\label{prop:char}
Suppose $M$ is a compact Riemannian manifold without conjugate points. Let $X$ be the universal Riemannian cover of $M$ with deck transformations $\Gamma=\pi_1(M) \subset \text{Isom}(X)$. If $(X,g)$ is a visibility manifold and there exists a $\Gamma$-Patterson-Sullivan measure $\{ \nu_x : x \in X\}$ on $\partial \hat{X}$ such that the measure
\begin{align*}
d\wt{m} = dx \times d\nu_x(\xi)
\end{align*}
descends to a harmonic measure on the laminated space $X_M$, then $X$ is asymptotically harmonic.
\end{proposition}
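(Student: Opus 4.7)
The plan is to reverse the direction of Theorem~\ref{thm:param_eq}: starting from the assumed harmonic measure decomposition of the Patterson--Sullivan measure, I would recover the pointwise PDE $\Delta\xi\equiv h_{vol}$ for every $\xi\in\partial\hat{X}$, and thereby the asymptotic harmonicity of $X$. The first step is to invoke Garnett's disintegration theorem quoted at the beginning of Section 5: the assumed harmonicity of $\wt{m}$ on $X_M$ forces the leaf density to be a positive harmonic function of $x$ for $\nu_o$-almost every $\xi$. In our setting this density is exactly $k_\xi(x):=e^{-h_{vol}\xi(x)}$, so for a full-measure set $\mathcal{A}\subset\partial\hat{X}$ the function $k_\xi$ is positive and harmonic, hence smooth, and therefore $\xi=-h_{vol}^{-1}\log k_\xi$ is smooth.

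Since $M$ is compact (so $X$ has bounded sectional curvature) and has no conjugate points, Theorem~\ref{thm:buse_bd_basic}(4) guarantees $\norm{\nabla\xi}\equiv 1$ for every $\xi\in\partial\hat{X}$. A direct computation for $\xi\in\mathcal{A}$ yields
\begin{align*}
0 = \Delta k_\xi = h_{vol}\,k_\xi\bigl(h_{vol}\norm{\nabla\xi}^2-\Delta\xi\bigr) = h_{vol}\,k_\xi\bigl(h_{vol}-\Delta\xi\bigr),
\end{align*}
so $\Delta\xi\equiv h_{vol}$ for every $\xi\in\mathcal{A}$.

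Next I would propagate the identity $\Delta\xi\equiv h_{vol}$ from $\mathcal{A}$ to all of $\partial\hat{X}$. The $\Gamma$-equivariance of $\{\nu_x\}$ combined with $\nu_x\sim\nu_o$ shows that $\supp(\nu_o)$ is a closed $\Gamma$-invariant subset of $\partial\hat{X}$; since $X$ is a visibility manifold and $\Gamma$ acts cocompactly, a standard minimality argument in Gromov hyperbolic geometry gives $\supp(\nu_o)=\partial\hat{X}$, so $\mathcal{A}$ is dense. Given $\xi\in\partial\hat{X}$, pick $\xi_n\in\mathcal{A}$ with $\xi_n\to\xi$ locally uniformly on $X$. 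The Hessian estimate $\norm{\nabla^2 b_y(x)}\leq\kappa$ for $d(x,y)\geq 1$ used in the proof of Theorem~\ref{thm:buse_bd_basic}(4) passes to $\xi_n$, so after extraction Arzel\`{a}--Ascoli gives $\xi_n\to\xi$ in $C^1_{loc}$. Passing to the distributional limit in $\Delta\xi_n=h_{vol}$ then yields $\Delta\xi=h_{vol}$ in the sense of distributions, and elliptic regularity upgrades $\xi$ to a smooth classical solution.

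Finally, for an arbitrary $v\in SX$ based at $p\in X$, a visibility argument lets one write $b_v=\xi_v+b_v(o)$ for some $\xi_v\in\partial\hat{X}$: the functions $b_{y_n}(x):=d(x,y_n)-d(y_n,o)$ with $y_n=\gamma_v(t_n)$, $t_n\to\infty$, converge in $C(X)$ to an element $\xi_v\in\partial\hat{X}$, and the correction $d(o,y_n)-t_n$ is monotone non-increasing and bounded below, hence convergent to $b_v(o)$. Consequently $\Delta b_v=\Delta\xi_v\equiv h_{vol}$ and each $b_v$ is smooth, which is precisely the asymptotic harmonicity of $X$ with constant $\alpha=h_{vol}$. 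The main technical obstacle is the density/extension step: verifying $\supp(\nu_o)=\partial\hat{X}$ in the Busemann (rather than Gromov) boundary and promoting locally uniform convergence to the $C^1_{loc}$ convergence needed to pass the PDE through the limit.
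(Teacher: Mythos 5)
Your proposal follows essentially the same route as the paper: identify the leaf densities of the harmonic measure with $e^{-h_{vol}\xi}$ via Garnett's theorem, use $\norm{\nabla\xi}\equiv 1$ from Theorem~\ref{thm:buse_bd_basic} to get $\Delta\xi\equiv h_{vol}$ on a full-measure set, spread this to all of $\partial\hat{X}$ by full support plus a limiting/closedness argument, and finally observe that $b_v-b_v(o)\in\partial\hat{X}$.

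The one step you flag but do not carry out --- full support of $\nu_o$ in the Busemann boundary --- is exactly where the paper does real work: Proposition~\ref{prop:buse_bd_visib} (using visibility, i.e.\ divergence of geodesics, together with Theorem~\ref{thm:GHbusemann_structure}) shows $\partial\hat{X}=\{b_v : v\in S_oX\}$ and is $\Gamma$-equivariantly homeomorphic to $X(\infty)$, after which the pushforward of $\nu_x$ is a $\Gamma$-quasi-conformal density and Coornaert's theorem identifies its support with the limit set, which is all of $X(\infty)$ by cocompactness; your appeal to ``standard minimality'' needs this identification (and would also need care for elementary groups), so you should cite or reprove these two ingredients rather than leave them as an obstacle. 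Your elliptic-regularity treatment of the limiting step is if anything more detailed than the paper's terse claim that the relevant set of functions is closed in $C(X)$, and your final identification of $b_v-b_v(o)$ as a boundary point matches the paper's argument; note also that both you and the paper implicitly use $h_{vol}>0$ when writing $\xi=-h_{vol}^{-1}\log k_\xi$.
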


We will need some facts Busemann functions in a Gromov hyperbolic space. Recall that a Gromov hyperbolic space has an \emph{ideal boundary} $X(\infty)$ defined to be equivalent classes of geodesics where two geodesics $\gamma$ and $\sigma$ are equivalent if 
\begin{align*}
\sup_{t \geq 0 } d(\gamma(t),\sigma(t)) < + \infty.
\end{align*}
Let $o \in X$ be the point in the definition of the Busemann boundary and let $[\sigma]$ denote the equivalence class of a geodesic $\sigma$. By using completeness we see that any equivalence class of geodesics can be represented by a geodesic $\gamma$ with $\gamma(0)=o$. If $\gamma^\prime(0)=v\in S_oX$ we will occasionally denote the Busemann function $b_v$ by $b_{\gamma}$:
\begin{align*}
b_{\gamma}(x)=b_v(x) = \lim_{t \rightarrow \infty} d(x,\gamma(t))-t.
\end{align*}
For Gromov hyperbolic spaces there is a nice relationship between the Busemann compactification and the ideal boundary.

\begin{theorem}\label{thm:GHbusemann_structure}\cite[Section 7.5.D]{Gromov87}
Let $X$ be a Gromov hyperbolic Riemannian manifold. Consider the quotient space $[\partial \hat{X}] = \partial \hat{X} / \sim$ where $\xi \sim \eta$ if and only if $\sup_{x \in X} | \xi(x)-\eta(x)| < +  \infty$. Then the map $\psi : X(\infty) \rightarrow [\partial \hat{X}]$ given by $\psi([\gamma]) = [b_{\gamma}]$ is a homeomorphism and the identity map $id : X \rightarrow X$ extends continuously to a map
\begin{align*}
X \sqcup \partial \hat{X} \rightarrow X \sqcup X(\infty)
\end{align*}
where the boundary is mapped by $\Psi=\psi^{-1} \pi$. Moreover, there exists $\kappa=\kappa(X)$ such that for all $\xi \sim \eta \in \partial \hat{X}$,
\begin{align*}
\sup_{x \in X} | \xi(x)-\eta(x)| < \kappa.
\end{align*}
\end{theorem}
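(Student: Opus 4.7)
The strategy is the standard one in Gromov hyperbolic geometry: reduce every assertion to the fellow-traveler estimate for asymptotic geodesic rays in a $\delta$-hyperbolic space. Recall that in a $\delta$-hyperbolic Riemannian manifold, two unit-speed geodesic rays $\gamma,\sigma$ with $\gamma(0)=\sigma(0)=o$ which represent the same class in $X(\infty)$ must satisfy $d(\gamma(t),\sigma(t))\leq C_0(\delta)$ for all $t\geq 0$, where $C_0(\delta)$ depends only on the hyperbolicity constant. This is the one quantitative input the proof needs; everything else is formal.

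Given this, I would first establish the uniform bound (the last clause of the theorem). If $\gamma,\sigma$ are equivalent rays based at $o$, then for any $x\in X$
\begin{align*}
|b_\gamma(x)-b_\sigma(x)|=\lim_{t\to\infty}\bigl|d(x,\gamma(t))-d(x,\sigma(t))\bigr|\leq\sup_{t\geq 0}d(\gamma(t),\sigma(t))\leq C_0(\delta),
\end{align*}
so $\sup_x|b_\gamma-b_\sigma|\leq C_0(\delta)$. Together with the surjectivity argument in the next step, this yields a uniform $\kappa=\kappa(X)$ controlling every equivalence class in $\partial\hat X$. Injectivity of $\psi$ is a short computation: if $[\gamma]\neq[\sigma]$ in $X(\infty)$ then evaluating at $x=\gamma(t)$ gives $b_\gamma(\gamma(t))=-t$ while $b_\sigma(\gamma(t))=t-2(\gamma(t)\mid\sigma(\infty))_o+O(\delta)$, and the Gromov product stays bounded because $\gamma(\infty)\neq\sigma(\infty)$, so $b_\sigma(\gamma(t))-b_\gamma(\gamma(t))\to\infty$. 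Hence $b_\gamma\not\sim b_\sigma$, and $\psi$ is well-defined and injective.

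For surjectivity of $\psi$, I would take $\xi\in\partial\hat X$, write it as a locally uniform limit $\xi=\lim b_{y_n}$ with $d(o,y_n)\to\infty$ (which is forced, since on compact sets $b_y\to\xi\in\partial\hat X$ cannot occur if $y$ stays in a compact region), pass to a subsequence so that $y_n$ converges in the Gromov compactification to some $\zeta\in X(\infty)$, and let $\gamma$ be the geodesic from $o$ to $\zeta$. A thin-triangles argument then shows that for $y$ close enough to $\zeta$ in $X(\infty)$ one has $|b_y(x)-b_\gamma(x)|\leq C_1(\delta)$ uniformly in $x\in X$ (the geodesic $[o,y]$ shadows $\gamma$ on long initial segments). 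Passing to the limit in $n$ gives $|\xi(x)-b_\gamma(x)|\leq C_1(\delta)$, i.e.\ $\xi\sim b_\gamma$, so $\psi([\gamma])=[\xi]$. Continuity of $\psi$ is easy: if $[\gamma_n]\to[\gamma]$ in $X(\infty)$, then the based representatives converge pointwise (hence uniformly on compact sets), and a Dini-type argument analogous to the one in Proposition~\ref{prop:busemann_fcns} shows $b_{\gamma_n}\to b_\gamma$ in $\partial\hat X$, so $[b_{\gamma_n}]\to[b_\gamma]$. Because $X(\infty)$ and $[\partial\hat X]$ are compact Hausdorff (the latter because the equivalence relation is closed, as witnessed by the uniform bound $\kappa$), the continuous bijection $\psi$ is a homeomorphism.

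Finally, to extend the identity $X\to X$ to the stated map $X\sqcup\partial\hat X\to X\sqcup X(\infty)$, I define the extension to be $\Psi=\psi^{-1}\circ\pi$ on $\partial\hat X$. The only nontrivial continuity check is at boundary points: if $z_n\in X$ with $d(o,z_n)\to\infty$ and $b_{z_n}\to\xi\in\partial\hat X$, I must show $z_n\to\Psi(\xi)$ in $X\sqcup X(\infty)$. This is precisely the convergence established in the surjectivity step: any subsequential limit of $z_n$ in $X(\infty)$ must equal $\psi^{-1}([\xi])$, and compactness of $X(\infty)$ then forces the full sequence to converge. \textbf{The main obstacle} is the fellow-traveler estimate for shadowing: showing that for $y$ far enough toward $\zeta\in X(\infty)$, the Busemann functions $b_y$ and $b_\gamma$ agree up to an additive constant depending only on $\delta$. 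All other steps are formal consequences of this estimate combined with compactness and the definitions.
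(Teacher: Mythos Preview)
The paper does not prove this theorem; it is quoted from Gromov \cite[Section 7.5.D]{Gromov87} and used as a black box. So there is no ``paper's own proof'' to compare against. Your outline is the standard argument and is essentially correct: everything does reduce to the fellow-traveler/shadowing estimate in $\delta$-hyperbolic spaces, and your derivation of the uniform constant $\kappa$ from surjectivity plus the ray estimate is the right mechanism.

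One small point to tighten. In the continuity step for $\psi$ you invoke a ``Dini-type argument analogous to the one in Proposition~\ref{prop:busemann_fcns}''. That proposition \emph{assumes} continuity of $v\mapsto b_v$ as input, which is exactly what can fail in a bare Gromov hyperbolic manifold (and is why the Busemann boundary may be strictly larger than $X(\infty)$). The correct argument at this point does not need $b_{\gamma_n}\to b_\gamma$ in $\partial\hat X$; it only needs $[b_{\gamma_n}]\to[b_\gamma]$ in the quotient. Use compactness of $\partial\hat X$: any subsequential limit $\xi$ of $b_{\gamma_n}$ arises as a limit of $b_{y_m}$ with $y_m\to[\gamma]$ in $X(\infty)$, so your own shadowing estimate gives $\sup_x|\xi-b_\gamma|\leq C_1(\delta)$, hence $[\xi]=[b_\gamma]$. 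That closes the gap without appealing to Dini.
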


Notice that we have choosen to normalize our Busemann functions such that $\xi(o)=0$. Otherwise the difference in the above theorem would have to be replaced with
\begin{align*}
\sup_{x,y \in X} \left| \Big(\xi(x)-\xi(y)\Big)-\Big(\eta(x)-\eta(y)\Big)\right| < \kappa.
\end{align*}

\begin{lemma}
Let $X$ be a Gromov hyperbolic Riemannian manifold. With the notation of Theorem~\ref{thm:GHbusemann_structure}: if $v \in SX$ and $\xi \in \partial \hat{X}$ and $[\gamma_v] \neq [\Psi(\xi)]$ then $\xi(\gamma_v(t)) \rightarrow +\infty$ as $t \rightarrow +\infty$. 
\end{lemma}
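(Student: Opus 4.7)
The plan is to transfer the question from the abstract boundary function $\xi$ to the Busemann function of an honest geodesic ray, and then to exploit $\delta$-hyperbolicity via the Gromov product to force linear growth along $\gamma_v$. First I would apply Theorem~\ref{thm:GHbusemann_structure} to pick a unit-speed geodesic ray $\eta\colon[0,\infty)\to X$ with $[\eta]=\Psi(\xi)\in X(\infty)$. Then $\xi$ and $b_\eta$ lie in the same class of $[\partial\hat{X}]$, so $\sup_{x\in X}|\xi(x)-b_\eta(x)|<\kappa$ for the uniform constant $\kappa=\kappa(X)$ given by that theorem. Since this bound is independent of $x$, it is enough to prove $b_\eta(\gamma_v(t))\to+\infty$ as $t\to+\infty$.

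Next I would fix $p=\gamma_v(0)$ as basepoint and use the Gromov product $(a|b)_p = \tfrac{1}{2}(d(p,a)+d(p,b)-d(a,b))$. Since the ideal points $[\gamma_v]$ and $[\eta]$ are distinct, the standard fact that two sequences determine the same Gromov boundary point iff their Gromov products diverge to $+\infty$ yields a constant $C$ with $(\gamma_v(t)|\eta(s))_p \leq C$ for all $t,s\geq 0$. Using $d(p,\gamma_v(t))=t$, this rearranges to
\[
d(\gamma_v(t),\eta(s)) - s \;\geq\; t + (d(p,\eta(s))-s) - 2C.
\]
The quantity $d(p,\eta(s))-s$ decreases to a finite limit $\tilde b_\eta(p)$ as $s\to\infty$ (triangle inequality), so sending $s\to\infty$ gives $\tilde b_\eta(\gamma_v(t))\geq t+\tilde b_\eta(p)-2C$, where $\tilde b_\eta(x)=\lim_{s}d(x,\eta(s))-s$ is the unnormalized Busemann function. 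Subtracting the constant $\tilde b_\eta(o)$ to pass back to the normalized boundary point $b_\eta\in\partial\hat{X}$ then yields $b_\eta(\gamma_v(t))\geq t-C'$, and the lemma follows.

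The substantive step is the uniform bound $(\gamma_v(t)|\eta(s))_p\leq C$. Qualitatively, it is the statement that two geodesic rays landing at distinct points of the Gromov boundary must diverge linearly up to a bounded error. The short argument is by contradiction: if $(\gamma_v(t_n)|\eta(s_n))_p\to +\infty$ along some sequences $t_n,s_n\to+\infty$, then combining this with $(\gamma_v(t_n)|\gamma_v(t_n'))_p=\min(t_n,t_n')\to\infty$ (because $\gamma_v$ is a ray from $p$) and $(\eta(s_n)|\eta(s_n'))_p\to\infty$ (because $\eta$ is itself a geodesic ray, so $d(p,\eta(s))-s$ is bounded) via the $\delta$-hyperbolic four-point condition forces $(\gamma_v(t_n')|\eta(s_n'))_p\to\infty$ for all $t_n',s_n'\to\infty$, i.e.\ $[\gamma_v]=[\eta]$ in $X(\infty)$, contradicting the hypothesis. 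This is entirely standard, and I would either cite Bridson--Haefliger, Chapter III.H, or include the short $\delta$-hyperbolic computation sketched above rather than grind through it in place.
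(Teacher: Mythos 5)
Your proposal is correct and follows essentially the same route as the paper: both replace $\xi$ by the Busemann function of an actual geodesic ray asymptotic to $\Psi(\xi)$ (using the uniform bound $\kappa$ from Theorem~\ref{thm:GHbusemann_structure} to transfer the conclusion back to $\xi$), and both exploit the Gromov-hyperbolicity fact that rays to distinct ideal points diverge linearly. The only cosmetic difference is that you phrase this divergence via a uniform bound on the Gromov product $(\gamma_v(t)\mid\eta(s))_p$, whereas the paper states it directly as $2t - d(\gamma_u(t),\gamma_v(t)) \le C$; these are the same inequality.
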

\begin{proof}
Let $u \in S_oX$ be such that $\xi \sim b_{u}$. Then $[\gamma_u] = [\Psi(\xi)]$ and so $[\gamma_u] \neq [\gamma_v]$. In particular,
\begin{align*}
d(\gamma_u(t),\gamma_v(t)) \rightarrow +\infty \text{ as $t \rightarrow +\infty$}.
\end{align*}
As $(X,g)$ is Gromov hyperbolic, this means that 
\begin{align*}
2t - d(\gamma_u(t),\gamma_v(t)) \leq C=C(v,u) < + \infty
\end{align*}
for some $C>0$ and all $t \geq 0$. So
\begin{equation*}
b_{u}(\gamma_v(t)) = \lim_{T \rightarrow \infty} d(\gamma_u(T),\gamma_v(t))-T \geq t +\lim_{T \rightarrow \infty} d(\gamma_u(T),\gamma_v(T))-2T \geq t - C(v,u). \qedhere
\end{equation*}
\end{proof}

Using the above results we will show that the ideal boundary and Busemann compactification coincide for visibility manifolds.

\begin{proposition}
\label{prop:buse_bd_visib}
Let $X$ be a simply connected visibility manifold without conjugate points and with sectional curvature bounded from below. Then
\begin{align*}
\partial \hat{X} = \{ b_v : v \in S_oM\}.
\end{align*}
\end{proposition}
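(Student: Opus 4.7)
Given $\xi \in \partial \hat{X}$, Theorem~\ref{thm:buse_bd_basic}(4) gives that $\xi$ is $C^1$ with $\norm{\nabla \xi} \equiv 1$, so integral curves of $-\nabla \xi$ are unit-speed geodesics. The plan is to set $v := -\nabla \xi(o) \in S_oX$ and prove $\xi = b_v$; the reverse inclusion $\{b_v : v \in S_oX\} \subset \partial \hat{X}$ is immediate since $b_{\gamma_v(t)} \to b_v$ in $C(X)$ while $\gamma_v(t)$ leaves every compact subset of $X$. The integral curve of $-\nabla \xi$ through $o$ is the unique geodesic $\gamma_v$, so $\xi(\gamma_v(t)) = -t$; since $-t \not\to +\infty$, the lemma immediately preceding the proposition forces $[\gamma_v] = [\Psi(\xi)]$ in $X(\infty)$. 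On the other hand, $\Psi(b_v) = [\gamma_v]$ by the definition of $\Psi$ in Theorem~\ref{thm:GHbusemann_structure}, so $\Psi(b_v) = \Psi(\xi)$.

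The key step is to upgrade this to the identity $\nabla \xi \equiv \nabla b_v$ on all of $X$. Fix $x \in X$ and set $u := -\nabla \xi(x)$ and $u' := -\nabla b_v(x)$. The integral curves of $-\nabla \xi$ and $-\nabla b_v$ through $x$ are respectively the geodesics $\gamma_u$ and $\gamma_{u'}$, and along them $\xi$ and $b_v$ decrease linearly to $-\infty$. Applying the preceding lemma to the pairs $(\gamma_u, \xi)$ and $(\gamma_{u'}, b_v)$ yields
\begin{align*}
[\gamma_u] = [\Psi(\xi)] = [\gamma_v] = [\Psi(b_v)] = [\gamma_{u'}]
\end{align*}
in $X(\infty)$, so $\gamma_u$ and $\gamma_{u'}$ are two geodesic rays from $x$ representing the same ideal boundary point. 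The visibility hypothesis then forces $u = u'$: if $u \neq u'$, the angle $\theta := \angle(u,u')$ would be positive, and if $\sup_t d(\gamma_u(t),\gamma_{u'}(t))$ were finite, then for large $t$ the segment $[\gamma_u(t),\gamma_{u'}(t)]$ would lie arbitrarily far from $x$; the visibility angle estimate (with $p = x$) would then force the initial tangents of $[x,\gamma_u(t)]$ and $[x,\gamma_{u'}(t)]$ to coincide in the limit, but the no-conjugate-points hypothesis identifies these initial tangents with $u$ and $u'$, contradicting $\angle(u,u') = \theta > 0$.

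Consequently $\nabla \xi = \nabla b_v$ on the connected manifold $X$, so $\xi - b_v$ is constant, and the normalization $\xi(o) = 0 = b_v(o)$ yields $\xi = b_v$. The main obstacle I anticipate is the uniqueness of the geodesic ray from $x$ in a prescribed ideal boundary class; this is where the visibility axiom is used, and it requires verifying both that the putative bounded-distance segments $[\gamma_u(t),\gamma_{u'}(t)]$ recede from $x$ and that the no-conjugate-points hypothesis identifies the initial tangent of the minimizing segment with the velocity of the corresponding geodesic ray. The remaining ingredients are direct applications of the lemmas and theorems stated earlier in the excerpt.
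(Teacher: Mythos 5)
Your proof is correct and follows essentially the same route as the paper: use the $C^1$-regularity from the preliminary theorem, identify $\Psi(\xi)$ with a single ideal point via the preceding lemma, and then use uniqueness of rays (geodesic divergence / visibility) at every base point $x$ to force $\nabla\xi = \nabla b_v$. The only cosmetic differences are that the paper pulls the existence of $v$ with $\xi\sim b_v$ directly from Theorem~\ref{thm:GHbusemann_structure} rather than constructing $v=-\nabla\xi(o)$ and applying the lemma to both $\xi$ and $b_v$, and that the paper compresses your visibility-angle argument to the phrase ``geodesics diverge'' (which is equivalent here by Ruggiero's characterization).
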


\begin{proof}
As $X$ has no conjugate points and sectional curvature bounded from below, Theorem~\ref{thm:buse_bd_basic} implies that each $\xi \in \partial \hat{X}$ is $C^1$, $\norm{\nabla \xi}=1$, and the integral curves of $\xi$ are geodesics. By Theorem~\ref{thm:GHbusemann_structure}, there exists $v \in S_oM$ such that $\xi \sim b_v$. Now let $q \in X$ and suppose $\wt{v}=-\nabla b_v(q)$ and $w = -\nabla \xi(q)$. Then $b_v(\gamma_{\wt{v}}(t))=-t+b_v(q)$ and $\xi(\gamma_w(t))=-t+\xi(q)$, so by the Lemma above, we see that $[\gamma_{\wt{v}}] = [\gamma_v]=[\Psi(\xi)]=[\gamma_w]$. But $\wt{v},w \in S_qX$ and geodesics diverege in $(X,g)$, so we must have $\wt{v}=w$. As $q \in X$ was arbitrary, $\nabla b_v=\nabla \xi$ and hence that $b_v = \xi$.
\end{proof}

\begin{lemma}
Suppose $M$ is a compact Riemannian manifold without conjugate points. Let $X$ be the universal Riemannian cover of $M$ with deck transformations $\Gamma=\pi_1(M) \subset \text{Isom}(X)$. If $(X,g)$ is a visibility manifold and $\{ \nu_x : x \in X\}$ is a $\Gamma$-Patterson-Sullivan measure on $\partial \hat{X}$ then $\supp(\nu_x)=\partial \hat{X}$ for all $x \in X$.
\end{lemma}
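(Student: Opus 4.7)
The plan is to argue by contradiction using minimality of the $\Gamma$-action on $\partial \hat{X}$. Suppose there exists a nonempty open set $U \subseteq \partial \hat{X}$ with $\nu_o(U) = 0$. First I would spread this null set around by $\Gamma$: from $\nu_{go} = g_* \nu_o$ we get $\nu_{go}(gU) = \nu_o(U) = 0$, and the cocycle identity
\begin{align*}
\frac{d\nu_o}{d\nu_{go}}(\xi) = e^{-h_{vol}(\xi(o) - \xi(go))}
\end{align*}
supplies an everywhere positive Radon--Nikodym derivative, so $\nu_o$ and $\nu_{go}$ are mutually absolutely continuous and $\nu_o(gU) = 0$ for every $g \in \Gamma$. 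Since $\Gamma$ is countable, $\nu_o\bigl(\bigcup_{g \in \Gamma} gU\bigr) = 0$.

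The next step is to show $\bigcup_{g \in \Gamma} gU = \partial \hat{X}$, which contradicts $\nu_o(\partial \hat{X}) = 1$. Because $X$ is a visibility manifold with compact quotient, Ruggiero's theorem gives that $X$ is Gromov hyperbolic with diverging geodesics. Combining Proposition~\ref{prop:buse_bd_visib} with Theorem~\ref{thm:GHbusemann_structure} identifies $\partial \hat{X}$ $\Gamma$-equivariantly with the Gromov boundary $X(\infty)$ via $b_v \leftrightarrow [\gamma_v]$; in the visibility setting the equivalence classes in $[\partial \hat{X}]$ are singletons (this is precisely what the proof of Proposition~\ref{prop:buse_bd_visib} establishes). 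Thus it suffices to show that the $\Gamma$-action on $X(\infty)$ is minimal.

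The hard part will be establishing this minimality. Since $\Gamma$ acts cocompactly on the Gromov hyperbolic space $X$, cocompactness together with visibility produces loxodromic elements in $\Gamma$ whose attracting fixed points form a dense subset of $X(\infty)$: given any $\eta \in X(\infty)$ and a point $x$ far along a geodesic ray to $\eta$, cocompactness supplies $g \in \Gamma$ with $g \cdot o$ close to $x$, and such a $g$ is loxodromic with attracting fixed point close to $\eta$. For any nonempty closed $\Gamma$-invariant $F \subseteq X(\infty)$ and $\xi \in F$, picking a loxodromic $g \in \Gamma$ whose repelling fixed point differs from $\xi$ gives $g^n \xi \to g^+ \in F$; varying $g$ over loxodromics with attracting fixed points dense in $X(\infty)$ forces $F = X(\infty)$. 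This is the classical minimality argument of Coornaert~\cite[Chapter 8]{coor93}. The resulting contradiction shows $\supp(\nu_o) = \partial \hat{X}$, and then $\supp(\nu_x) = \partial \hat{X}$ for every $x \in X$ follows from the cocycle identity, which makes all $\nu_x$ mutually absolutely continuous with $\nu_o$.
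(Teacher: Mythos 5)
Your proof is correct in substance but follows a genuinely different route from the paper's. Both arguments start from the same identification: Proposition~\ref{prop:buse_bd_visib} together with Theorem~\ref{thm:GHbusemann_structure} makes $\Psi:\partial\hat{X}\rightarrow X(\infty)$ a $\Gamma$-equivariant homeomorphism. The paper then simply pushes $\nu_x$ forward to a $\Gamma$-quasi-conformal density on $X(\infty)$ in the sense of Coornaert and quotes \cite[Corollary 5.2]{coor93}, which says the support of any such density is the limit set $\overline{\Gamma x}\cap X(\infty)$; cocompactness of the action then gives the whole boundary. You instead use only quasi-invariance of the family (the everywhere positive Radon--Nikodym derivatives), countability of $\Gamma$, and minimality of the $\Gamma$-action on $X(\infty)$: a $\nu_o$-null nonempty open set would produce a $\nu_o$-null $\Gamma$-invariant open set whose complement is a proper closed invariant subset, contradicting minimality. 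Your version is more elementary on the measure-theoretic side and proves the a priori stronger statement that \emph{every} $\Gamma$-quasi-invariant probability measure on $\partial\hat{X}$ has full support, but it trades Coornaert's support theorem for the minimality theorem, so neither route is more self-contained in terms of what is cited. Two small remarks: your heuristic for the density of attracting fixed points is not literally correct as stated --- an element $g$ with $g\cdot o$ close to a far point on a ray toward $\eta$ need not be loxodromic, nor need its attracting fixed point lie near $\eta$ --- so that step genuinely rests on the classical result you cite rather than on the sketch; and minimality requires $\Gamma$ to be non-elementary, which is automatic here for $\dim M\geq 2$ since $\partial\hat{X}\cong S_oX$ is infinite and coincides with the limit set by cocompactness (the paper's appeal to Coornaert carries the same implicit assumption).
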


\begin{proof}
Using Theorem~\ref{thm:GHbusemann_structure} and Proposition~\ref{prop:buse_bd_visib}, $\Psi:\partial \hat{X} \rightarrow X(\infty)$ is a homeomorphism and $\Psi_*\nu_x$ is a $\Gamma$-quasi-conformal density on $X(\infty)$ in the sense of Coornaert~\cite[Definition 4.1]{coor93}. Further, Corollary 5.2 in Coorneart's paper~\cite{coor93} shows that the support of any $\Gamma$-quasi-conformal density is the closure of $\Gamma x$ in $X(\infty)$. As $M=X / \Gamma$ is compact, this is all of $X(\infty)$.
\end{proof} 

\begin{proof}[Proof of Proposition~\ref{prop:char}]
Using the proof of Theorem~\ref{thm:param_eq} for $\nu_o$ almost every $\xi$, $\Delta \xi \equiv h_{vol}$. As $\partial \hat{X} = \supp(\nu_o)$, any set of full $\nu_o$ measure is dense in $\partial \hat{X}$. Further the set of functions $\{ f \in C^2(X): f(o)=1, \ \norm{\nabla f} \equiv 1 \ \Delta f \equiv h_{vol} \} \subset C(X)$ is closed, so for all $\xi \in \partial \hat{X}$, $\Delta \xi \equiv h_{vol}$. Let $v \in SX$ then
\begin{align*}
b_v(x) -b_v(o) = \lim_{t \rightarrow \infty} d(x,\gamma_v(t))-d(\gamma_v(t),o)
\end{align*}
and so $b_v-b_v(o) \in \partial \hat{X}$. Then $\Delta (b_v) = \Delta(b_v-b_v(o)) \equiv h_{vol}$ and $(X,g)$ is asymptotically harmonic. 
\end{proof}

\subsection*{Acknowledgements}

I would like thank Ralf Spatzier for many helpful conversations, Jordan Watkins for informing me of his recent generalization of the rank rigidity theorem, and Gerhard Knieper for pointing out an error in an earlier version of this paper. I also thank the National Science Foundation for support through the grant DMS-0602191.

\bibliographystyle{alpha}
\bibliography{geom}

\end{document}